\newcommand\set[1]{\left\{#1\right\}}
\newtheorem{theorem}{Theorem}[section]
\newtheorem{corollary}[theorem]{Corollary}
\newtheorem{proposition}[theorem]{Proposition}
\newtheorem{lemma}[theorem]{Lemma}
\theoremstyle{remark}
\newtheorem{rmk}{Remark}
\newtheorem{definition}[theorem]{Definition}
\def \bC {\mathbb C}
\def \bD {\mathbb D}
\def \bH {\mathbb H}
\def \bK {\mathbb K}
\def \bN {\mathbb N}
\def \bO {\mathbb O}
\def \bR {\mathbb R}
\def \bR {\mathbb R}
\def \bR {\mathbb R}
\def \bZ {\mathbb Z}
\newcommand{\cp}{\mathbb C}
\newcommand{\rl}{\mathbb R}
\newcommand{\qv}{\mathbb H}
\def \cD {\mathcal D}
\def \cE {\mathcal E}
\def \cG {\mathcal G}
\def \cH {\mathcal H}
\def \cL {\mathcal L}
\def \cM {\mathcal M}
\def \cP {\mathcal P}
\def \cR {\mathcal R}
\def \cS {\mathcal S}
\def \cR {\mathcal R}
\def \cR {\mathcal R}
\def \cV {\mathcal V}
\def \fg {\mathfrak g}
\def \fl {\mathfrak l}
\def \fn {\mathfrak n}
\def \fp {\mathfrak p}
\def \fs {\mathfrak s}
\def \fu {\mathfrak u}
\def \fv {\mathfrak v}
\def \fz {\mathfrak z}
\newcommand{\gt}{\mathfrak}
\def \fS {\mathfrak S}
\def \fU {\mathfrak U}
\newcommand{\SL}{{\rm SL}}
\newcommand{\GL}{{\rm GL}}
\newcommand{\Spin}{{\rm Spin}}
\def \Sp {\text{\rm Sp}}
\def \sp {\mathfrak {sp}}
\def \su {\mathfrak {su}}
\def \so {\mathfrak {so}}
\def \SO {\text{\rm SO}}
\def \SU {\text{\rm SU}}
\def \RE {\text{\rm Re}\,}
\def \IM {\text{\rm Im}\,}
\def \al {\alpha}
\def \la {\lambda}
\def \ph {\varphi}
\def \del {\delta}
\def \lan {\langle}
\def \ran {\rangle}
\def \de {\partial}
\def \trans{\,{}^t\!}
\def \half{\frac12}
\def \inv{^{-1}}
\def \deg {\text{\rm deg\,}}
\def \dim {\text{\rm dim\,}}
\def \tr {\text{\rm tr\,}}
\newcommand{\diag}{{\rm diag}\,}
\newcommand{\Lie}{{\rm Lie\,}}
\begin{document}

\title[]
{Nilpotent Gelfand pairs\\ and spherical transforms of Schwartz functions\\ I. Rank-one actions on the centre}

\author[]
{V\'eronique Fischer, Fulvio Ricci, Oksana Yakimova}

\address {Scuola Normale Superiore\\ Piazza dei Cavalieri 7\\ 56126 Pisa\\ Italy}
\email{v.fischer@sns.it, fricci@sns.it}

\address {Centro di Ricerca Matematica E. De Giorgi\\ Piazza dei Cavalieri 3\\ 56126 Pisa\\ Italy}
\email{yakimova@mccme.ru}
\curraddr{Emmy-Noether-Zentrum, Department Mathematik \\
Universit{\"a}t Erlangen-N{\"u}rnberg, Erlangen, Germany}


\subjclass[2010]{Primary: 13A50, 43A32; Secondary:  43A85, 43A90}                         

\keywords{Gelfand pairs, Spherical transform, Schwartz functions, Invariants}

\begin{abstract}
The spectrum of a Gelfand pair of the form $(K\ltimes N,K)$, where $N$ is a nilpotent group, can
be embedded in a Euclidean space $\bR^d$. The identification of the spherical transforms of $K$-invariant Schwartz functions on $N$ with the restrictions to the spectrum of Schwartz functions on $\bR^d$ has been proved already when $N$ is a Heisenberg group and  in the case where $N=N_{3,2}$ is the
free two-step nilpotent Lie group with three generators, with $K={\rm SO}_3$ \cite{ADR1, ADR2, FiRi}. 

We prove that the same identification holds for all pairs in which the $K$-orbits in the centre of $N$ are spheres. In the appendix, we produce bases of $K$-invariant polynomials on the Lie algebra $\fn$ of $N$ for all Gelfand pairs $(K\ltimes N,K)$ in Vinberg's list \cite{V2, Y2}.
\end{abstract}

\maketitle

\makeatletter
\renewcommand\l@subsection{\@tocline{2}{0pt}{3pc}{5pc}{}}
\makeatother

\tableofcontents

\section{Introduction}

\bigskip

Let $N$ be a connected, simply connected, nilpotent Lie group and $K$ a compact group of automorphisms of $N$. 
We are interested in the situation where $(K\ltimes N,K)$ is a Gelfand pair, 
i.e., where the convolution algebra $L^1(N)^K$ of $K$-invariant integrable functions on $N$ is commutative. 

To simplify the notation, 
under these circumstances we will say that $(N,K)$ is a {\it nilpotent Gelfand pair}, n.G.p. in short. 
This notion is equivalent to that of ``commutative nilmanifold'' of \cite[Chapter~13]{W}. 

According to Vinberg's reduction theorem \cite{V1}, the general Gelfand pair $(G,K)$, with $G,K$ Lie groups, is built up over two elementary constituents, a reductive G.p. and a nilpotent G.p. 
The classification of nilpotent Gelfand pairs was obtained in \cite{V2,Y1,Y2}, 
see also \cite[Chapters~13,15]{W}. We point out that in every n.G.p. $N$ is at most step-two \cite{BJR90}. 

The Gelfand spectrum $\Sigma(N,K)$ of $L^1(N)^K$ admits natural homeomorphic embeddings in Euclidean spaces \cite{FeRu}, 
each associated with a finite $d$-tuple $\cD=(D_1,\dots,D_d)$ of self-adjoint generators of the algebra $\bD(N)^K$ of left- and 
$K$-invariant differential operators on $N$. 
The embedded image $\Sigma_\cD\subset\bR^d$ of $\Sigma(N,K)$ is obtained by assigning to each bounded spherical function $\ph$ the $d$-tuple $\big(\mu_1(\ph),\dots,\mu_d(\ph)\big)$ of eigenvalues of $\ph$ as an eigenfunction of $D_1,\dots,D_d$.

It has been proved in several cases that the Gelfand transform establishes the isomorphism 
\begin{equation}\label{conjecture}
\cS(N)^K\cong \cS(\Sigma_\cD)\overset{\text {def}}=\cS(\bR^d)/\{f:f_{|_{\Sigma_\cD}}=0\}\ ,
\end{equation}
and that this condition is independent of the generating set $\cD$ as above, cf. \cite{ADR2}, Section~3. More precisely, \eqref{conjecture} consists of two separate statements:
\begin{equation}\label{multiplier}
\begin{aligned}
&\text{\it Given $u\in\cS(\bR^d)$, there is a unique $F\in\cS(N)^K$ such that $u=\cG F$ on $\Sigma_\cD$,}\\
&\text{\it and $F$ depends continuously on $u$.}
\end{aligned}
\end{equation}
\begin{equation}
\begin{aligned}\label{control}
&\text{\it Given $F\in\cS(N)^K$ and $p\in\bN$, there are $q=q(p)\in\bN$ and $u_p\in\cS(\bR^d)$}\\
&\text{\it such that $\cG F=u_p$ on $\Sigma_\cD$ and $\|u_p\|_{(p)}\le C_p\|F\|_{(q)}$.}
\end{aligned}
\end{equation}

Since \eqref{multiplier} holds on any n.G.p., cf. \cite{ADR2, FiRi}, the main problem is to prove \eqref{control}.
This has been proved in \cite{ADR1,ADR2} for any pair $(N,K)$ in which $N$ is the $(2n+1)$-dimensional Heisenberg group $H_n$, 
and in \cite{FiRi} for the case where $N$ is the free two-step nilpotent group with 
three generators and $K=\SO_3$ or $O_3$.

Condition \eqref{conjecture} has an additional interpretation from the point of view of functional calculus on the operators $D_j$. 
The set $\Sigma_\cD$ is the joint $L^2$-spectrum of $D_1,\dots, D_d$, 
and, given a bounded Borel measurable function $m$ on $\Sigma_\cD$, 
the inverse Gelfand transform of $m$ is the convolution kernel $K_m$ of the operator $m(D_1,\dots,D_d)\in\cL\big(L^2(N)\big)$. 

Then \eqref{conjecture} is equivalent to saying that $K_m\in\cS(N)^K$ if and only if $m\in\cS(\Sigma_\cD)$. 
This connection with functional calculus is more than just a remark, 
because it gives a quick proof of \eqref{multiplier}, which in turn intervenes in the proof of \eqref{control}.

The problem of characterising the Gelfand transforms of $K$-invariant Schwartz functions on the Heisenberg group has been addressed, and solved, by other authors in the past, cf. \cite{G} and, in greater generality, \cite{BJR98}. We find that the conditions given, though explicit, are quite technical, as they are expressed in terms of the different parameters identifying bounded spherical functions, and do not suggest a unifying idea for more general cases.

In this paper we present a further step in a general attempt to generalise \eqref{conjecture} to other nilpotent Gelfand pairs. 

Let $\fz$ be the centre of the Lie algebra $\fn$ of $N$. Then the action of $K$ on $\fn$ restricts to $\fz$.
We say that $\fz$ is {\it rank-one} if a generic  $K$-orbit in $\fz$ is of codimension one.

Since the action of $K$ on $\fn$ also restricts to the derived algebra $[\fn,\fn]\subset\fz$, our assumption implies that $[\fn,\fn]=\fz$, i.e., $\fn$ does not contain abelian factors. Let  $\fv$ denote a $K$-invariant complement of $[\fn,\fn]$ in $\fn$. Then $[\fv,\fv]=\fz$. We will also assume that $K$ acts irreducibly on $\fv$.

Our main result is the following.

\begin{theorem}\label{main}
Assume that $\fz$ is rank-one and $\fv$ is irreducible. Then \eqref{multiplier}, \eqref{control}, and hence \eqref{conjecture} hold.
\end{theorem}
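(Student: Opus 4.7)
The plan is to adapt the Laguerre/Hermite-expansion strategy of \cite{ADR1, ADR2, FiRi} to the present setting, using the rank-one hypothesis to collapse the central variable to a single radial parameter $r=|\lambda|$. Since \eqref{multiplier} is known on every nilpotent Gelfand pair, the real task is to prove the Schwartz control \eqref{control}.

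First I would choose the generators $\cD$ to fit the geometry. The rank-one assumption implies that the $K$-invariant polynomials on $\fz$ are generated by $|z|^2$ (or by the central generator itself, if $K$ acts trivially on a one-dimensional $\fz$), so I would take one generator $D_0\in\bD(N)^K$ that detects $r=|\lambda|$ on a type-I bounded spherical function $\ph$ with central character $\lambda\in\fz^*$, and complete $\cD$ by lifting, via symmetrization, a generating set of $\bR[\fv]^{K_{e_0}}$, where $e_0\in\fz^*$ is a fixed unit vector. For $F\in\cS(N)^K$, partial Fourier transform in $\fz$ produces a $K$-invariant Schwartz function $\tilde F(v,\lambda)$; the $K$-transitivity on spheres of $\fz^*$ then allows one to encode $\tilde F(v,\lambda)=g(v,r)$ with $g(\cdot,r)$ being $K_{e_0}$-invariant in $v$.

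Next I would exploit that, for each $r>0$, pairing the bracket $[\cdot,\cdot]:\fv\times\fv\to\fz$ with $e_0$ defines a non-degenerate $K_{e_0}$-invariant symplectic form on $\fv$ (non-degeneracy comes from $[\fv,\fv]=\fz$ together with the irreducibility of $K$ on $\fv$). The corresponding Schr\"odinger/Bargmann representation $\pi_{re_0}$ of $N$ realizes the infinite-dimensional irreducibles with central character $re_0$, and its $K_{e_0}$-isotypic decomposition labels the bounded type-I spherical functions $\ph_{r,\tau}$ with $|\lambda|=r$ by a countable index $\tau$. The Weyl--Wigner calculus identifies $\cG F(\ph_{r,\tau})$ with a matrix coefficient of $\pi_{re_0}(F)$ against a $\tau$-weight vector, computable as a Laguerre-type coefficient of $g(\cdot,r)$. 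The joint Schwartz regularity of $g$ then forces these coefficients to be Schwartz in $r$ and rapidly decaying in $\tau$, with seminorms controlled by those of $F$; combined with the polynomial dependence of $(\mu_1,\dots,\mu_{d-1})$ on $(r,\tau)$, which is essentially the content of the appendix, this assembles a candidate Schwartz function $u$ on $\bR^d$ whose restriction to $\Sigma_\cD$ equals $\cG F$.

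The main obstacle, as in \cite{ADR2, FiRi}, will be the transition at $r=0$, where type-I spherical functions degenerate to type-II ones (constant in the $\fz$-direction) and several $\mu_j$-parameters coalesce onto lower-dimensional boundary strata of $\Sigma_\cD$. Controlling the extension across these strata requires a Whitney-type argument adapted to the semi-algebraic structure of $\Sigma_\cD$, together with the fact that matrix coefficients of $\pi_{re_0}$ against weight vectors depend polynomially on $r$. I expect this to be the step in which the rank-one hypothesis plays its essential role, by ensuring that only one continuous parameter has to be tracked across the transition.
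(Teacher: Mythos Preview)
Your outline has the right overall architecture --- reduce via the rank-one action on $\fz$ to a single radial parameter, invoke Heisenberg-type analysis, and handle the boundary at $r=0$ --- but there are two concrete gaps that would block the argument as written.

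First, your claim that the form $\omega_{e_0}(v,w)=\lan[v,w],e_0\ran$ is non-degenerate on $\fv$ is false in general: for the pairs $(\SO_3,\bR^3\oplus\so_3)$, $(\SU_3,\bC^3\oplus\Lambda^2\bC^3)$ and $({\rm G}_2,\bR^7\oplus\bR^7)$ (the third block of Table~\ref{list}) the radical of $\omega_{e_0}$ has dimension one or two, so the quotient $N'$ is a product $H_m\times\bR^r$ rather than a pure Heisenberg group. Irreducibility of $\fv$ under $K$ together with $[\fv,\fv]=\fz$ does not force non-degeneracy. Second, your proposed choice of generators --- symmetrizations of $K_{e_0}$-invariants on $\fv$ --- does not produce elements of $\bD(N)^K$; the operators in $\cD$ must come from $K$-invariant polynomials on all of $\fn$, and for the pairs in blocks~2 and~3 any generating set of $\bR[\fn]^K$ necessarily contains \emph{mixed} polynomials depending on both $v$ and $z$ (cf.\ Table~\ref{invariants}). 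These mixed invariants are exactly what make the $r\to0$ transition delicate: on $\Sigma_\cD$ the corresponding coordinates $\xi_\cM$ are tied to $\xi_\Delta$ by $\xi_\cM=0$ when $\xi_\Delta=0$, so a naive Whitney extension will not match the jet structure.

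The paper's route is structurally different from a direct Laguerre analysis. It first establishes a Geller-type decomposition with remainder (Proposition~\ref{derivatives}): $F=\sum_{|j|+2k\le n}\frac1{j!k!}\cM^j\Delta^k g_{jk}(\cL)\delta_0 + R_n$ with $R_n$ a sum of $(n{+}1)$-fold $z$-derivatives. The core of this is Lemma~\ref{vanishing}, which shows that a $K$-invariant Schwartz function vanishing to order $n$ in $\zeta$ factors through powers of the mixed invariants $\rho_{\fv,\fz}$ and $|\zeta|^2$; this is where the specific algebraic form of the mixed generators is used (and where line~11 requires a separate complex argument). One then subtracts a jet $\psi\ph$ from $F$, so that the remainder $G$ has spherical transform vanishing of infinite order on $\Sigma_\cD^{\rm sing}$, passes to $N'$ via the Radon transform $\cR$, applies the known Schwartz-extension result for Heisenberg pairs from \cite{ADR2}, and pulls the extension back through the explicit change of variables $\Theta$ of Lemma~\ref{Sigmas}. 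Your sketch would need, at minimum, an analogue of Lemma~\ref{vanishing} to make the Whitney step precise in the presence of mixed invariants.
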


We disregard the cases where $N$ is abelian, already discussed in \cite{ADR2}, and, for the same reason, the cases where $\fz$ is one-dimensional (i.e., $N$ is the Heisenberg group).

The list of all n.G.p. with $\fz$ rank-one and $\fv$ irreducible can be derived directly from Vinberg's list, cf. \cite{V2,Y2}. In order to present it in a simplified form, we first state a preliminary ``$K$-reduction theorem'', which extends Section 8 of \cite{ADR2} and which will be proved in Section \ref{reduction}.

\begin{theorem}\label{normal}
Assume that $(N,K_1)$ and $(N,K_2)$ are both nilpotent Gelfand pairs, and that $K_1\triangleleft K_2$. 
If \eqref{conjecture} holds for $(N,K_1)$, it also holds for $(N,K_2)$.
\end{theorem}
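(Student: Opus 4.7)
Since \eqref{multiplier} holds for every nilpotent Gelfand pair, only the transfer of \eqref{control} from $(N,K_1)$ to $(N,K_2)$ needs attention. My plan proceeds in three stages.

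First, I would set up the algebraic and geometric dictionary. Write $\Gamma=K_2/K_1$, a compact Lie group. Normality of $K_1$ in $K_2$ implies that $\Gamma$ acts on $\bD(N)^{K_1}$ with $(\bD(N)^{K_1})^{\Gamma}=\bD(N)^{K_2}$. After averaging an initial choice of generators, one obtains generators $D_1,\dots,D_{d_1}$ of $\bD(N)^{K_1}$ whose linear span is $\Gamma$-stable, so that $\Gamma$ acts linearly on $\bR^{d_1}$ via the dual action. Hilbert finiteness then yields $\Gamma$-invariant polynomials $P_1,\dots,P_{d_2}$ in the $D_i$ whose images $E_j=P_j(D_1,\dots,D_{d_1})$ generate $\bD(N)^{K_2}$. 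Taking $\cD_2=(E_1,\dots,E_{d_2})$, the associated $\Gamma$-invariant polynomial map
\[
\pi=(P_1,\dots,P_{d_2}):\bR^{d_1}\longrightarrow \bR^{d_2}
\]
sends $\Sigma_{\cD_1}$ onto $\Sigma_{\cD_2}$, because on each bounded $K_1$-spherical function $\psi_x$ one has $E_j\psi_x=P_j(x)\psi_x$.

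Second, I would establish the compatibility $\cG_1 F=\cG_2 F\circ\pi$ on $\Sigma_{\cD_1}$ for every $F\in\cS(N)^{K_2}$. This follows because the bounded $K_2$-spherical function attached to $y=\pi(x)\in\Sigma_{\cD_2}$ is, up to normalisation, the $\Gamma$-average of the $K_1$-spherical functions $\psi_{x'}$ along the $\Gamma$-orbit of $x$ in $\Sigma_{\cD_1}$, while the $K_2$-invariance of $F$ makes $\cG_1 F$ automatically $\Gamma$-invariant on $\Sigma_{\cD_1}$.

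Third comes the extension argument. Given $F\in\cS(N)^{K_2}\subset\cS(N)^{K_1}$, apply \eqref{control} for $(N,K_1)$ to produce $v\in\cS(\bR^{d_1})$ with $v|_{\Sigma_{\cD_1}}=\cG_1 F$ and $\|v\|_{(p')}\le C\|F\|_{(q)}$. Average $v$ over $\Gamma$ (acting continuously and linearly on $\cS(\bR^{d_1})$) to get $v^{\#}\in\cS(\bR^{d_1})^{\Gamma}$ satisfying the same restriction identity and norm bound. Then invoke the Schwartz-class analogue of G.~Schwarz's theorem on invariants: every $\Gamma$-invariant Schwartz function on $\bR^{d_1}$ factors continuously as $u\circ\pi$ with $u\in\cS(\bR^{d_2})$. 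Applying this to $v^{\#}$ produces $u_p\in\cS(\bR^{d_2})$ with $v^{\#}=u_p\circ\pi$ and $\|u_p\|_{(p)}\le C_p\|F\|_{(q)}$; restricting to $\Sigma_{\cD_2}=\pi(\Sigma_{\cD_1})$ and using the compatibility identity yields $u_p|_{\Sigma_{\cD_2}}=\cG_2 F$, which is \eqref{control} for $(N,K_2)$.

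The main anticipated obstacle is the Schwartz version of Schwarz's theorem in the form required here, namely the existence of a continuous linear right inverse for the pullback $\pi^{*}:\cS(\bR^{d_2})\to\cS(\bR^{d_1})^{\Gamma}$ associated with the quotient map by the compact linear $\Gamma$-action. This is a Schwartz-decay strengthening of Mather's extension of Schwarz's theorem and, in the form relevant here, should already be available from the machinery of \cite{ADR2}; once granted, everything else in the argument is formal invariant theory and functional analysis.
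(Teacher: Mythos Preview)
Your proposal is correct and follows essentially the same approach as the paper's proof: both set up the linear action of $W=K_2/K_1$ on a suitable space of generators of $\bD(N)^{K_1}$, pass to $\bD(N)^{K_2}$ via a Hilbert basis $\rho$ for the $W$-invariants, establish the compatibility $\cG_1F=\cG_2F\circ\rho$ on $\Sigma_{\cD_1}$, extend $\cG_1F$ using \eqref{control} for $(N,K_1)$, average over $W$, and then descend to $\bR^{d_2}$ via the Schwarz--Mather theorem (the paper's Theorem~\ref{Schwarz-Mather}). The only point to tighten is the order of norm choices in your final paragraph: given $p$, first fix $r=r(p)$ from the continuity of the Schwarz--Mather operator $\cE_W$, and only then invoke \eqref{control} for $(N,K_1)$ at level $r$ to obtain the extension $v$ and the index $q$---exactly as the paper does.
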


We can then limit our attention to the n.G.p. with $\fz$ rank-one, $\fv$ irreducible and $K$ minimal, in the sense that $(N,K_0)$ is not a n.G.p. for any normal subgroup $K_0$ of $K$. The list of such pairs is in Table  \ref{list}. 

Like in the previous proofs of \eqref{conjecture}, a crucial step is the proof of a ``Geller-type formula'', which provides the {\it a-priori} Taylor expansion for the desired Schwartz extension of $\cG F$ on the singular part of $\Sigma_\cD$ (as defined in Subsection \ref{generalities}). 

With respect to the Heisenberg group case, a serious technical complication occurs for the pairs listed in the second and third block of Table \ref{list} (in fact a simpler proof, based on a generalisation of Theorem \ref{normal}, is possible for the pairs in the first block). This complication is due to the presence, in any set of generators of the algebra of $K$-invariant polynomials on $\fn$, of ``mixed polynomials'', i.e. polynomials depending on both the $\fv$- and the $\fz$-variables. By symmetrisation, this has the effect that ``mixed differential operators'' are present in any choice of $\cD$.

Mixed invariants were already present in the case treated in \cite{FiRi}, i.e., line 10 of Table \ref{list}. The method used there, based on an ``exact Geller-type formula'' (Proposition 5.2 and Corollary 5.3 in \cite{FiRi}) and a radialisation argument, turns out to be not so easily extendable to the other pairs (even though the validity of such a formula is implied {\it a posteriori} by Theorem \ref{main}). 

We prove instead a weaker ``Geller-type formula with remainder term'' (Proposition \ref{derivatives})  which is easier to prove and good enough for our purposes. The radialisation argument is replaced by the extension process, modeled on \cite{ADR1, ADR2} and presented in section \ref{proof}.

\bigskip

As we have mentioned already, 
our approach requires minimal information on the spherical functions involved in each case and on the parameters they depend on. 
Reductions to  quotient groups that are Heisenberg or $H$-type and Radon transform arguments are sufficient to provide enough information when needed. 

However, we require full knowledge of which pairs are involved and how Hilbert bases (i.e., systems of generators for the algebra of $K$-invariant polynomials on $\fn$) look like, cf. Table \ref{invariants}.

The proofs required to obtain the Hilbert bases in Table \ref{invariants} are given in the Appendix. For reasons of completeness, and in view of future extensions, Hilbert bases are given not only for the pairs  considered in this paper, but for all pairs in Vinberg's list (i.e. with $\fv$ irreducible, but no restrictions on $[\fn,\fn]$).
Surprisingly, in all  cases the algebra $\rl[\gt n]^K$ of polynomial 
$K$-invariants on $\gt n$ admits an algebraically independent system of generators.

\vskip1cm

\section{Preliminary facts}\label{preliminary}

\bigskip

In this section we give a quick introduction to nilpotent Gelfand pairs and to the tools used in the sequel. We refer to \cite{W} for uncommented statements concerning the general theory of Gelfand pairs.

\bigskip

\subsection{Spherical functions and Gelfand transform}\label{subs-spherical}\hfill

\bigskip

If $K$ is a compact subgroup of ${\rm Aut}(N)$, the space $L^1(N)^K$ of $K$-invariant integrable functions on $N$ is an algebra under convolution.

Assume here that $L^1(N)^K$ is commutative.
This implies that the nilpotent Lie group $N$ is of step at most two
\cite{BJR90}.
The $K$-{\it spherical functions} on $N$ are the smooth functions $\ph$ on $N$ such that
\begin{enumerate}
\item[(i)] $\ph(0)=1$;
\item[(ii)] $\ph$ is an eigenfunction of all operators in $\bD(N)^K$.
\end{enumerate}

For $\ph$ spherical and $D\in\bD(N)^K$, we call $\mu_D(\ph)$ the eigenvalue of $\ph$ when acted upon by $D$.

The multiplicative linear functionals on $L^1(N)^K$ are given by integration against the bounded spherical functions. So the Gelfand spectrum $\Sigma(N,K)$ of $L^1(N)^K$ can be identified with the set of bounded spherical functions. Under this identification, the Gelfand topology on $\Sigma(N,K)$ coincides with the compact-open topology on the set of bounded spherical functions.

The {\it Gelfand transform} of a function $F\in L^1(N)^K$ is the function $\cG F\in C_0\big(\Sigma(N,K)\big)$ given by
$$
\cG F(\ph)=\int_N F(x)\ph(x\inv)\,dx\ .
$$

\medskip

As a consequence of the symmetry of $L^1(N)$ \cite{Lu}, the bounded spherical functions are of positive type. An integral formula expressing them in terms of the irreducible unitary representations of $N$ is given in \cite{BJR90}. 

Given an irreducible unitary representation $\pi$ of $N$ and $k\in K$, let $\pi^k$ be the representation given by $\pi^k(n)=\pi(k\inv n)$ for $n\in N$. Denote by $K_\pi$ the subgroup of those elements $k\in K$ such that $\pi^k\sim \pi$. 

The unitary intertwinings operators give a projective unitary representation $\sigma_\pi$ of $K_\pi$ on the representation space $\cH_\pi$ of $\pi$. Then $\cH_\pi$ decomposes into irreducible $K_\pi$-invariant subspaces without multiplicities (this condition, for generic $\pi$, is in fact equivalent to saying that $(N,K)$ is a n.G.p.). Let
$$
\cH_\pi=\sum_{\sigma\in \widehat{K_\pi}} V_{\pi,\sigma}
$$
be this decomposition, with $V_{\pi,\sigma}$ possibly trivial.

\begin{theorem}\label{BJR}
Assume that $V_{\pi,\sigma}$ is nontrivial. 
Given a unit vector $v\in V_{\pi,\sigma}$, set
\begin{equation}\label{spherical}
\ph_{\pi,\sigma}(x)=\int_K\lan\pi(kx)v,v\ran\,dk\ .
\end{equation}

Then
\begin{itemize}
\item[(i)] $\ph_{\pi,\sigma}$  is $K$-spherical, bounded, and does not depend on $v$;
\item[(ii)] if $\pi'\sim\pi^k$ for some $k\in K$, then $\ph_{\pi,\sigma}=\ph_{\pi',\sigma}$ for every $\sigma\in\widehat{K_\pi}$, modulo the natural identification of $\widehat{K_\pi}$ with $\widehat{K_{\pi'}}$;
\item[(iii)] given any bounded $K$-spherical function $\ph$, there exist $\pi\in\widehat N$, unique modulo the action of $K$, and a unique $\sigma\in\widehat{K_\pi}$ with $V_{\pi,\sigma}$ nontrivial, such that $\ph=\ph_{\pi,\sigma}$.
\end{itemize}
\end{theorem}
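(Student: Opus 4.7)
Following the general strategy of Benson--Jenkins--Ratcliff \cite{BJR90}, I would view $\ph_{\pi,\sigma}$ as the $K$-average of the matrix coefficient $\psi_v(x)=\lan\pi(x)v,v\ran$ and systematically exploit Schur's lemma on the $\sigma_\pi(K_\pi)$-module $V_{\pi,\sigma}$, which is irreducible precisely by the multiplicity-free hypothesis characterising n.G.p.'s.

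\textbf{Proof of (i).} The easy properties are immediate: $\ph_{\pi,\sigma}$ is smooth, bounded by $\|v\|^2=1$, $K$-invariant (by the change of variable $k\mapsto kk_0\inv$), and satisfies $\ph_{\pi,\sigma}(0)=1$. For the eigenfunction property, identify $D\in\bD(N)^K$ with a $K$-invariant element $\tilde D\in\cU(\fn)$. For $k\in K_\pi$, differentiating $\pi^k(\cdot)=\sigma_\pi(k)\pi(\cdot)\sigma_\pi(k)\inv$ gives $d\pi^k(\tilde D)=\sigma_\pi(k)\,d\pi(\tilde D)\,\sigma_\pi(k)\inv$; combined with the identity $d\pi^k(\tilde D)=d\pi(\tilde D)$ (which is $K$-invariance of $\tilde D$ read through $d\pi$), this shows that $d\pi(\tilde D)$ commutes with all $\sigma_\pi(k)$, $k\in K_\pi$. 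Multiplicity-freeness and Schur's lemma then force $d\pi(\tilde D)|_{V_{\pi,\sigma}}$ to be a scalar $\mu_{\pi,\sigma}(D)$, and since $\tilde D$ is $K$-invariant and $N$-left-invariant one computes, for $v\in V_{\pi,\sigma}$,
\[
D_x\lan\pi(kx)v,v\ran=\lan\pi^k(x)\,d\pi(\tilde D)v,v\ran=\mu_{\pi,\sigma}(D)\lan\pi(kx)v,v\ran.
\]
Integration over $K$ yields $D\ph_{\pi,\sigma}=\mu_{\pi,\sigma}(D)\ph_{\pi,\sigma}$. To see that $\ph^v$ is independent of the choice of unit vector, I would compute the associated character: for $F\in L^1(N)^K$, the $K$-invariance of $F$ and of Haar measure absorb the $K$-average into $\pi(F)$, giving $\int_N F(x)\ph^v(x\inv)\,dx=\lan v,\pi(F)v\ran$. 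The same Schur argument applied to $\pi(F)$ (which likewise commutes with $\sigma_\pi(K_\pi)$) reduces this to a scalar $\lambda_F$ independent of the unit vector $v\in V_{\pi,\sigma}$. Since bounded $K$-spherical functions are in bijection with the characters of $L^1(N)^K$, this forces $\ph^v=\ph^{v'}$.

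\textbf{(ii) and (iii).} Item (ii) is a change of variable: if $U\colon\cH_\pi\to\cH_{\pi'}$ realises $\pi'\sim\pi^{k_0}$ and identifies $V_{\pi,\sigma}$ with $V_{\pi',\sigma}$, then $\lan\pi'(kx)v',v'\ran=\lan\pi(k_0kx)v,v\ran$ for $v'=Uv$, and substituting $k\mapsto k_0\inv k$ matches the two averages over $K$. For item (iii), the symmetry of $L^1(N)$ \cite{Lu} makes every bounded $K$-spherical $\ph$ positive definite; the GNS construction then produces a cyclic unitary representation $\rho$ of $N$, and a direct-integral decomposition of $\rho$ combined with the $K$-invariance of $\ph$ concentrates the spectral support on a single $K$-orbit $[\pi]\subset\widehat N$ (a superposition over several orbits would contradict the extremality of $\ph$ in the convex set of normalised $K$-invariant positive-definite functions). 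Decomposing the cyclic vector under $K_\pi$ in the chosen fibre $\cH_\pi$ and using orthogonality of the $V_{\pi,\sigma}$ finally singles out a unique $\sigma$.

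\textbf{Main obstacle.} I expect step (iii) to be the delicate one: one must pass from the abstract direct-integral / Choquet decomposition to the explicit formula \eqref{spherical}, keeping careful track of the projective cocycle defining $\sigma_\pi$ and of the identification of $K_\pi$ across the equivalence $\pi'\sim\pi^{k_0}$ used in (ii). Parts (i) and (ii) are, by comparison, formal manipulations once Schur's lemma in the multiplicity-free setting is installed.
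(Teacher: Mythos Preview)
The paper does not give its own proof of this theorem: it is quoted from \cite{BJR90}, with pointers to \cite[Section~14.5]{W} and \cite{BR08} for further context. Your sketch follows the standard Benson--Jenkins--Ratcliff line of argument and is essentially correct for parts (i) and (ii); part (iii) is, as you note, the substantive step, and your outline via positive-definiteness, GNS, and concentration on a single $K$-orbit in $\widehat N$ is the right shape, though making the passage from ``supported on one orbit'' to the explicit formula \eqref{spherical} rigorous requires some care (e.g.\ ensuring the cyclic vector lands in a single $V_{\pi,\sigma}$, and handling the projective nature of $\sigma_\pi$). One minor technical point you glide over in (i): to apply $d\pi(\tilde D)$ to $v\in V_{\pi,\sigma}$ you need $V_{\pi,\sigma}$ to consist of smooth vectors, which follows from its finite-dimensionality but deserves a word.
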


See \cite[Section~14.5]{W} for a description of this result in terms of spherical representations of the semi-direct product $K\ltimes N$ and \cite{BR08} for their correspondence with coadjoint orbits of $K\ltimes N$.

\medskip

\subsection{Embeddings of $\Sigma(N,K)$ in Euclidean spaces}\hfill
\label{embedding_in_euclidean_spaces}

\bigskip

Let $\fn$ be the Lie algebra of $N$, $\fS(\fn)$ its symmetric algebra, $\fU(\fn)$ the universal enveloping algebra of $\gt n$. The identification of $\fU(\fn)$ with the algebra $\bD(N)$ of left-invariant differential operators on $N$ will be assumed throughout. 

We will use a modified symmetrisation operator
$\la':\fS(\fn)\longrightarrow \fU(\fn)$, related to the standard symmetrisation $\la$ by the formula
\begin{equation}\label{symm}
\la'(x^\al)=i^{-|\al|}\la(x^\al)\ ,
\end{equation}
for every monomial $x^\al\in \fS(\fn)$. This modification has the effect that the differential operator $D=\la'(P)$ is formally self-adjoint, i.e. 
$$
\int_NDf(x)\overline{g(x)}\,dx=\int_Nf(x)\overline{Dg(x)}\,dx\ ,\qquad \forall\,f,g\in C^\infty_c(N)\ ,
$$ 
if and only if $P$ is real-valued. 

Since $\la'$ is $K$-equivariant, it establishes a linear isomorphism between the $K$-invariant subalgebras $\fS(\fn)^K$ and $\bD(N)^K$. Even though both subalgebras are commutative, $\la'$ is not an algebra isomorphism. Nevertheless, if $\cP\subset \fS(\fn)^K$ is a generating set, the same is true for $\la'(\cP)$ in $\bD(N)^K$. Then Hilbert's basis theorem implies the existence of finite generating sets of $\bD(N)^K$.

In the sequel we will need to consider finite {\it ordered sequences}  $(D_1,\dots,D_d)$ of elements of $\bD(N)^K$ such that the $D_j$ form a generating set. For this reason, we call {\it Hilbert basis} a finite sequence $\rho=(\rho_1,\dots,\rho_d)$ of real-valued $K$-invariant polynomials which generate $\fS(\fn)^K$.
Clearly $\rho$ defines a map
$$
\rho:\fn\longrightarrow \bR^d\ ,
$$
called the {\it Hilbert map}.

To any Hilbert basis $\rho=(\rho_1,\dots,\rho_d)$ we associate the $d$-tuple $\cD=\big(D,\dots,D_d\big)$ of generators of $\bD(N)^K$, where $D_j=\la'(\rho_j)$.

We denote by $\fv$
a $K$-invariant complement of $[\fn,\fn]$ in $\fn$.
Hilbert's basis Theorem also implies the existence of a bi-homogeneous Hilbert basis of $\fn$,
i.e., homogeneous in each of the variables $v\in \fv$ and $z\in [\fn,\fn]$. 
Note that if a polynomial $P$ on $\fn$ is bi-homogeneous,
then its symmetrisation $\lambda'(P)$ is a homogeneous differential operator
where the homogeneity of operators on $N$ is referred to the automorphic dilations $(v,z)\mapsto (rv,r^2z)$ of $N$.
Furthermore, if $P$ is of degree $\gamma'$ and $\gamma''$ in $v$ and $z$ respectively,
then $\lambda'(P)$ is of degree $\gamma=\gamma'+2\gamma''$.

Thus we always have the possibility of constructing a Hilbert basis organised as follows: 
we start with a Hilbert basis $\rho_{\fv}$ of $K$-invariant polynomials on $\fv$
containing the square norm $|v|^2$ on $\fv$, 
and a Hilbert basis $\rho_\fz$ of $K$-invariant polynomials on $[\fn,\fn]$;
then we complete it, if necessary, with an additional set $\rho_{\fv,[\fn,\fn]}$ of polynomials depending on variables from both $\fv$ and $[\fn,\fn]$. 
By symmetristion, this leads to a family of homogeneous generators of $\bD(N)^K$ containing the hypoelliptic operator $\lambda'(|v|^2)$.

Let $P\in\fS(\fn)^K$ be a real-valued polynomial, and $D=\la'(P)$. If $\pi\in\widehat N$, $d\pi(D)$ operates on each $K_\pi$-invariant subspace $V_{\pi,\sigma}$ as $\mu_D(\ph_{\pi,\sigma})I$. Since $V_{\pi,\sigma}$ is finite-dimensional and $d\pi(D)$ is symmetric, this implies that $\mu_D(\ph_{\pi,\sigma})$ is real.

The following theorem is contained in \cite{FeRu}.

\begin{theorem}\label{embedding}
Let $\cD=(D_1,\dots,D_d)$ a generating $d$-tuple of formally self-adjoint elements of $\bD(N)^K$, and let
\begin{equation}\label{SigmaD}
\Sigma_\cD=\big\{\big(\mu_{D_1}(\ph),\dots,\mu_{D_d}(\ph)\big):\ph\in\Sigma(N,K)\big\}\subset\bR^d\ .
\end{equation}

Then $\Sigma_\cD$ is closed in $\bR^d$ and the map
\begin{equation}\label{embeddingformula}
\Phi_\cD:\ph\longmapsto \big(\mu_{D_1}(\ph),\dots,\mu_{D_d}(\ph)\big)
\end{equation}
is 1-to-1 and a homeomorphism of $\Sigma(N,K)$ onto $\Sigma_\cD$.
 \end{theorem}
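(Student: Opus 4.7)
The plan is to establish four properties of $\Phi_\cD$: realness of its components, injectivity, continuity, and properness (from which both closedness of $\Sigma_\cD$ and continuity of $\Phi_\cD^{-1}$ will follow). Realness is already implicit in the preceding discussion: since each $D_j=\la'(\rho_j)$ is formally self-adjoint and acts as the scalar $\mu_{D_j}(\ph_{\pi,\sigma})$ on the finite-dimensional space $V_{\pi,\sigma}$, this scalar is real.

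For injectivity I would use the fact that $D\mapsto\mu_D(\ph)$ is a unital algebra homomorphism $\bD(N)^K\to\bR$: indeed, if $D_i\ph=\mu_i\ph$, then $(D_1D_2)\ph=\mu_1\mu_2\ph$ and $(D_1+D_2)\ph=(\mu_1+\mu_2)\ph$. Since $D_1,\dots,D_d$ generate $\bD(N)^K$ as an algebra, the values $\mu_{D_j}(\ph)$ determine the character $\mu_\bullet(\ph)$ on all of $\bD(N)^K$. Two bounded spherical functions sharing this character must coincide: by Theorem~\ref{BJR} each is of the form $\ph_{\pi,\sigma}$, and $\mu_D(\ph_{\pi,\sigma})$ is the scalar by which $d\pi(D)$ acts on $V_{\pi,\sigma}$; the full joint spectrum of $\bD(N)^K$ on $V_{\pi,\sigma}$ separates the $K$-orbit of $(\pi,\sigma)$, hence the spherical function itself.

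For continuity and properness I would exploit subelliptic regularity. By the canonical bi-homogeneous construction of Hilbert bases just discussed, we may assume that the hypoelliptic operator $\la'(|v|^2)$ is one of the $D_j$; standard subelliptic estimates on the two-step group $N$ then yield uniform $C^\infty_{\rm loc}$ bounds on any family of joint eigenfunctions whose eigenvalues lie in a bounded subset of $\bR^d$ and whose sup-norms are uniformly bounded. This Montel-type compactness has two consequences. First, if $\ph^{(n)}\to\ph$ in the compact-open (equivalently, Gelfand) topology, the uniform bounds upgrade the convergence to $C^\infty_{\rm loc}$, so $\mu_{D_j}(\ph^{(n)})=D_j\ph^{(n)}(0)\to D_j\ph(0)=\mu_{D_j}(\ph)$, giving continuity of $\Phi_\cD$. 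Second, if $\Phi_\cD(\ph^{(n)})\to\la\in\bR^d$, a subsequence converges in $C^\infty_{\rm loc}$ to some smooth $\ph$ with $\ph(0)=1$, $|\ph|\le 1$, and $D_j\ph=\la_j\ph$; this limit is of positive type (being a limit of positive-type functions), and the multiplicativity argument together with the generating property extends the eigenfunction property to all of $\bD(N)^K$, so $\ph$ is a bounded $K$-spherical function with $\Phi_\cD(\ph)=\la$. This simultaneously yields closedness of $\Sigma_\cD$ and continuity of $\Phi_\cD^{-1}$.

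The main obstacle is the uniform $C^\infty_{\rm loc}$ bound for joint eigenfunctions: one must control all left-invariant derivatives of $\ph$ on compacta by the sup-norm of $\ph$ and the eigenvalue parameters, with constants depending continuously on the latter. On a two-step group this follows from subelliptic estimates for the hypoelliptic sublaplacian $\la'(|v|^2)$, whose symbol together with its iterated brackets spans $T^*N$; but some care is needed to ensure that these constants remain uniform as $\la$ varies over a compact subset of $\bR^d$.
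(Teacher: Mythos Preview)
The paper does not prove this theorem; it simply records it as ``contained in \cite{FeRu}'' and moves on. So there is no in-paper argument to compare against, and your sketch is essentially the standard route used in that reference: injectivity because the $\mu_{D_j}$ determine the full character of $\bD(N)^K$, continuity of eigenvalues under compact-open convergence, and properness via a subelliptic compactness argument.

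Two small points are worth tightening. First, you write ``we may assume that the hypoelliptic operator $\la'(|v|^2)$ is one of the $D_j$''; the theorem is stated for an arbitrary generating tuple, so this is not literally available. What you actually need (and have) is that $L=\la'(|v|^2)$ is a \emph{polynomial} in $D_1,\dots,D_d$, so bounded $\mu_{D_j}(\ph^{(n)})$ forces bounded $\mu_L(\ph^{(n)})$, and the subelliptic estimates go through unchanged. Second, in the continuity step you invoke the uniform $C^\infty_{\rm loc}$ bounds to upgrade convergence, but those bounds presuppose that the eigenvalues stay bounded along the sequence, which is exactly what you are trying to prove. The clean fix avoids regularity altogether: pick $f\in C^\infty_c(N)$ with $\int f\,\ph\ne0$; then
\[
\mu_{D_j}(\ph^{(n)})\int f\,\ph^{(n)}=\int (D_j^t f)\,\ph^{(n)}\longrightarrow \int (D_j^t f)\,\ph=\mu_{D_j}(\ph)\int f\,\ph,
\]
and divide. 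Your injectivity justification via Theorem~\ref{BJR} is more elaborate than necessary; the usual one-line argument is that bounded spherical functions are exactly the characters of $L^1(N)^K$, and a character is determined by the values $\mu_{D_j}$ because the $D_j$ generate $\bD(N)^K$.
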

 
 \medskip

\subsection{Schwartz spaces}\hfill

\bigskip

For functions on $\bR^n$, the $p$-th Schwartz norm is
\begin{equation}\label{p-norm}
\|f\|_{(p)}=\sum_{|\al|\le p}\sup_{x\in\bR^n}(1+|x|)^p|\de^\al f(x)|\ ,
\end{equation}
and $\cS(\bR^n)$ is the space of $C^\infty$-functions $f$ for which $\|f\|_{(p)}<\infty$ for all $p\in\bN$, endowed with the Fr\'echet topology induced by the family of all Schwartz norms. Since this topology does not depend on the choice of coordinates, the Schwartz space can be defined intrinsically on any finite-dimensional real vector space.

Using the fact that the exponential map $\exp:\fn\longrightarrow N$ is a diffeomorphism, we define
$$
\cS(N)=\big\{f:f\circ\exp\in\cS(\fn)\big\}\ .
$$

We refer to \cite{FS} for the basic facts concerning $\cS(N)$, in particular about the possibility of defining $\cS(N)$ intrinsically, with partial derivatives replaced by left-invariant vector fields and Euclidean norm replaced by a homogeneous norm in \eqref{p-norm}.

\medskip

If $E$ is a closed subset of $\bR^d$, we call $\cS(E)$ the space of restrictions to $E$ of functions in $\cS(\bR^d)$, with the quotient topology induced from the identification
$$
\cS(E)=\cS(\bR^d)/\{f:f=0\ {\rm on }\ E\}\ .
$$

The following statement is a consequence of Lemma 3.1 in \cite{ADR2}.

\begin{proposition}\label{D-independence}
Let $\cD=(D_1,\dots,D_d)$ and $\cD'=(D'_1,\dots,D'_{d'})$ two generating families of elements of $\bD(N)^K$. Then the map 
$$
f\longmapsto f\circ(\Phi_\cD\circ\Phi_{\cD'}\inv)
$$
is an isomorphism from $\cS(\Sigma_\cD)$ onto $\cS(\Sigma_{\cD'})$.
\end{proposition}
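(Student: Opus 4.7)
Since $(N,K)$ is a nilpotent Gelfand pair, the algebra $\bD(N)^K$ is commutative, so every $D'_j$ is a polynomial in $D_1,\ldots,D_d$. This yields a polynomial map $R:\bR^d\to\bR^{d'}$ and, symmetrically, a polynomial $S:\bR^{d'}\to\bR^d$ expressing the $D_i$'s in terms of the $D'_j$'s. Because bounded spherical functions are simultaneous eigenfunctions of $\bD(N)^K$, these polynomial identities pass to the eigenvalue tuples: $\Phi_{\cD'}=R\circ\Phi_\cD$ and $\Phi_\cD=S\circ\Phi_{\cD'}$ on $\Sigma(N,K)$. In particular, the homeomorphism $\Phi_\cD\circ\Phi_{\cD'}\inv:\Sigma_{\cD'}\to\Sigma_\cD$ coincides with $S|_{\Sigma_{\cD'}}$, the restriction of a polynomial map; by Theorem \ref{embedding} it is indeed a bijection.

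I would then factor the comparison through the combined family $\cD''=(D_1,\ldots,D_d,D'_1,\ldots,D'_{d'})$, whose image $\Sigma_{\cD''}\subset\bR^{d+d'}$ is the joint graph of $\Phi_\cD$ and $\Phi_{\cD'}$. By iterating one generator at a time, it suffices to handle the single step from $\cD$ to $\cD''=(D_1,\ldots,D_d,D_{d+1})$ with $D_{d+1}=R(D_1,\ldots,D_d)$ for some polynomial $R$, in which case $\Sigma_{\cD''}=\{(x,R(x)):x\in\Sigma_\cD\}\subset\bR^{d+1}$ is a graph.

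For this base step I would construct two explicit continuous linear maps between the ambient Schwartz spaces: the restriction-to-graph map
$$T_1:\cS(\bR^{d+1})\longrightarrow\cS(\bR^d),\qquad T_1\tilde F(x)=\tilde F(x,R(x)),$$
and the extension-by-cut-off map
$$T_2:\cS(\bR^d)\longrightarrow\cS(\bR^{d+1}),\qquad T_2F(x,t)=F(x)\,\psi(t-R(x)),$$
where $\psi\in C^\infty_c(\bR)$ is a fixed bump with $\psi(0)=1$. For $T_1$, the chain-rule differentiation produces polynomial weights in $|x|$ coming from derivatives of $R$, absorbed by the rapid decay of $\tilde F$ in both variables. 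For $T_2$, the support condition $|t-R(x)|\le M$ forces $(1+|x|+|t|)$ to be polynomially bounded by $(1+|x|)$ since $R$ is polynomial, so the rapid decay of $F$ in $x$ dominates. Both $T_1$ and $T_2$ evidently send functions vanishing on the respective spectra to functions vanishing on the other, so they descend to continuous maps between $\cS(\Sigma_\cD)$ and $\cS(\Sigma_{\cD''})$. Moreover $T_1T_2F(x)=F(x)\psi(0)=F(x)$, while evaluating $T_2T_1\tilde F$ on the graph $t=R(x)$ gives $\tilde F(x,R(x))\psi(0)=\tilde F(x,R(x))$; hence $T_1$ and $T_2$ induce mutually inverse Fr\'echet isomorphisms on the quotient spaces.

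Composing the resulting isomorphisms $\cS(\Sigma_\cD)\cong\cS(\Sigma_{\cD''})\cong\cS(\Sigma_{\cD'})$ along the two coordinate projections yields the claimed isomorphism, and tracing through the definitions shows it acts by pullback along $\Phi_\cD\circ\Phi_{\cD'}\inv$. The main technical hurdle is the continuity of $T_2$: one has to keep track of how the Schwartz index on $\bR^{d+1}$ is bounded in terms of a (typically larger) Schwartz index on $\bR^d$, with the degree of $R$ producing a polynomial blow-up. No qualitative obstruction arises, but the estimate must simultaneously account for the weights introduced by $\nabla R$ through the Leibniz rule and the implicit $|t|$-growth encoded in the support of $\psi(t-R(x))$.
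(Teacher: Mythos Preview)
Your argument is correct. The paper gives no proof of its own, merely citing Lemma~3.1 of \cite{ADR2}; your reconstruction---factoring through the combined family $\cD''=(\cD,\cD')$, reducing to the addition of a single redundant generator so that $\Sigma_{\cD''}$ is a polynomial graph over $\Sigma_\cD$, and exhibiting the continuous mutually inverse pair $(T_1,T_2)$ between the ambient Schwartz spaces via a compactly supported cut-off---is precisely the strategy of that lemma.
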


This implies that the validity of \eqref{conjecture} for a given n.G.p. does not depend of the choice of $\cD$.

\bigskip

\subsection{Spectral multipliers}\hfill

\bigskip

Let $\cD=\{D_1,\dots,D_d)$ be a generating system of formally self-adjoint elements of $\bD(N)^K$. Then the $D_j$ are essentially self-adjoint and $\Sigma_\cD$ is their joint $L^2$-spectrum (cf. \cite{FiRi}, Proposition 3.1).

Therefore, any bounded Borel function $m$ on $\Sigma_\cD$ defines a bounded operator $m(D_1,\dots,D_d)$ on $L^2(N)$ which is left-invariant and commutes with the action of $K$. By the Schwartz kernel theorem, there is 
$$
K_m=m(D_1,\dots,D_d)\del_0\in\cS'(N)^K\ ,
$$
such that
$$
m(D_1,\dots,D_d)F=F*K_m\ .
$$

The following proposition combines together Theorem 5.2 in \cite{ADR2} and Proposition 3.3 in \cite{FiRi}.
\begin{proposition}\label{hulanicki}
Let $D_1,\dots,D_k\in\cD$ be homogeneous, with at least one $D_j$ hypoelliptic. 

If $g\in\cS(\bR^k)$, then $g(D_1,\dots,D_k)\del_0\in\cS(N)^K$.
Furthermore the mapping $g\longmapsto g(D_1,\dots,D_k)\del_0$ is continuous from $\cS(\bR^k)$ to $\cS(N)^K$.
\end{proposition}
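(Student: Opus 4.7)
The plan is to reduce the multi-variable statement to the classical Hulanicki theorem for a single positive hypoelliptic operator. Let $d_j$ denote the homogeneous degree of $D_j$ and let $2M$ be a common multiple of $d_1,\ldots,d_k$. Define
\[
L \;=\; \sum_{j=1}^k D_j^{2M/d_j} \;\in\; \bD(N)^K.
\]
Then $L$ is formally self-adjoint, positive, homogeneous of degree $2M$, and hypoelliptic: positivity of each even-power summand together with the assumed hypoellipticity of at least one $D_j$ ensures that $d\pi(L)$ is injective on smooth vectors for every non-trivial $\pi\in\widehat N$, so $L$ is Rockland. The classical single-variable Hulanicki theorem then gives continuity of $h\mapsto h(L)\delta_0$ from $\cS(\bR)$ to $\cS(N)^K$.

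To bootstrap to the multi-variable functional calculus, I would exploit that each $D_j$ commutes with $L$ (both lie in the commutative algebra $\bD(N)^K$) and that $D_j$ acts as a continuous left-invariant differential operator on $\cS(N)$. Consequently, for any polynomial $P\in\bR[x_1,\ldots,x_k]$ and any $h\in\cS(\bR)$,
\[
P(D_1,\ldots,D_k)\,h(L)\delta_0 \;=\; h(L)\,P(D_1,\ldots,D_k)\delta_0 \;\in\; \cS(N)^K,
\]
with joint continuous dependence on $(P,h)$. The remaining task is to represent an arbitrary $g\in\cS(\bR^k)$ in a form compatible with this construction. A natural attempt is to choose $\tilde h\in\cS(\bR)$ strictly positive on the spectrum of $L$ and to factor $g(x)=G(x)\,\tilde h(p(x))$ where $p(x)=\sum x_j^{2M/d_j}$, then approximate the residual $G$ via the nuclearity of $\cS(\bR^k)$ and pass the approximation through the functional-calculus map.

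The main obstacle is that $G=g/\tilde h(p(\cdot))$ is generically not Schwartz, so the naive factorisation above must be replaced by a quantitative regularisation. Following the strategy of Theorem~5.2 in \cite{ADR2} and Proposition~3.3 in \cite{FiRi}, one argues directly on Schwartz seminorms: iterated commutation of the $D_j$'s with $L$, combined with Gaussian-type bounds on the heat kernel $e^{-tL}\delta_0$ (valid because $L$ is Rockland), allows one to estimate $\|g(D_1,\ldots,D_k)\delta_0\|_{(p)}$ by a finite Schwartz seminorm of $g$. Once these quantitative bounds are established, both the membership $g(D_1,\ldots,D_k)\delta_0\in\cS(N)^K$ and the continuity of the map $g\mapsto g(D_1,\ldots,D_k)\delta_0$ between Fréchet spaces follow immediately; alternatively, the closed graph theorem can be invoked once the map is shown to be well-defined and densely linear.
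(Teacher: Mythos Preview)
The paper does not supply a proof of this proposition: it is stated as a combination of Theorem~5.2 in \cite{ADR2} and Proposition~3.3 in \cite{FiRi}, with no further argument. Your sketch follows the same overall strategy as those references---build an auxiliary homogeneous Rockland operator from the $D_j$'s and bootstrap from the single-variable Hulanicki theorem---so at the level of approach there is no disagreement to report.

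That said, as a self-contained argument your proposal has an acknowledged gap. First, a minor technical slip: taking $2M$ to be a common multiple of the $d_j$ does not force the exponents $2M/d_j$ to be even, so the summands $D_j^{2M/d_j}$ need not be positive and your Rockland argument for $L$ breaks; you should take $M$ itself to be a common multiple of the $d_j$. More importantly, you correctly identify that the factorisation $g=G\cdot\tilde h(p)$ fails to produce a Schwartz $G$, and your remedy is simply to invoke ``the strategy of Theorem~5.2 in \cite{ADR2} and Proposition~3.3 in \cite{FiRi}''---which is exactly what the paper does. The heart of those arguments is not a factorisation but rather the spectral containment $|\xi_j|\le C\,\xi_1^{\gamma_j/2}$ on $\Sigma_\cD$ (subelliptic estimates for the hypoelliptic $D_1$), which lets one localise $g$ to a region where the multi-variable calculus is dominated by the single-variable one for $L$; without actually carrying out that step, your proposal is, like the paper's, a citation rather than a proof.
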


Notice that this statement proves \eqref {multiplier} for $\cD$ consisting of homogeneous operators, with one of them hypoelliptic. 
We can always choose such $\cD$ (see subsection \ref{embedding_in_euclidean_spaces}). Thus by Proposition \ref{D-independence}, this gives \eqref {multiplier} for every n.G.p.

\bigskip

\subsection{Abelian pairs}\hfill

\bigskip

We briefly discuss the case of a n.G.p. where $N$ is abelian. Without loss of generality, we can assume that $N=\bR^n$ and that $K$ is a closed subgroup of ${\rm O}_n$. Proofs and details on the following statements can be found in \cite{ADR2}.

If $\rho=(\rho_1,\dots,\rho_d)$ is a Hilbert basis, the $d$-tuple $\cD$ consists of the constant coefficient operators $D_j=\rho_j(i\inv\nabla)$, where $\nabla=(\de_{x_1},\dots,\de_{x_n})$. Then 
$$
\Sigma_\cD=\rho(\bR^n)\ .
$$

The Gelfand transform of a function $F\in L^1(\bR^n)^K$ and its Fourier transform $\hat F$ are related by the identity
\begin{equation}\label{G-F}
\hat F=\cG F\circ\rho\ .
\end{equation}

For abelian pairs, Theorem \ref{main} is a consequence of \eqref{G-F} and of the following result, adapted from \cite{S,M}.

\begin{theorem}\label{Schwarz-Mather}
There exists a continuous linear operator $\cE_K:\cS(\bR^n)^K\longrightarrow \cS(\bR^d)$ such that, for every $G\in\cS(\bR^n)^K$,
$$
G=(\cE_K G)\circ\rho\ .
$$
\end{theorem}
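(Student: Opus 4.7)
The starting point is G.~W.~Schwarz's theorem on smooth invariants, which furnishes a continuous linear extension operator
$$
\cE^0:C^\infty(\bR^n)^K\longrightarrow C^\infty(\bR^d)
$$
such that $G=\cE^0(G)\circ\rho$ for every $K$-invariant smooth function $G$, where both spaces are given the Fr\'echet topology of uniform convergence of derivatives on compact sets. The task reduces to promoting $\cE^0$ to a map that preserves Schwartz-class decay at infinity, with continuous dependence in the Schwartz topologies.

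The key geometric input, which lets one pass from uniform-on-compacta estimates to decay estimates, is that the Hilbert basis $\rho=(\rho_1,\dots,\rho_d)$ can be chosen to contain the square-norm polynomial $|v|^2$ (see subsection \ref{embedding_in_euclidean_spaces}). Consequently there is $c>0$ with $|\rho(v)|\ge c|v|^2$, so the Hilbert map is proper; in particular for any $G\in\cS(\bR^n)^K$ the function $\cE^0(G)$, restricted to $\rho(\bR^n)$, decays faster than any polynomial, with rates controlled by the Schwartz seminorms of $G$. This already shows that $\cE^0(G)\big|_{\rho(\bR^n)}$ lives in $\cS(\rho(\bR^n))$ with continuous dependence on $G$; what remains is to produce a full Schwartz extension to $\bR^d$.

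The final step is to multiply $\cE^0(G)$ by a suitably chosen cutoff $\chi\in C^\infty(\bR^d)$ which is identically $1$ on $\rho(\bR^n)$ and decays rapidly in the transverse directions. Setting $\cE_K(G):=\chi\cdot\cE^0(G)$ one obtains a function that agrees with $\cE^0(G)$ on $\rho(\bR^n)$, hence still satisfies $G=\cE_K(G)\circ\rho$, and lies in $\cS(\bR^d)$. Continuity of $\cE_K$ in the Schwartz topologies is obtained by combining the continuity of $\cE^0$ with the uniform bounds on $\chi$ and its derivatives.

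The main obstacle is the construction of $\chi$ and the corresponding quantitative estimates: $\rho(\bR^n)$ is a closed semi-algebraic cone which is typically singular, so a genuine tubular-neighborhood picture fails along the strata of lower orbit type. One handles this by working stratum-by-stratum, using the \L ojasiewicz inequality to control the distance function to $\rho(\bR^n)$ by a power of a defining polynomial, and patching local extensions via a partition of unity adapted to the orbit-type stratification. This, together with Mather's division lemma, is precisely the content of the proofs in \cite{S, M}, and is the step where all the technical work is concentrated.
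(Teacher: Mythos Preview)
The paper does not give its own proof of this statement: Theorem~\ref{Schwarz-Mather} is simply quoted, with the attribution ``adapted from \cite{S,M}'', and then used as a black box (in Corollary~\ref{abelianmain} and again in the proof of Theorem~\ref{normal}). So there is no argument in the paper to compare your proposal against.

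As for the proposal itself, it is a reasonable outline of the standard reduction, but it is a plan rather than a proof. The substantive step you correctly isolate---passing from Schwarz's $C^\infty$ extension $\cE^0$ to a Schwartz extension---is precisely where all the difficulty lies, and your argument does not resolve it. Continuity of $\cE^0$ in the Fr\'echet $C^\infty$ topology controls $\cE^0(G)$ and its derivatives only on compact subsets of $\bR^d$; it gives no decay whatsoever at infinity, even on a thin neighbourhood of $\rho(\bR^n)$. Properness of $\rho$ tells you that $\cE^0(G)$ decays \emph{on} $\rho(\bR^n)$ (because there it equals the descent of $G$), but says nothing about nearby points, and in particular nothing about transverse derivatives. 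Multiplying by a cutoff $\chi$ that is $1$ on $\rho(\bR^n)$ therefore does not, by itself, produce a Schwartz function: you would still need uniform decay of $\de^\alpha\cE^0(G)$ on $\supp\chi$, which is exactly what has not been established. Your last paragraph acknowledges this and points back to the techniques of \cite{S,M}; that is honest, but it means the proposal ultimately defers the content of the theorem to the same references the paper cites.
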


\begin{corollary}\label{abelianmain}
The map $F\longmapsto \cE_K\hat F$ is linear and continuous, and its composition with the canonical projection of $\cS(\bR^d)$ onto $\cS(\Sigma_\cD)$ is $\cG$.
\end{corollary}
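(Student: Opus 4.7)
The plan is to assemble the corollary from two facts we have in hand: the Fourier transform is a continuous $K$-equivariant automorphism of $\cS(\bR^n)$, and the Schwarz--Mather extension operator $\cE_K$ from Theorem \ref{Schwarz-Mather} is a continuous linear lift through the Hilbert map $\rho$. The identity $\hat F=\cG F\circ\rho$ from \eqref{G-F} then identifies $\cE_K\hat F$ with a Schwartz extension of $\cG F$.

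More concretely, I would first note that if $F\in\cS(\bR^n)^K$, then $\hat F\in\cS(\bR^n)^K$ as well: the Fourier transform preserves the Schwartz space, and it commutes with the $K$-action because $K\subset \mathrm{O}_n$ acts by orthogonal transformations (so $\widehat{F\circ k}=\hat F\circ k$). Hence $\cE_K\hat F\in\cS(\bR^d)$ is well-defined, and by Theorem \ref{Schwarz-Mather},
$$
\hat F=(\cE_K\hat F)\circ \rho\qquad\text{on }\bR^n.
$$

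Next, combining this identity with \eqref{G-F} yields
$$
(\cE_K\hat F)\circ\rho=\cG F\circ\rho\qquad\text{on }\bR^n.
$$
Since $\Sigma_\cD=\rho(\bR^n)$, this is precisely the statement that $\cE_K\hat F$ restricts to $\cG F$ on $\Sigma_\cD$. In other words, the image of $\cE_K\hat F$ under the canonical projection $\cS(\bR^d)\to\cS(\Sigma_\cD)$ equals $\cG F$, which is the second assertion of the corollary.

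Finally, linearity is immediate from linearity of both the Fourier transform and $\cE_K$. For continuity, I would observe that $F\mapsto\hat F$ is continuous $\cS(\bR^n)^K\to\cS(\bR^n)^K$ (in the subspace Fr\'echet topology), and Theorem \ref{Schwarz-Mather} provides continuity of $\cE_K:\cS(\bR^n)^K\to\cS(\bR^d)$; composition of continuous linear maps is continuous. There is no real obstacle here—the work has been done in Theorem \ref{Schwarz-Mather} and in the derivation of \eqref{G-F}—so the corollary is essentially a two-line verification, and together with Proposition \ref{hulanicki} it establishes both \eqref{multiplier} and \eqref{control}, hence Theorem \ref{main}, in the abelian case.
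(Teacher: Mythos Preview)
Your proof is correct and matches the paper's intended argument: the corollary is stated without proof in the paper precisely because it follows immediately from \eqref{G-F} and Theorem~\ref{Schwarz-Mather} in the way you spell out. Your verification that $\hat F$ remains $K$-invariant (needed for $\cE_K$ to apply) and that $\Sigma_\cD=\rho(\bR^n)$ converts the identity $(\cE_K\hat F)\circ\rho=\cG F\circ\rho$ into the desired restriction statement is exactly the unpacking the reader is meant to supply.
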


This shows that \eqref{control} holds with $u_p=\cE_K\hat F$. This is a strong improvement on \eqref{control}, because $u_p$ is independent of $p$ and it depends linearly on $F$.

\bigskip
\subsection{$H$-type groups and $\fv$-radial functions}\hfill
\bigskip

An {\it $H$-type Lie algebra} is a step-two nilpotent Lie algebra $\fn=\fv\oplus\fz$, endowed with an inner product $\lan\ ,\ \ran$ such that 
\begin{enumerate}
\item $\fz=[\fn,\fn]$;
\item $\fv\perp\fz$;
\item for any $v\in\fv$ and $z\in\fz$, the element $J_zv$ defined by the condition
\begin{equation}\label{Jz}
\lan J_zv,v'\ran=\big\lan z,[v,v']\big\ran\ ,\qquad \forall\,v'\in\fv\ ,
\end{equation}
satisfies the equality
\begin{equation}\label{orthogonal}
|J_zv|=|z||v|\ .
\end{equation}
\end{enumerate}

An $H$-type group is a connected, simply connected Lie group whose Lie algebra is $H$-type.

It is proved in \cite{DR} that the space $L^1_\fv(N)$ of $\fv$-radial integrable functions $F$ on any $H$-type group $N$ (i.e. such that $F(v,z)=F(v',z)$ if $|v|=|v'|$) is a commutative Banach algebra under convolution.

The following statement combines results from \cite{DR} and \cite{ADR1} to show that  $L^1_\fv(N)$ admits a spectral analysis that resembles very much the spherical analysis of nilpotent Gelfand pairs.
One must take into account that, in general, $L^1_\fv(N)$ does {\it not} coincide with $L^1(N)^K$ for any $K$.

\begin{theorem}\label{htype}
Let $\{X_1,\dots,X_m\}$, $\{T_1,\dots,T_n\}$ be orthonormal bases of $\fv$ and $\fz$ respectively (regarded as left-invariant vector fields on $N$). Set $L=-\sum_{j=1}^mX_j^2$. Then  
\begin{enumerate}
\item the multiplicative linear functionals on $L^1_\fv(N)$ are given by integration against the bounded ``spherical'' functions, defined as the bounded $\fv$-radial joint eigenfunctions $\ph$ of $\cD=\{L,i\inv T_1,\dots,i\inv T_n\}$ such that $\ph(0)=1$;
\item the Gelfand spectrum of $L^1_\fv(N)$ is homeomorphic to the set $\Sigma_\cD$ of $(n+1)$-tuples of eigenvalues of the bounded spherical functions, relative to the elements of $\cD$;
\item $\Sigma_\cD$ is also the joint $L^2$-spectrum of the elements of $\cD$;
\item the analogue of \eqref{conjecture} holds, i.e., under the Gelfand transform,
$$
\cS_\fv(N)\cong \cS(\Sigma_\cD)\ .
$$
\end{enumerate}
\end{theorem}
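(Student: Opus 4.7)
My plan is to verify the four assertions by combining the commutative spherical framework of \cite{DR} for $\fv$-radial convolution algebras with the Schwartz-space techniques of \cite{ADR1} for the Heisenberg group. For (1), starting from the commutativity of $L^1_\fv(N)$ proved in \cite{DR}, Gelfand theory identifies each nonzero multiplicative functional with integration against a bounded function $\ph$. Since $L$ and $i\inv T_j$ are formally self-adjoint, left-invariant, and commute with right-convolution by $\fv$-radial integrable functions, every such $\ph$ must be a joint eigenfunction of $\cD$ with $\ph(0)=1$; conversely, any such bounded eigenfunction yields a multiplicative functional through the positive-type/integral-formula analysis of \cite{DR}. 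Assertion (2) then follows by the same compact-open-versus-Euclidean homeomorphism argument used in Theorem \ref{embedding}: $\Phi_\cD$ is injective because the eigenvalues determine $\ph$ uniquely, and bicontinuous by dominated convergence on the uniformly bounded spherical functions. For (3), the elements of $\cD$ are essentially self-adjoint on $L^2(N)$ ($L$ is hypoelliptic; the $i\inv T_j$ are real-coefficient central derivatives) and commute strongly, so their joint $L^2$-spectrum coincides with $\Sigma_\cD$ by exactly the argument of Proposition 3.1 of \cite{FiRi}.

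For (4), the multiplier direction is a direct application of a Hulanicki-type result analogous to Proposition \ref{hulanicki} to the homogeneous, self-adjoint generators $L,i\inv T_1,\dots,i\inv T_n$: the map $g\mapsto g(L,i\inv T_1,\dots,i\inv T_n)\del_0$ is continuous from $\cS(\bR^{n+1})$ to $\cS_\fv(N)$, and its image lies in $\cS_\fv(N)$ because each operator in $\cD$ preserves $\fv$-radial functions. The control direction is the substantive part. My plan is to adapt the Heisenberg-group argument of \cite{ADR1} by exploiting the $H$-type identity \eqref{orthogonal}: for each nonzero $\zeta\in\fz$, the operator $J_\zeta/|\zeta|$ endows $\fv$ with a complex structure whose geometry is the same as that of the Heisenberg $H_{m/2}$, so the fibre of the Plancherel decomposition at representations with central character $\zeta$ is isomorphic to the corresponding Heisenberg fibre, uniformly in the direction of $\zeta$. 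Fibrewise one then inherits the Geller-type formula and the Schwartz extension of $\cG F$ from $\Sigma_\cD$ to $\bR^{n+1}$, with quantitative control of Schwartz norms.

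The main obstacle I anticipate is assembling these fibrewise extensions into a single globally defined Schwartz function, which requires handling the behaviour at the singular set $\{\zeta=0\}$ of the spectrum, where the Heisenberg reduction degenerates. The argument of \cite{ADR1} is precisely designed to overcome this: it first produces an \emph{a priori} Taylor expansion (a Geller-type formula) valid on the singular part and then constructs, via a suitable cutoff-and-extension procedure, a smooth extension away from it whose Taylor data at the singular set match. Smooth dependence of the Heisenberg fibres on the direction of $\zeta$, guaranteed by \eqref{orthogonal}, ensures that every step of \cite{ADR1} transfers verbatim to the $H$-type setting, completing the proof.
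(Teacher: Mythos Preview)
The paper does not give its own proof of Theorem~\ref{htype}: it is stated in the preliminaries as a combination of results from \cite{DR} and \cite{ADR1}, with no argument beyond those citations. Your outline is consistent with that attribution and correctly identifies which parts come from where---items (1)--(3) from the $\fv$-radial spherical theory on $H$-type groups in \cite{DR} together with the general embedding and spectral arguments you cite, and item (4) from the Schwartz-extension machinery of \cite{ADR1}. In that sense your proposal is not so much an alternative approach as a fleshing-out of what the paper leaves implicit.

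One caution on your sketch for (4): the passage from the Heisenberg result of \cite{ADR1} to general $H$-type groups is not quite ``verbatim.'' The paper itself warns that $L^1_\fv(N)$ is in general \emph{not} of the form $L^1(N)^K$ for any compact $K$, so the Gelfand-pair framework of \cite{ADR1} does not apply directly. What makes the transfer work is that the $\fv$-radial spherical functions on an $H$-type group are explicitly given by Laguerre-type formulas depending only on $|v|$ and $z$ (this is in \cite{DR}), so the spectrum $\Sigma_\cD$ is a rotated copy of the Heisenberg fan in each central direction, and the Geller/Taylor and extension arguments of \cite{ADR1} can be run on that explicit model. Your appeal to ``smooth dependence of the Heisenberg fibres on the direction of $\zeta$'' is the right intuition, but the actual argument uses the explicit form of the spherical functions rather than an abstract fibrewise reduction.
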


\bigskip

\section{Proof of Theorem \ref{normal}}\label{reduction}\hfill
\bigskip

Let $K_1$ be a subgroup of $K_2$, and assume that $(N,K_1)$ is a nilpotent Gelfand pair. Then also $(N,K_2)$ is a n.G.p., for the simple fact that $L^1(N)^{K_2}$ is a subalgebra of $L^1(N)^{K_1}$. On the other hand, averaging over $K_2$ gives a projection of $L^1(N)^{K_1}$ onto $L^1(N)^{K_2}$.

Therefore, every multiplicative linear functional on $L^1(N)^{K_1}$ induces, by restriction, a multiplicative linear functional on $L^1(N)^{K_2}$. The restriction map is surjective, continuous and open from $\Sigma(N,K_1)$ to $\Sigma(N,K_2)$.

If, in addition, $K_1$ is normal in $K_2$, then $K_2$ acts by composition on $L^1(N)^{K_1}$, inducing an action of the quotient group $W=K_2/K_1$ on $\Sigma(N,K_1)$. In this case, the multiplicative functionals on $L^1(N)^{K_1}$ with the same restriction to $L^1(N)^{K_2}$ form a $W$-orbits, and therefore $\Sigma(N,K_2)\cong\Sigma(N,K_1)/W$.

This last identification can be realised on the embedded copies of the spectra as follows. 

Given $D\in\bD(N)^{K_1}$ and $k\in K_2$, let $D^k\in \bD(N)^{K_1}$ be defined by $D^k(f\circ k)=(Df)\circ k$. This gives an action of $W$ on $\bD(N)^{K_1}$ which respects homogeneity.

Let $\bD_m(N)^{K_1}$ be the homogeneous component of degree $m$ of $\bD(N)^{K_1}$, and choose $\bar m$ large enough so that $\cV=\sum_{m\le\bar m}\bD_m(N)^{K_1}$ generates $\bD(N)^{K_1}$. 

We also choose $\cD_1=(D_1,\dots,D_q)$ a basis of $\cV$, which gives an embedding $\Sigma_{\cD_1}$ of $\Sigma(N,K_1)$ in $\bR^q$. 
Then $W$ acts linearly on $\cV$, $\Sigma_{\cD_1}$ is invariant under this action, and the embedding of $\Sigma(N,K_1)$ intertwines the $W$-actions.   

The choice of the basis $\cD_1$ also introduces coordinates $(\xi_1,\dots,\xi_q)$ on $\cV$.
We fix a Hilbert basis $\rho(\xi)=\big(\rho_1(\xi),\dots,\rho_r(\xi)\big)$ of $\cP(\cV)^W$.

\begin{lemma}\label{hilbertmap} 
For $j=1,\dots,r$, let $D'_j=\rho_j(D_1,\dots,D_q)$. Then $\cD_2=(D'_1,\dots,D'_r)$ is a system of generators for $\bD(N)^{K_2}$ and $\Sigma_{\cD_2}=\rho(\Sigma_{\cD_1})$.
\end{lemma}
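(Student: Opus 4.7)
The plan is to prove both claims by combining commutativity of $\bD(N)^{K_1}$ with averaging over the compact quotient $W=K_2/K_1$. For the algebraic assertion, I would consider the substitution $\Psi\colon\bR[\xi_1,\dots,\xi_q]\to\bD(N)^{K_1}$ defined by $\xi_i\mapsto D_i$. Since $(N,K_1)$ is a nilpotent Gelfand pair, $\bD(N)^{K_1}$ is commutative, so $\Psi$ is a well-defined algebra homomorphism, and it is surjective because $\cV=\mathrm{span}\{D_1,\dots,D_q\}$ was chosen to generate $\bD(N)^{K_1}$. A $W$-invariant inner product on $\cV$ provides a $W$-equivariant identification $\cV\cong\cV^*$, hence $\bR[\xi_1,\dots,\xi_q]\cong\cP(\cV)$ as $W$-algebras, with respect to which $\Psi$ is $W$-equivariant. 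Combining surjectivity of $\Psi$ with $W$-averaging then shows $\Psi$ restricts to a surjection $\cP(\cV)^W\to\bD(N)^{K_2}$, so $D'_j=\Psi(\rho_j)$ generate $\bD(N)^{K_2}$.

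For the spectrum identification, let $\ph\in\Sigma(N,K_1)$. Commutativity of the $D_i$ and the joint-eigenfunction property of $\ph$ give
\[
D'_j\,\ph=\rho_j(D_1,\dots,D_q)\,\ph=\rho_j\bigl(\mu_{D_1}(\ph),\dots,\mu_{D_q}(\ph)\bigr)\,\ph,
\]
so $\mu_{D'_j}(\ph)=\rho_j\circ\Phi_{\cD_1}(\ph)$. To transfer this to $\Sigma(N,K_2)$, I would identify the $K_2$-spherical function $\psi=r(\ph)$ associated to $\ph$ by restriction of multiplicative functionals (as described in the text preceding the lemma) with the $W$-average
\[
\tilde\ph(x)=\int_W\ph(w\cdot x)\,dw.
\]
Since $D^w=D$ for $D\in\bD(N)^{K_2}$ and $w\in W$, the formula $D^w(\ph\circ w)=(D\ph)\circ w$ yields $D\tilde\ph=\mu_D(\ph)\,\tilde\ph$; together with $\tilde\ph(0)=1$ and $K_2$-invariance this shows $\tilde\ph$ is a bounded $K_2$-spherical function with $\mu_D(\tilde\ph)=\mu_D(\ph)$. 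A change of variables using $K_2$-invariance of $F\in L^1(N)^{K_2}$ verifies $\int_N F(x)\,\tilde\ph(x^{-1})\,dx=\hat\ph(F)$, so $\tilde\ph$ has the same multiplicative functional as $\psi$, whence $\tilde\ph=\psi$.

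Specialising to $D=D'_j$ yields $\mu_{D'_j}(r(\ph))=\rho_j(\Phi_{\cD_1}(\ph))$, and surjectivity of $r\colon\Sigma(N,K_1)\to\Sigma(N,K_2)$ identifies $\Sigma_{\cD_2}=\Phi_{\cD_2}(\Sigma(N,K_2))$ with $\rho(\Phi_{\cD_1}(\Sigma(N,K_1)))=\rho(\Sigma_{\cD_1})$. The main technical care lies in matching the several $W$-actions: on $\cV$ by $D\mapsto D^k$, on $\cP(\cV)$ used to define the $\rho_j$, and on $\Sigma_{\cD_1}$ from the paragraph of text preceding the lemma; the $W$-invariant inner product on $\cV$ makes them simultaneously compatible. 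A secondary subtlety is producing $\tilde\ph=r(\ph)$ as an equality of spherical functions rather than merely functions with matching eigenvalues, which uses the uniqueness of spherical functions given their multiplicative functional.
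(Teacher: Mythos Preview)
Your argument is correct and, for the first assertion, follows the same averaging-over-$W$ strategy as the paper: write $D\in\bD(N)^{K_2}$ as $P(D_1,\dots,D_q)$, average $P$ over $W$, and express the result through the Hilbert basis $\rho$. Your explicit attention to making the substitution map $\Psi$ genuinely $W$-equivariant via a $W$-invariant inner product on $\cV$ addresses a point the paper leaves implicit in the identity $D^w=(P\circ w)(D_1,\dots,D_q)$.

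For the spectrum equality the paper simply says it is ``obvious'', whereas you supply a complete argument. Your route---identifying the restricted $K_2$-spherical function $r(\ph)$ concretely as the $W$-average $\tilde\ph$ and then reading off its eigenvalues---is valid but more than strictly required: once you have $\mu_{D'_j}(\ph)=\rho_j(\Phi_{\cD_1}(\ph))$ for $K_1$-spherical $\ph$, it is enough to note that $\ph$ and $r(\ph)$ determine the same multiplicative functional on $L^1(N)^{K_2}$, hence the same eigenvalues under each $D'_j\in\bD(N)^{K_2}$, and then invoke surjectivity of $r$. Producing $\tilde\ph$ explicitly is a pleasant bonus but not needed for the lemma.
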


\begin{proof} 
It is quite clear that each $D'_j$ is $W$-invariant, hence in $\bD(N)^{K_2}$. 
Take now $D\in\bD(N)^{K_2}\subset\bD(N)^{K_1}$. Then there is $P\in\cP(\cV)$ such that $D=P(D_1,\dots,D_q)$. For every $w\in W$, we also have $D=D^w=(P\circ w)(D_1,\dots,D_q)$. If $\bar P=\int_W(P\circ w)\,dw$, we also have that $D=\bar P(D_1,\dots,D_q)$. Since $\bar P\in\cP(\cV)^W$, it equals $Q\circ\rho$, for some $Q\in\cP(\bR^r)$. Then $D=Q(D'_1,\dots,D'_r)$. This shows that $\cD_2$ generates $\bD(N)^{K_2}$.

The equality $\Sigma_{\cD_2}=\rho(\Sigma_{\cD_1})$ is obvious.
\end{proof}

We can give now the proof of Theorem \ref{normal}, following the lines of \cite{ADR2}, Section 8.

\begin{proof} Denote by $\cG_1$ and $\cG_2$ the spherical transforms of $(N,K_1)$ and $(N,K_2)$ respectively.
Any $F\in\cS(N)^{K_2}$ admits both transforms $\cG_2F$ and $\cG_1 F$, and they are related by the identity
\begin{equation}\label{transforms}
\cG_1 F=\cG_2F\circ\rho_{|_{\Sigma_{\cD_1}}}\ .
\end{equation}

By hypothesis, $\cG_1 F\in \cS(\Sigma_{\cD_1})$. Given $r\in\bN$, there are $s=s(r)\in\bN$ and $u_r\in\cS(\bR^q)$ such that 
\begin{equation}\label{extension}
{u_r}_{|_{\Sigma_{\cD_1}}}=\cG_1 F\ ,\qquad\|u_r\|_{(r)}\le C_r\|F\|_{(s)}\ .
\end{equation} 

Since $F$ is $K_2$-invariant, $\cG_1 F$ is $W$-invariant, so that also $u_r\circ w$ satisfies \eqref{extension} for every $w\in W$. Set $\tilde u_r=\int_W u_r\circ w\,dw$, and $v_r=\cE_Wu_r$, where $\cE_W$ is the linear extension operator provided by the Schwarz-Mather theorem (see Theorem \ref{Schwarz-Mather}). By \eqref{transforms}, $v_r$ is a Schwartz extension of $\cG_2F$. 

Fix now $p\in\bN$. By the continuity of $\cE_W$, there is $r$ such that $\|\cE_Wg\|_{(p)}\le C_p\|g\|_{(r)}$ for every $g\in\cS(\bR^q)^W$. Therefore,
$$
\|v_r\|_{(p)}\le C'_p\|F\|_{(s)}\ .
$$

Passing to the quotient, this provides the inclusion $\cG_2:\cS(N)^{K_2}\rightarrow \cS(\Sigma_{\cD_2})$ and its continuity.
The inverse inclusion and its continuity are guaranteed by Proposition \ref{hulanicki}.
\end{proof}

\bigskip

\section{Rank-one nilpotent Gelfand pairs}\hfill

Many rank-one nilpotent Gelfand pairs can be obtained as a result of the 
following procedure. 

\begin{definition} Let $(N,K)$ be a nilpotent Gelfand pair and $Z_0\subset N$ 
a non-trivial $K$-invariant central subgroup of $N$. Then $(N/Z_0,K)$ is again a Gelfand pair,
see e.g. \cite{V1}. The passage from $(N,K)$ to $(N/Z_0,K)$ and the resulting pair 
 $(N/Z_0,K)$  itself are called {\it central reductions} of $(N,K)$.
\end{definition}

In this section 
we assume that the centre $\fz$ of $\fn$ is rank-one, i.e., that a generic $K$-orbit in $\fz$ has codimension one. As pointed out in the Introduction, this implies that 
$[\fn,\fn]=\fz$, and hence
$$
\fn=\fv\oplus\fz\ .
$$

Up to scalars, $\fz$ admits a unique $K$-invariant inner product, which we assume to be fixed once and for all. 

\bigskip

\subsection{Classification}\label{subsec_classification}\hfill

\bigskip

Table \ref{list} below is derived from Vinberg's list \cite{V2, Y1}
(cf. Appendix). It gives the list of all pairs $(N,K)$ where
\begin{enumerate}
\item[(i)] $\fz$ is rank-one and of dimension higher than one;
\item[(ii)] $K$ is minimal, in the sense that $(N,K_1)$ is not a n.G.p. for any normal subgroup $K_1$ of $K$;
\item[(iii)] $\fv$ is irreducible under the action of $K$.
\end{enumerate}

In view of the results of \cite{ADR2} for the case where $N$ is the Heisenberg group, and of Theorem~\ref{normal}, it will be sufficient to consider the pairs in this list. In fact, one can check from Vinberg's list that if $(N,K)$ is n.G.p. satisfying (i) and (iii) and $K$ admits a normal subgroup $K_1$ such that $(N,K_1)$ is a n.G.p., then (i) and (iii) still hold for this pair.

\begin{table}[htdp]
\begin{center}
\begin{tabular}{|r||l|l|l|l|}
\hline
&$K$&$\fv$&$\fz$&cf. Theorem \ref{generators}\\
\hline
1&$\rm U_1\times\Sp_n $&$\bC^{2n}$&$\bC$& (7) after central reduction\\
2&$\Sp_1\times\Sp_n$&$\bH^n$&$\sp_1$& (9) \\
3&$\text{Spin}_7$&$\bR^8$&$\bR^7$ &(2)\\
\hline
4&$\rm U_2$&$\bC^2$&$ \su_2$& (6) after central reduction\\
5&$\Sp_2$&$\bH^2$&$H\!S^2_0\bH^2$& (7) after central reduction\\
6&$\rm U_2\times \SU_2$&$\bC^2\otimes\bC^2$&$ \su_2$& (11) after central reduction\\
7&$\SU_2\times \SU_n$ ($n\ge3$)&$\bC^2\otimes\bC^n$&$ \su_2$ & (11) after central reduction\\
8&$\SO_2 \times( \Sp_1\times\Sp_n)$&$\bR^2\otimes\bH^{n}$&$ \sp_1$& (12) after central reduction\\
9&$\SO_2\times \text{Spin}_7$&$\bR^2\otimes\bR^8$&$\bR^7$&(8)\\
\hline
10&$\SO_3$&$\bR^3$&$\so_3\cong\Lambda^2\bR^3$&(1)\\
11&$\SU_3$&$\bC^3$&$\Lambda^2\bC^3$&(5)\\
12&$\rm G_2$&$\bR^7$&$\bR^7$&(3)\\
\hline
\end{tabular}
\end{center}
\bigskip
\caption{Pairs with $\dim\fz>1$, $K$ minimal, $\fv$ irreducible, $\fz$ rank-one}
\label{list}
\end{table}

The symbol $\bH$ denotes the quaternionic algebra, $\bO$ the Cayley algebra of octonions, $HS^2_0\bH^2$ the space of quaternionic Hermitian $2\times 2$ matrices with trace zero.

The Lie brackets $[\ ,\ ]:\fv\times\fv\longrightarrow \fz$ and the $K$-actions are as follows (unless specified otherwise, vectors must be understood as columns).

\begin{enumerate}
\item[Line 1.] $[v,u]=\trans vJu$, 
where $J=\begin{pmatrix}0&I_n\\-I_n&0\end{pmatrix}$ is the canonical symplectic form on 
$\bC^{2n}$. 
The factor ${\rm Sp}_n$ of $K$ acts linearly on $\fv$ 
and trivially on $\fz$; 
the torus $\rm U_1$ acts by $(v,z)\longmapsto (e^{it}v,e^{2it}z)$.
\medskip
\item[Line 2.] $[v,u]=\IM(v^*u)$. For $k=(k_1,k_2)\in \Sp_1\times\Sp_n$, 
$k\cdot(v,z)=(k_2v k_1^*,k_1zk_1^*)$.
\medskip
\item[Line 3.] Identify $\bR^8$ with $\bO$ and $\bR^7$ 
with the space of imaginary octonions $\IM \bO$, $[v,u]=\half(vu^* - u v^*)$ 
where multiplication is in $\bO$. 
$K$ acts via the spin representation on $\bO$ and as $\SO_7$ on $\IM\bO$.
\medskip
\item[Lines 4,6,7.] Regarding the elements of $\fv$ as $2\times p$ complex matrices, 
$v=\begin{pmatrix}v_1\\v_2\end{pmatrix}$ with $v_1,v_2\in\bC^p$, we have $[v,u]=vu^*-uv^*-\half\tr(vu^*-uv^*)$. 
With $H$ denoting the second factor of $K$, the action of $k=(k_1,k_2)\in ({\rm S)U}_2\times H$ is
$k\cdot (v,z)=(k_1vk_2^*,k_1zk_1^*)$.
\medskip
\item[Line 5.]  $[v,u]=viu^*-uiv^*-\half\tr(viu^*-uiv^*)$. 
For $k\in {\rm Sp}_2$,
$k\cdot (v,z)=(kv,kzk^*)$.
\medskip
\item[Line 8,9.] Regarding $v$ as $2\times n$ quaternionic matrices (resp. octonionic columns), 
$v=\begin{pmatrix}v_1\\v_2\end{pmatrix}$, 
the Lie bracket is $[v,u]=-\half(  v_1u_1^*-u_1v_1^*+v_2u_2^* - u_2v_2^*)$.
The second factor of $K$ is the same group as at line 2 (resp. 3) 
and acts on $\bH^n$ (resp. $\bO$) in the same way, 
while the first factor is $\SO_2$ acting linearly on the $\bR^2$ factor of $\fv$ and trivially on $\fz$.
\medskip
\item[Lines 10,11.]  $[v,u]=v\wedge u$ and $K$ acts by the natural action on both $\fv$ and $\fz$.
\medskip
\item[Line 12.]  Identifying both copies of $\bR^7$ with $\IM \bO$,
$[v,u]=\half (vu^*-uv^*)$, where multiplication is in $\bO$ and with $\rm G_2$ acting by automorphisms of $\bO$ on both $\fv$ and $\fz$.
\end{enumerate}

\medskip

The table is split into three blocks:
\begin{enumerate}
\item[(i)] the first block contains the pairs in which the action of $K$ on $\fn$ is ``doubly transitive'', cf. \cite{Ko}, i.e. the orbits are products of spheres in $\fv$ and $\fz$ respectively;
\item[(ii)] the second block contains the remaining pairs in which $N$ has square-integrable representations, cf.~\cite{MW};
\item[(iii)] the third block contains the pairs in which $N$ does not have square-integrable representations.
\end{enumerate}
\medskip

We notice that all the algebras $\fn$ in the first two blocks are $H$-type.

\medskip

From Appendix, we choose a Hilbert basis $\rho$ for each pair in Table \ref{list}.
For pairs in the first block, we simply  have
$$
\rho=\big(|v|^2,|z|^2\big)\ .
$$
For the other cases, we have 
$$
\rho=(\rho_\fv,\rho_{\fv,\fz},|z|^2)\ ,
$$
natural choices of $\rho_\fv$ and $\rho_{\fv,\fz}$ being listed in Table \ref{invariants}
with the following notation:
\medskip
\begin{itemize}
\item[Lines 4-7.] The inner product on $\fv$ is $\lan v,u\ran=\RE\tr(vu^*)$, and similarly on $\fz$. 
\medskip
\item[Lines 10,11.]
Let $(e_1,e_2,e_3)$ be the canonical basis of $\fv$ over $\bK=\bR$, resp. $\bC$.
The vectors $[e_2,e_3]$, $[e_3,e_1]$, $[e_1,e_2]$ form a basis of $\fz$ over $\bK$.
Hence we identify 
$\Lambda^2\bR^3$ and $\Lambda^2\bC^3$ with $\bR^3$ and $\bC^3$ respectively.
\medskip
\end{itemize}

\begin{table}[htdp]
\begin{center}
\begin{tabular}{|l||l| l| l|  |l|l| l|  }
\hline
&$\rho_\fv$& $\rho_{\fv,\fz}$\\
\hline
4&$|v|^2$&$iv^*zv$\\
5&$|v|^2$&$v^*zv$\\
6&$|v|^2\,,\,\tr(v^*v)^2$&$ i\tr(vzv^*)$\\
7&$|v|^2\,,\,\tr(v^*v)^2$&$ i\tr(vzv^*)$\\
8&
$|v|^2\, ,\,|v_1|^4+2|v_1v_2^*|^2+|v_2|^4\, ,\,|v_1|^2|v_2|^2- \left(\RE (v_1 v_2^*)\right)^2$&
$\RE \left( (v_1v_2^*-v_2v_1^*)z \right)$
\\
9&$|v|^2\,,\,|v_1|^2|v_2|^2-\left(\RE (v_1 v_2^*)\right)^2$&
$\RE \left( (v_1v_2^*-v_2v_1^*)z \right)$
\\
\hline
10&$|v|^2$&$ \trans v z$\\
11&$|v|^2$&$ \RE(\trans v z) \,,\,\IM (\trans v z)$\\
12&$|v|^2$&$ \trans v z$\\
\hline
\end{tabular}
\end{center}
\bigskip
\caption{}
\label{invariants}
\end{table}


%

\bigskip
\subsection{Generalities about $\Sigma_\cD$}\label{generalities}\hfill

Having chosen $\rho$ as above, we obtain by symmetrisation a generating family $\cD$ of $\bD(N)^K$ consisting~of:
\begin{enumerate}
\item
\label{type1_op} the family $\cL$ of operators corresponding to the elements of $\rho_\fv$; 
$\cL$ always contains the sublaplacian $L$, coming from the symmetrisation of $|v|^2$;
 it also contains one more operator at lines 6,7,9,
and two more operators at line 8;
\item
\label{type2_op} the family $\cM$ of operators corresponding to the elements of $\rho_{\fv,\fz}$;
for the pairs of the first block, $\cM$ is empty;
for the pairs of the second and third block,
it contains a single element $M$, except for the pair at line 11, where $\cM$ consists of two operators $M_1$ and $M_2$.
\item
\label{type3_op}
 the central Laplacian $\Delta$, corresponding to $|z|^2$.
\end{enumerate}

We denote by $d$ the total number of elements of $\cD$. 
Table \ref{invariants} shows that $d$ can be equal to 3, 4 or 5. 
For the elements $\xi\in\bR^d$ we use the double notation $\xi=(\xi_1,\dots,\xi_d)$ and $\xi=(\xi_\cL,\xi_\cM,\xi_\Delta)$. Then $D_d=\Delta$, and we assume that $D_1=L$. 

We split  $\Sigma(N,K)$ as $\Sigma(N,K)^{\text{reg}}\cup\Sigma(N,K)^{\text{sing}}$, 
where the singular part $\Sigma(N,K)^{\text{sing}}$ consists of the bounded spherical functions 
that do not depend on the $\fz$-variable. 
In view of Theorem~\ref{BJR}, 
$\Sigma(N,K)^{\text{sing}}$ contains the bounded spherical functions 
associated to one-dimensional representations of $N$. 
Alternatively, one can say  that $\Sigma(N,K)^{\text{reg}}$ contains the bounded spherical functions associated to coadjoint orbits of maximal dimension. 
We also say that a bounded spherical function is regular, resp. singular, if it belongs to $\Sigma(N,K)^{\rm reg}$, resp. $\Sigma(N,K)^{\rm sing}$.

In our context, transporting the splitting from $\Sigma(N,K)$ to $\Sigma_\cD$, 
we obtain:
\begin{equation}\label{sigmasing}
\Sigma_\cD^{\text{\rm sing}}=\{\xi\in\Sigma_\cD:\xi_\Delta=0\}=\{\xi\in\Sigma_\cD:\xi_\cM=\xi_\Delta=0\}\ .
\end{equation}

Each operator $D_j$, $j=1,\ldots,d$ is homogeneous with degree denoted by $\gamma_j$.
The sublaplacian $L$ is homogeneous of degree $\gamma_1=2$. 
All the other operators of $\cL$, when present, are homogeneous of degree 4. The elements of $\cM$ are homogeneous of degree 4 for the pairs in the second block, and of degree 3 for the pairs in the third block. Finally, $\Delta$ is homogeneous of degree 4.

Then $\Sigma_\cD$ is invariant under the dilations $(\xi_1,\dots,\xi_d)\longmapsto (t^{\gamma_1}\xi_1,\dots,t^{\gamma_d})$ for $t>0$.

Since $L$ is hypoelliptic, the subelliptic estimate $\|D_jf\|_2\le C_j\|L^{\gamma_j/2}f\|_2$, for $f\in\cS(N)$, implies that
the coordinates $\xi_j$, $j=2,\ldots,d$ of the points in $\Sigma_\cD$ satisfy the following inequalities:
\begin{equation}
  \label {control_xi_j}
|\xi_j|\le C_j|\xi_1|^{\frac {\gamma_j}2}.
\end{equation}

On $\Sigma_\cD$ we then have
\begin{equation}\label{norm}
|\xi|\overset{\text {def}}=
\sum_{j=1}^d |\xi_j|^{\frac2{\gamma_j}} \cong\xi_1\ .
\end{equation}

\bigskip

\subsection{Quotient groups}\hfill

\bigskip

Let $\zeta_0\in\fz$ be a unit vector
and let $K'$ be the stabiliser of $\bR\zeta_0$ in $K$. 
Then  $K'=K'_0\rtimes\bZ_2$, where $K'_0$ is the stabiliser of $\zeta_0$ in $K$.
Set also $N'=N/\exp(\zeta_0^\perp)$, where the orthogonal complement is taken in $\fz$. 

By \cite{C}, $(N',K'_0)$ is a n.G.p., that we want to describe case by case.

\subsubsection{The pairs $(N',K'_0)$, first and second block} 
In this case $\fn$ is an $H$-type algebra. More precisely, $\fv$ admits an inner product such that \eqref{Jz} holds. Then the skew-symmetric form $\omega_{\zeta_0}(v,u)=\lan[v,u],\zeta_0\ran=\lan J_{\zeta_0}v,u\ran$ on $\fv$ is nondegenerate, and $N'$ is isomorphic to the Heisenberg group $H_s$, with $2s=\dim\fv$.

Since $J_{\zeta_0}$ is both skew-symmetric and orthogonal, it satisfies the identity $J_{\zeta_0}^2=-I$, i.e. it defines a complex structure on $\fv$.

It follows from \eqref{Jz} that $K'_0$ consists of the elements of $K$ commuting with $J_{\zeta_0}$. 
Therefore, $K'_0$ acts by complex linear transformations on $(\fv,J_{\zeta_0})$, and under the identification $N'\cong H_s$, $K'_0$ is identified with a subgroup of the unitary group ${\rm U}_s$. We keep the notation introduced in Subsection~\ref{subsec_classification}\medskip
\begin{itemize}
\item First block 
\medskip
 \begin{enumerate}
\item[Lines 1,2.] It is easy to verify that $K'_0=\Sp_n$ at line 1, 
and $K'_0={\rm U}_1\times \Sp_n$ at line 2.
\medskip
\item[Line 3.] The stabiliser of $\zeta_0\in\IM\bO$ is isomorphic to $\Spin_6\cong {\rm SU}_4$. Since it acts nontrivially on $(\fv,J_{\zeta_0})$ as a subgroup of ${\rm U}_4$, this can only be ${\rm SU}_4$.
\end{enumerate}
\item Second block: we write $\fv$ as the sum $\fv_1\oplus\fv_2$  
with $\fv_1=\set{\begin{pmatrix}v_1\\0\end{pmatrix}}$,
 $\fv_2=\set{\begin{pmatrix}0\\v_2\end{pmatrix}}$.
With the following choice of $\zeta_0$, for lines 4-6, it is easily checked that 
$\fv_1$ and $\fv_2$ are two equivalent irreducible subspaces of $\fv$  
under the action of $K'_0$
and $J_{\zeta_0}v=\zeta_0v$.
\begin{enumerate}
\item[Lines 4,5.] 
Taking $\zeta_0=\begin{pmatrix}-i&0\\0&i\end{pmatrix}$ for line 4 
and $\zeta_0=\begin{pmatrix}1&0\\0&-1\end{pmatrix}$ for line 5,
then $K'_0$ is $\rm U_1\times \rm U_1$ and $\Sp_1\times \Sp_1$ respectively, 
where the two copies of $\rm U_1$ or $\Sp_1$ 
act independently on the two subspaces $\fv_1$ and $\fv_2$. 
\item[Lines 6,7.] 
Taking $\zeta_0=\begin{pmatrix}-i&0\\0&i\end{pmatrix}$, 
 then $K'_0$ is $({\rm S)( U}_1\times {\rm U}_1)\times \SU_p$
 where $\SU_p$ acts linearly on $\fv_1$, $\fv_2$.
\medskip
\item[Lines 8,9.] We choose the same $\zeta_0$ 
as at line 2 (resp. 3).
The stabiliser of $\zeta_0$ is $\SO_2\times H'$, 
where $H'$ is the stabiliser of $\zeta_0$ for the second factor of $K$; 
hence $H'$ is $\rm U_1\times \Sp_n$ (resp. $\Spin_6$).
The complex structure $J_{\zeta_0}$ allows us to identify 
$\fv_1$ and $\fv_2$ with the same copy of $\bC^{2n}$ (resp. $\bC^4$).
The group $H'$ acts on each of $\fv_1$, $\fv_2$ like $\rm U_1\times \Sp_n$ on $\bC^{2n}$
(resp. $\SU_4$ on $\bC^4$), 
with the elements of $\SO_2\cong{\rm U}_1$ acting as intertwining operators. We denote the Hermitian product on $\bC^{2n}$ (resp. $\bC^4$) as $(\ ,\ )$.
\end{enumerate}
\end{itemize}

\subsubsection{The pairs $(N',K'_0)$, third block} 

In the other cases, the form $\omega_{\zeta_0}$ has a nontrivial radical, which has dimension $r=1$ for the groups at lines 10 and 11, and dimension $r=2$ for the pair at line 11. Then $N'$ is isomorphic to the direct product of $H_{(\dim\fv-r)/2}$ with $\bR^r$.

\medskip
\begin{enumerate}
\item[Lines 10,11.] 
We choose $\zeta_0=e_3$. 
Thus $K'_0$ is isomorphic to $\SO_2$, resp $\SU_2$, 
and, under the action of $K'_0$, $\fv$ splits as the sum of two irreducible subspaces
$\fv_1=\bK e_1\oplus \bK e_2$ and $\fv_2=\bK e_3$.
\medskip
\item[Line 12.] 
Let $(e_1,\ldots,e_7)$ be the canonical basis of $\fv=\IM \bO$.
We choose $\zeta_0=e_7$.
Then $\fv$ splits under $K'_0$ as the sum of $\fv_1=\bR e_1\oplus\ldots\oplus \bR e_6$ and $\fv_2=\bR e_7$.
Left multiplication by $e_7$ gives a complex structure on $\fv_1$,
which can then be identified with $\bC^3$, with $K'_0$ acting on it as $\SU_3$.
\end{enumerate}

\subsubsection{Hilbert bases}

Table \ref{N'}  summarises the result of the two previous subsubsections.
For each pair $(N',K'_0)$, 
it also gives a Hilbert basis $\rho'_\fv$ for the action of $K'_0$ on $\fv$. 
 A Hilbert basis $\rho'_{\fn'}$ for the action of $K'_0$ on $\fn'$ is obtained by adding the coordinate function on $\bR\zeta_0$.

\begin{table}[htdp]
\begin{center}
\begin{tabular}{| r || l | l | l | }
\hline
& $N'$ & $K'_0$  & $\rho'_{\fv}$
\\
\hline
 1&$H_{2n}$ & $\Sp_n$ & $|v|^2$ 
\\
 2&$H_{2n}$ & $\rm U_1\times\Sp_n $ & $|v|^2$ 
\\
 3&$H_4$ & $\SU_4$ & $|v|^2$ 
\\
\hline
 4&$H_2$ & $\rm U_1\times \rm U_1$ & $|v|^2\,,\,|v_1|^2 - |v_2|^2$ 
\\
5&
 $H_4$ & $\Sp_1\times \Sp_1$ & $ |v|^2\,,\,|v_1|^2 - |v_2|^2 $
\\
6&
$H_4$ & ${\rm U_1\times U_1}\times\SU_2$ & $|v|^2\,,\,\tr(v^*v)^2\,,\,|v_1|^2 - |v_2|^2 $
\\
7&
$H_{2n}\, \scriptstyle{(n\ge 3)}$ & ${\rm S(U_1\times U_1})\times\SU_n$ & $|v|^2\,,\,\tr(v^*v)^2\,,\,|v_1|^2 - |v_2|^2 $
\\
8&
$H_{4n}$ & ${\rm U_1}\times ({\rm U_1}\times\Sp_n)$ & $|v|^2\,,\,|v_1|^4\!+\!2|v_1v_2^*|^2\!+\!|v_2|^4
\,,\,|v_1|^2 |v_2|^2 - {(\RE (v_1,v_2))}^2\,,\, -2\IM (v_1,v_2)  $
\\
9&
$H_8$ & $\rm U_1\times \text{SU}_4 $ & $|v|^2\,,\,|v_1|^2 |v_2|^2\!-\! {(\RE (v_1,v_2))}^2\,,\, -2\IM (v_1,v_2)  $
\\
\hline
10&
$H_1\times\bR_{v_3}$ & $ \rm U_1 $ & $ |v|^2\, , \, v_3$ \\
11&
$H_2\times \bC_{v_3} $ & $ \SU_2 $ & $ |v|^2\, , \, \RE v_3 \, , \, \IM v_3$
\\
12&
$H_3\times \bR_{v_7}$ & $ \SU_3 $ & $  |v|^2\, , \, v_7 $\\
\hline
\end{tabular}
\end{center}
\bigskip
\caption{The quotient groups $N'$, with $K'_0$ and $\rho'_\fv$}
\label{N'}
\end{table}

\subsubsection{Differential operators}

In each of the cases under consideration, 
we take the generating system $\cD'_0=(D'_1,\ldots,D'_d)$ of $\bD(N')^{K'_0}$ obtained by symmetrisation of the Hilbert basis $\rho'$ formed by the invariants in $\rho'_\fv$ in Table \ref{N'}, with the addition of the coordinate function on $\bR\zeta_0$. Notice that the cardinality of $\rho'$ is the same cardinality $d$ of the corresponding Hilbert basis $\rho$ on $\fn$.

As for the operators of $\cD$,
each operator $D'_j$, $j=1,\ldots,d$ is homogeneous with degree denoted by $\gamma'_j$
and $D'_1$ is hypoelliptic and homogeneous of degree $\gamma'_1=2$. 
The coordinates $\eta_j$, $j=1,\ldots,d$ of the points in $\Sigma_{\cD_0'}$ satisfy the following inequalities:
\begin{equation}
  \label{control_eta_j}
|\eta_j|\le C'_j|\eta_1|_1^{\frac {\gamma'_j}2}
\end{equation}

\bigskip

\subsection{Radon transforms and structure of $\Sigma_\cD^{\text{\rm reg}}$}\hfill

\bigskip

The notation used throughout assumes that $N$ and $N'$ are identified with their Lie algebras via exponential coordinates. By $\zeta_0^\perp$ we denote the orthogonal of $\zeta_0$ in $\fz$.

Following \cite{FiRi},
for any integrable $K$-invariant function $F$ on $N$, 
we set: 
$$
\cR F(v,t)=\int_{ \zeta_0^\perp} F(v,t\zeta_0+\zeta') d\zeta'
\ , \quad (v,t)\in N'\ ,
$$

This defines a function $\cR F\in L^1(N')^{K'}$, that we call the {\it Radon transform} of $F$ (more precisely, it encodes all the Radon transforms, in the ordinary sense, of the functions $F(v,\cdot)$ on $\fz$).
The operator $\cR$ is linear, $K'$-equivariant and respects convolution.
Furthermore it maps continuously $L^1(N)^K$ to $L^1(N')^{K'}$, and $\cS(N)^K$ to  $\cS(N')^{K'}$.

We extend the definition of $\cR$ to $\bD(N)^K$ in the following way:
if $D\in \bD(N)^K$, then we define $D'=\cR D \in \bD(N')^{K'}$ by 
$$
(D'G)\circ q = D (G\circ q),
\quad G\in C^\infty_c(N'),
$$
where $q:N\rightarrow N'$ is the quotient mapping.
Easy changes of variables show:

\begin{lemma}\label{RadonD}
If $D\in\bD(N)^K$ is the symmetrisation of the polynomial $P$ on $\fn$, then $D'=\cR D$ is the symmetrisation of
the restriction $P_{|_{\fn'}}$ of $P$ to $\fn'$.
\end{lemma}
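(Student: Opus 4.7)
The plan is to verify the identity on monomials, exploiting the step-two structure of $\fn$.

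First I would fix a basis adapted to the decomposition $\fn=\fv\oplus\bR\zeta_0\oplus\zeta_0^\perp$: an orthonormal basis $\{X_1,\ldots,X_m\}$ of $\fv$, together with $\zeta_0$ and an orthonormal basis $\{\zeta_1,\ldots,\zeta_{n-1}\}$ of $\zeta_0^\perp$, and let $t$ denote the coordinate dual to $\zeta_0$ on $\fn'$. Using the exponential parametrisation and the step-two product $(v,z)\cdot(v',z')=\big(v+v',z+z'+\tfrac12[v,v']\big)$, a short BCH computation gives, for every smooth $G$ on $N'$, the three pullback formulas
$$
\tilde X_k(G\circ q)=(\tilde X'_k G)\circ q,\qquad \tilde\zeta_0(G\circ q)=(\partial_t G)\circ q,\qquad \tilde\zeta_j(G\circ q)=0\ (j\ge 1),
$$
where $\tilde X'_k$ is the left-invariant vector field on $N'$ corresponding to $X_k$. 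This is already enough to read off $\cR$ on the generators: $\cR(\tilde X_k)=\tilde X'_k$, $\cR(\tilde\zeta_0)=\partial_t$, and $\cR(\tilde\zeta_j)=0$ for $j\ge 1$; in each case the output is the left-invariant field on $N'$ arising from the restriction to $\fn'$ of the corresponding element of $\fn$.

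Next I would use that $\fn$ is step-two to conclude that each $\tilde\zeta_j$ is central in $\fU(\fn)$. Hence for any monomial $v^\alpha z^\beta\in\fS(\fn)$ the modified symmetrisation factors as
$$
\lambda'(v^\alpha z^\beta)=i^{-|\alpha|-|\beta|}\,\lambda(v^\alpha)\,\tilde\zeta_0^{\beta_0}\,\tilde\zeta_1^{\beta_1}\cdots\tilde\zeta_{n-1}^{\beta_{n-1}},
$$
with no symmetrisation required on the central factors. Because $\cR$ respects composition of commuting operators (immediate from the defining identity $(\cR D\,G)\circ q=D(G\circ q)$), applying $\cR$ to this product yields $0$ whenever some $\beta_j>0$ for $j\ge 1$, and otherwise produces $i^{-|\alpha|-\beta_0}\,\lambda(v^\alpha)\,\partial_t^{\beta_0}$ computed on $N'$, which is precisely $\lambda'\big((v^\alpha z^\beta)|_{\fn'}\big)$. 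Linearity in the monomial then yields the lemma.

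The only step requiring a genuine (if brief) computation is verifying the three pullback formulas for the generators; everything afterwards is formal bookkeeping made possible by the centrality of $\fz$ in the step-two setting.
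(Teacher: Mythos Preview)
Your argument is correct and supplies the details the paper omits (the paper says only ``Easy changes of variables show'' before stating the lemma). One small point is worth tightening: you invoke that $\cR$ ``respects composition of commuting operators'', but the defining identity actually gives full multiplicativity,
$$
(\cR(D_1D_2)G)\circ q=D_1D_2(G\circ q)=D_1\big((\cR D_2\,G)\circ q\big)=\big((\cR D_1)(\cR D_2)G\big)\circ q,
$$
with no commutativity hypothesis on $D_1,D_2$. You implicitly use this stronger fact when you assert that $\cR$ takes $\lambda(v^\alpha)$ on $N$ to $\lambda(v^\alpha)$ on $N'$, since $\lambda(v^\alpha)$ is an average of \emph{non-commuting} words in the $\tilde X_k$; commuting multiplicativity alone would not suffice here. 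Equivalently, your three pullback formulas can simply be iterated: $\tilde X_{k_1}\cdots\tilde X_{k_p}(G\circ q)=(\tilde X'_{k_1}\cdots\tilde X'_{k_p}G)\circ q$. Once this is noted, your argument shows cleanly that $\cR:\fU(\fn)\to\fU(\fn')$ is the algebra homomorphism induced by the Lie-algebra quotient $\fn\to\fn/\zeta_0^\perp$, and the compatibility with symmetrisation is then automatic.
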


We recall that, by \cite{C}, $(N'K')$ and $(N',K'_0)$ are n.G.p. We say that a bounded spherical function for $(N',K')$ or $(N',K'_0)$ is regular if it is associated to an infinite-dimensional representation of $N'$. 

By the rank-one action of $K$ on $\fz$, every irreducible unitary representation of $N$ factors through a representation of a conjugate of $N'$ under $K$. Via Theorem \ref{BJR}, this implies that every bounded spherical function for $(N,K)$ can be expressed as
\begin{equation}
  \label{spherical'}
\ph^\sharp(v,z)=\int_K\ph(kv, \lan kz,\zeta_0\ran)\,dk\ ,  
\end{equation}
 and that this is a 1-to-1 correspondence between $\Sigma(N',K')^{\rm reg}$ and $\Sigma(N,K)^{\rm reg}$.

It is convenient to regard $\cR$ as an operator from $\cR:L^1(N)^K$ to $L^1(N')^{K'_0}$, disregarding the extra invariance under $\bZ_2$ at this stage.

A $K'_0$-bounded spherical function on $N'$ has the form
\begin{equation}\label{lambda}
\psi(v,t)=\psi_0(v)e^{i\la t}\ ,
\end{equation}
with $\la\ne0$ if and only if $\psi$ is regular. 
Choose $w\in K'\setminus K'_0$
where $w(v,t)=\big(w_0 (v),-t\big)$ . Then also
$$
\psi^w(v,t)=\psi\circ w(v,t)=(\psi_0\circ w_0)(v)e^{-i\la t}
$$
is a $K'_0$-bounded spherical function and the map
$$
\psi\longmapsto \half(\psi+\psi^w)
$$
is a 2-to-1 correspondence between $\Sigma(N',K'_0)^{\rm reg}$ and $\Sigma(N',K')^{\rm reg}$ (cf. Section \ref{reduction}).

Denote by $\Sigma(N',K'_0)^{\rm reg, +}$ the subset of $\Sigma(N',K'_0)^{\rm reg}$ consisting of the functions \eqref{lambda} with $\la>0$. Then $\Sigma(N',K'_0)^{\rm reg, +}$ is open in $\Sigma(N',K'_0)$ and 
$\Sigma(N',K'_0)^{\rm reg}$ is the disjoint union of $\Sigma(N',K'_0)^{\rm reg, +}$ and $w\Sigma(N',K'_0)^{\rm reg, +}$.

We denote by $\cG'_0$ the Gelfand transform for $(N',K'_0)$. The following statement summarises our previous remarks.

\begin{proposition}\label{spherical_N_N'}
If $\psi$ is a bounded spherical function in $(N',K'_0)$,
then \eqref{spherical'}
defines a bounded spherical function $\psi^\sharp$ for $(N,K)$. For $F\in L^1(N)^K$,
\begin{equation}\label{integrals}
\int_N F(v,z)\psi^\sharp(-v,-z)\,dv\,dz=\int_{N'}\cR F(v,t)\psi(-v,-t)\,dv\,dt\ ,
\end{equation}
i.e., $\cG F(\psi^\sharp)=\cG'_0 (\cR F) (\psi)$. 

For any $D\in \bD(N)^K$, 
$\cR D$ and $D$ share the same eigenvalue on $\psi$ and $\psi^\sharp$ respectively.
 In particular, $\psi^\sharp$ is regular if and only if $\psi$ is regular.  

Moreover, $(\psi^w)^\sharp=\psi^\sharp$ and the map $\psi\longmapsto\psi^\sharp$ is a bijection from $\Sigma(N',K'_0)^{\rm reg, +}$ onto $\Sigma(N,K)^{\rm reg}$.
\end{proposition}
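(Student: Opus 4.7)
The plan is to verify the four assertions in order, with the main tool being the lift $\bar\psi=\psi\circ q$ of $\psi$ to $N$ along the quotient $q:N\to N'$. First I would establish the integral identity \eqref{integrals}: substituting \eqref{spherical'} into the left-hand side, swapping the $K$-integration with integration on $N$ by Fubini, and using $K$-invariance of $F$ together with the $K$-invariance of Lebesgue measure on $\fn$ to absorb the $K$-average, the left-hand side collapses to $\int_N F(v,z)\,\psi(-v,-\lan z,\zeta_0\ran)\,dv\,dz$; splitting $z=t\zeta_0+\zeta'$ with $\zeta'\in\zeta_0^\perp$ and integrating first in $\zeta'$ then produces $\cR F$.

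Next, I would prove that $\psi^\sharp$ is a bounded $K$-spherical function with $\mu_D(\psi^\sharp)=\mu_{\cR D}(\psi)$ for every $D\in\bD(N)^K$. Writing $\psi^\sharp(x)=\int_K \bar\psi(k^{-1}x)\,dk$ makes smoothness, boundedness, $K$-invariance and $\psi^\sharp(0)=1$ immediate. For the eigenfunction property, $\cR D$ lies in $\bD(N')^{K'_0}$ by Lemma \ref{RadonD} (the restriction to $\fn'$ of a $K$-invariant polynomial on $\fn$ is $K'_0$-invariant), and the defining identity for $\cR D$ gives $D\bar\psi=((\cR D)\psi)\circ q=\mu_{\cR D}(\psi)\,\bar\psi$; because the left-invariant, $K$-invariant operator $D$ commutes with both $k^{-1}\cdot$ and the $K$-average, this upgrades to $D\psi^\sharp=\mu_{\cR D}(\psi)\,\psi^\sharp$, simultaneously proving sphericity and the eigenvalue equality. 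In particular the regularity of $\psi^\sharp$ (detected by $\mu_\Delta$) is equivalent to that of $\psi$.

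Finally, $(\psi^w)^\sharp=\psi^\sharp$ follows from the substitution $k\mapsto wk$ in the $K$-integral: the rank-one hypothesis on $\fz$ provides some $\tilde w\in K$ with $\tilde w\zeta_0=-\zeta_0$ (the generic $K$-orbits on $\fz$ are spheres), and up to multiplication by an element of $K'_0$ we may take $\tilde w=w$. Injectivity of $\psi\mapsto\psi^\sharp$ on $\Sigma(N',K'_0)^{\rm reg,+}$ follows from the eigenvalue matching together with Theorem \ref{embedding} applied to $(N',K'_0)$, with the sign condition $\la>0$ singling out one element from each pair $\{\psi,\psi^w\}$. For surjectivity, any regular $\varphi=\psi_{\pi,\sigma}\in\Sigma(N,K)^{\rm reg}$ has $\pi$ infinite-dimensional with central character a nonzero element of $\fz^*$; the rank-one hypothesis forces its $K$-orbit to meet $\bR\zeta_0^*$, so after replacing $\pi$ by some $\pi^k$ we may assume $\pi=\pi'\circ q$, and the stabiliser coincidence $K_\pi=K'_0=K'_{0,\pi'}$ (as each equals the stabiliser of the central character of $\pi$) matches the BJR data to produce $\varphi=(\psi_{\pi',\sigma})^\sharp$. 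The main obstacle I anticipate is this last step: verifying cleanly that the BJR parametrisations of $\Sigma(N,K)^{\rm reg}$ and $\Sigma(N',K'_0)^{\rm reg}$ transport to one another under the lift; the rank-one property of the $K$-action on $\fz$ is the essential geometric input.
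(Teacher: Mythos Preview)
Your proposal is correct and tracks the paper closely: the proposition is stated there as a summary of the discussion immediately preceding it, so there is no separate formal proof to compare against, and your explicit verifications of the integral identity via Fubini and of the eigenvalue relation $\mu_D(\psi^\sharp)=\mu_{\cR D}(\psi)$ via the lift $\bar\psi=\psi\circ q$ simply fill in what the paper leaves implicit.

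The one place you diverge is the bijection. The paper obtains both injectivity and surjectivity at once from Theorem~\ref{BJR}: since every irreducible unitary representation of $N$ factors through a $K$-conjugate of $N'$, the map $\ph\mapsto\ph^\sharp$ is already declared a 1--1 correspondence $\Sigma(N',K')^{\rm reg}\leftrightarrow\Sigma(N,K)^{\rm reg}$, and the passage to $\Sigma(N',K'_0)^{\rm reg,+}$ is then just the splitting of the $\bZ_2$-action. You instead argue injectivity through the eigenvalue matching and Theorem~\ref{embedding}. That route is fine, but it tacitly uses that the operators $\cR D$, $D\in\bD(N)^K$, separate the bounded $K'_0$-spherical functions on $N'$ (equivalently, that $\cR\cD$ contains a generating system for $\bD(N')^{K'_0}$). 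This is true, and the paper establishes exactly this fact immediately \emph{after} the proposition by comparing Tables~\ref{invariants} and~\ref{N'} and concluding $\cD'_0=\cR\cD$; you should flag it as an input rather than leave it implicit. Your surjectivity sketch via matching the BJR data (in particular $K_\pi=K'_0=K'_{0,\pi'}$, which holds because the central character already forces $k\in K'_0$ and, in the third-block cases, $K'_0$ acts trivially on the abelian factor $\fv_2$ of $N'$) is the same mechanism the paper invokes.
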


Comparing the restriction to $\fn'$ of the elements of $\rho$ with the elements of $\rho'$, cf. Tables \ref{invariants} and \ref{N'}, we see that every element of $\rho'$ is the restriction to $\fn'$ of an element of $\rho$. If $\cD$, $\cD'$ are the corresponding systems of differential operators, it follows that $\cD'=\cR\cD$.

In the next statement the map $\Theta$ is defined so that $\Theta\inv$ is the same as the map $\ph\mapsto\ph^\sharp$ in \eqref{spherical'} for the embedded copies of the spectra in $\bR^d$. The notation is set so that $\Sigma_\cD\cong\Sigma(N,K)$ is in the $\xi$-space, while $\Sigma_{\cD'_0}\cong\Sigma(N',K'_0)$ is in the $\eta$-space. The coordinates $\xi_1,\eta_1$ represent the eigenvalues of the sublaplacians on $N,N'$ resp., while $\xi_d$, resp. $\eta_d$, represent  the eigenvalues of the central Laplacian on $N$, resp. the derivative in the direction of $\bR\zeta_0$.

\begin{lemma}\label{Sigmas}
Let $\Theta: \bR^{d-1}\times \bR\backslash \{0\} \rightarrow \bR^d$
be given by
$\Theta(\xi_1,\ldots,\xi_d)=(\eta_1,\ldots,\eta_d)$, where  $\eta_1=\xi_1$, $\eta_d=\sqrt\xi_d$ in all cases, and:
\begin{equation}\label{Theta}
\begin{matrix}
\eta_2= \xi_2 \xi_3^{-\frac12} & & &\text{\rm at lines } 4,5,10,11,12&(d=3)\hfill\\
\eta_2=\xi_2\hfill& \eta_3= \xi_3 \xi_4^{-\frac12} & &\text{\rm at lines } 6,7,9\hfill&(d=4)\hfill\\
\eta_2=\xi_2\hfill& \eta_3=\xi_3\hfill&\eta_4= \xi_3 \xi_5^{-\frac12} & \text{\rm at line } 8\hfill&(d=5)\hfill\\
\eta_2=\xi_2 \xi _4^{-\frac 12}& \eta_3= \xi_3 \xi _4^{-\frac 12} && \text{\rm at line } 11\hfill&(d=4)\ .
\end{matrix}
\end{equation}


Then $\Theta$ maps homeomorphically $\Sigma_\cD^{\text{\rm reg}}$ 
onto $\Sigma_{\cD'_0}^{\text{\rm reg},+}$.
\end{lemma}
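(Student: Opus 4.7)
The plan is to realise $\Theta$ as the coordinate expression, on the embedded spectra, of the bijection $\psi\mapsto\psi^\sharp$ from Proposition~\ref{spherical_N_N'}, and to read off the homeomorphism property from the explicit formulas. Proposition~\ref{spherical_N_N'} already gives a bijection $\Sigma(N',K'_0)^{\mathrm{reg},+}\to\Sigma(N,K)^{\mathrm{reg}}$ together with the eigenvalue transport $\mu_D(\psi^\sharp)=\mu_{\cR D}(\psi)$ for every $D\in\bD(N)^K$. Combining this with Lemma~\ref{RadonD}, for each $D_j=\la'(P_j)\in\cD$ we have $\xi_j=\mu_{\la'(P_j|_{\fn'})}(\psi)$, so the task reduces to rewriting $\la'(P_j|_{\fn'})$ in terms of the generators of $\cD'_0$ and evaluating on a bounded spherical function $\psi(v,t)=\psi_0(v)e^{i\lambda t}$, on which $T:=\la'(t)=-i\partial_t$ acts with eigenvalue $\lambda=\eta_d>0$.

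Two structural facts drive the computation. First, a case-by-case inspection of Tables~\ref{invariants} and~\ref{N'} shows that each $P\in\rho$ restricts to $\fn'$ as a product $t^k Q(v)$, where $t$ is the coordinate on $\bR\zeta_0$ and $Q$ is one of the generators in $\rho'_{\fv}$: a pure $\fv$-invariant in $\rho_\fv$ restricts to itself (which is one of the elements of $\rho'_\fv$), the central $|z|^2$ restricts to $t^2$, and every mixed $P\in\rho_{\fv,\fz}$ restricts to $t\cdot Q(v)$ with $Q\in\rho'_\fv$. Second, since $T=\la'(t)$ is central in $\bD(N')$, the definition of the symmetrisation yields $\la'(t^k Q(v))=T^k\la'(Q(v))$ as operators on $N'$; evaluated on $\psi$, this gives $\mu_{\la'(t^k Q)}(\psi)=\lambda^k\mu_{\la'(Q)}(\psi)=\lambda^k\eta_k$, where $\eta_k$ is the coordinate of $\psi\in\Sigma_{\cD'_0}$ attached to the $\rho'_\fv$-generator $Q$.

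Putting these together across all cases yields $\xi_1=\eta_1$, $\xi_d=\eta_d^2$, $\xi_j=\eta_j$ for each pure $\fv$-generator beyond $L$, and $\xi_j=\eta_d\eta_k$ for each mixed generator paired with its $\rho'_\fv$-partner $\eta_k$. Inverting these relations — possible precisely because $\eta_d>0$ on $\Sigma_{\cD'_0}^{\mathrm{reg},+}$, equivalently $\xi_d=\eta_d^2>0$ on $\Sigma_\cD^{\mathrm{reg}}$ — reproduces the four families of formulas in~\eqref{Theta}, and makes both $\Theta$ and $\Theta^{-1}$ continuous, since they involve only polynomial operations and a single square root of a strictly positive quantity. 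The main technical content of the proof is the case-by-case verification of the restriction identities for the mixed generators: one has to reconcile the notational conventions of Table~\ref{invariants} (notably $i\tr(v^*zv)$ at lines~6,7 and $\RE((v_1v_2^*-v_2v_1^*)z)$ at lines~8,9) with the specific $\zeta_0$ fixed in each case and with the identifications underlying $\rho'_\fv$ in Table~\ref{N'}, in particular the complex structure $J_{\zeta_0}$ at lines~8,9 that identifies $\bH^n\cong\bC^{2n}$ or $\bO\cong\bC^4$, and the separate bookkeeping at line~11 that matches $\RE(v^Tz)$, $\IM(v^Tz)$ with $\RE v_3$, $\IM v_3$.
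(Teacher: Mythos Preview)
Your proposal is correct and follows exactly the approach the paper intends: the paragraph immediately preceding the lemma states that $\Theta^{-1}$ is, by construction, the map $\psi\mapsto\psi^\sharp$ of Proposition~\ref{spherical_N_N'} read in coordinates, and that $\cD'=\cR\cD$ because each element of $\rho'$ is the restriction to $\fn'$ of an element of $\rho$. The paper gives no further proof, so your detailed verification via Lemma~\ref{RadonD}, the factorisation $\la'(t^kQ)=T^k\la'(Q)$ for central $T$, and the case-by-case matching of Tables~\ref{invariants} and~\ref{N'} is precisely what is being left to the reader.
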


\vskip1cm

\section{A Geller-type formula with remainder term}
\label{a_geller_type_lemma}

\bigskip

We begin now the proof of Theorem \ref{main}. As we have poined out in the Introduction, it suffices to prove \eqref{control}. We have also commented that we treat all pairs in Table \ref{list} in a unified way. For the pairs in the first block a separate simpler proof is possible, based on Theorem \ref{normal} (but with a variant for the pair at line 3).

Due to the absence of mixed invariants, some of the statements below are either vacuous or considerably simpler to prove if specialised to pairs in the first block.

Proposition \ref{derivatives} below is our key tool. 
It is a (less refined) analogue of Theorem 2.2 in \cite{ADR1}, Proposition 7.2 in \cite{ADR2} and Proposition 5.2 in \cite{FiRi}. 
The structure of the result is derived from \cite{G}. 
It hints that, for $F\in\cS(N)^K$, $\cG F$ has some kind of Taylor development at $\Sigma_\cD^{\text{sing}}$.

\begin{proposition}\label{derivatives}
Let $(N,K)$ be one of the pairs of Table \ref{list}.
Given $F\in\cS(N)^K$, there exist functions $g_{jk}\in\cS(\bR^{d_\fv})$, $j\in\bN^{\#\cM}$, $k\in\bN$, such that,  for every $n\in\bN$,
\begin{equation}\label{geller}
F=\sum_{|j|+2k\le n}\frac1{j!k!}\cM^j\Delta^kg_{jk}(\cL)\del_0+R_n\ ,
\end{equation}
with $R_n\in\cS(N)^K$,
\begin{equation}\label{remainder}
R_n=\sum_{|\al|=n+1}\de_z^\al R'_{\al}\ ,
\end{equation}
with $R'_\al\in\cS(N)$. Moreover, each $g_{jk}$ and $R'_\al$ depends linearly and continuously on $F$.
\end{proposition}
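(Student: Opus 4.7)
I would approach the proof via Taylor expansion in the $z$-variable, performed on the partial Fourier transform side. Write $\hat F(v,\lambda)=\int_\fz F(v,z)e^{-i\lambda\cdot z}\,dz$, which is Schwartz in $(v,\lambda)$. The first ingredient is a Hadamard-type division in the Schwartz class: a function $R\in\cS(N)$ admits a representation $R=\sum_{|\alpha|=n+1}\partial_z^\alpha R'_\alpha$ with $R'_\alpha\in\cS(N)$ if and only if $\hat R(v,\lambda)$ vanishes to order $n+1$ at $\lambda=0$, Schwartz-uniformly in $v$. The problem thus reduces to choosing $g_{jk}\in\cS(\bR^{d_\fv})$ so that, setting $S_n:=\sum_{|j|+2k\le n}\frac{1}{j!k!}\cM^j\Delta^k g_{jk}(\cL)\delta_0$, the partial FTs of $F$ and of $S_n$ have matching $\lambda$-Taylor expansions at $\lambda=0$ up to order $n$.

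On the partial-FT side the structure of the building blocks is as follows: $\Delta$ acts by multiplication by $|\lambda|^2$; and the mixed operators $M\in\cM$, coming from polynomials linear in $z$ (Table \ref{invariants}), have $\lambda$-symbols that are linear in $\lambda$ at leading order, with coefficients given by $v$-differential operators built from the $\lambda$-twisted left-invariant vector fields $X_i^\lambda=\partial_{v_i}+\frac{i}{2}\sum c_{ij}^k v_j\lambda_k$ on $\fv$. Consequently, $\widehat{\cM^j\Delta^k g_{jk}(\cL)\delta_0}(v,\lambda)$ has a $\lambda$-expansion starting at order $|j|+2k$. Moreover, Proposition \ref{hulanicki} gives $g_{jk}(\cL)\delta_0\in\cS(N)^K$ whenever $g_{jk}\in\cS(\bR^{d_\fv})$, and its partial FT at $\lambda=0$ equals the Euclidean inverse FT of $g_{jk}\circ\rho_\fv$; by Theorem \ref{Schwarz-Mather} (Schwartz--Mather), this surjects onto $\cS(\fv)^K$ with a continuous right inverse.

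With these ingredients, I would construct the $g_{jk}$ by induction on $m=|j|+2k$. At $m=0$, match $\widehat{g_{00}(\cL)\delta_0}(v,0)=\hat F(v,0)$: the right-hand side is $K$-invariant and Schwartz in $v$, so Schwartz--Mather supplies $g_{00}$ continuously in $F$. At each subsequent order $m\le n$, the $\lambda^\alpha$-coefficients ($|\alpha|=m$) of $\hat F$ minus the contributions already pinned down at previous steps form a $K$-equivariant family of Schwartz functions of $v$ (taking values in degree-$m$ polynomials in $\lambda$); the new terms with $|j|+2k=m$ must reproduce this family via their leading $\lambda^m$ parts. This becomes a linear problem (with differential-operator coefficients in $v$ induced by the $X^\lambda$-twist) for the $\lambda=0$ values $\widehat{g_{jk}(\cL)\delta_0}(v,0)$, which are then lifted to $g_{jk}$ via Schwartz--Mather. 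Continuity of the partial FT, of Schwartz--Mather extension, and of Hadamard division give continuous dependence of $g_{jk}$ and $R'_\alpha$ on $F$.

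The main obstacle is the solvability of the matching at each order $m$: the principal $\lambda$-symbols of $\{\cM^j\Delta^k\}_{|j|+2k=m}$ must span the required $K$-equivariant range. For the first block of Table \ref{list}, where no mixed operators are present, this reduces to the elementary fact that $|z|^{2k}$ generates the $K$-invariants of $\fz$ of even degree, and the odd-degree $\lambda$-Taylor coefficients of $\hat F$ vanish by $K$-invariance of $F$. For the second and third blocks, one must verify case by case, using the explicit generators in Table \ref{invariants}, that the mixed polynomials together with $|z|^2$ yield all needed $K$-invariant combinations (in particular the odd $\lambda$-orders produced by the linear-in-$z$ mixed invariants); careful bookkeeping of the subleading $v$-dependent corrections from the $\lambda$-twists in the vector fields on $\fv$ is then the main technical task.
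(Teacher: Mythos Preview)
Your plan is essentially the same strategy the paper uses: pass to the Fourier side, match Taylor coefficients in the central variable order by order, extract the $g_{jk}$ via Schwarz--Mather, and control the discrepancy between left-invariant and constant-coefficient operators as a lower-order error. Two differences are worth noting.

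First, the paper takes the \emph{full} Euclidean Fourier transform on $\fn=\fv\oplus\fz$ rather than only the partial transform in $z$. This turns the symmetrised mixed invariants into pure multiplication by $\rho_{\fv,\fz}(\omega,\zeta)^j|\zeta|^{2k}$, so that the matching at each order becomes an algebraic statement about $K$-invariant functions on $\fv\oplus\fz$ vanishing at $\zeta=0$; the $\lambda$-twists you carry around are instead isolated in a separate lemma (their Lemma~\ref{replace_constant_coeff_op}) stating that $\cM^j-\rho_{\fv,\fz}(i^{-1}\nabla_\fv,i^{-1}\nabla_\fz)^j$ is a sum of terms with $|j|+1$ derivatives in $z$, hence harmless at the current order. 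This makes the bookkeeping you anticipate essentially disappear.

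Second, and more importantly, the paper does \emph{not} verify the solvability you flag as the main obstacle ``case by case''. It proves a single uniform Lemma~\ref{vanishing}: if $u\in\cS(\fn)^K$ vanishes to order $n$ at $\zeta=0$, then $u=\sum_{(j,k)\in E_n}\rho_{\fv,\fz}^j|\zeta|^{2k}\,h_{jk}\circ\rho$ with Schwartz $h_{jk}$ depending continuously on $u$. The inductive step hinges on an elementary fact (Lemma~\ref{basis_polynomial_sphere}): for fixed nonzero $a$, the functions $\langle a,\tau\rangle^j$ (resp.\ $(a\mid\tau)^{j_1}\overline{(a\mid\tau)}^{j_2}$ for line~11) are linearly independent on the unit sphere in $\fz$. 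Writing the mixed invariant as $p(\omega,\zeta)=\langle a(\omega),\zeta\rangle$, differentiating $n{+}1$ times in $\zeta$ at $0$, and using $K$-invariance to rotate the direction of differentiation over the whole sphere forces each $h_{jk}(\rho_\fv(\omega),0,0)=0$, which feeds the Hadamard step. Your outline would go through once you replace the projected case analysis by this argument; it is the only genuinely nontrivial point, and it is exactly the piece your proposal leaves open.
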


Notice that for the pairs in the first block, $\cM$ is empty, so that \eqref{geller} only contains the sum over $k$.

\bigskip

\subsection{Tools for the proof of Proposition~\ref{derivatives}}\hfill

\bigskip
Let us set first some notation for the pairs in the second and third block of Table \ref{list}.
For each $n\in \bN$, we denote by $E_n$ the set consisting of the following pairs:
\begin{equation}\label{E_n}
  \begin{cases}
(2m+1, 0) , (2m-1,1) , \ldots , (1,m) , (0, m+1) 
& \text{if }n=2m\\
(2m+2,0) , (2m,1) , \ldots , (0,m+1)
& \text{if }\, n=2m+1\ ,
      \end{cases}
 \end{equation}
in all cases but line 11, where we define 
$E_n$ as the set of triples $(j,k)=(j_1,j_2,k)\in \bN^3$ 
such that $(|j|,k)$ runs over the set given in \eqref{E_n}.

\begin{lemma}\label{vanishing}
Let $n\in \bN$ be fixed. With $(\omega,\zeta)$ denoting elements of $\fv\times\fz$, suppose that  $u(\omega,\zeta)\in\cS(\fn)^K$ vanishes at $\zeta=0$ with all derivatives in $\zeta$ up to order $n$.
Then
\begin{equation}\label{expansion}
u(\omega,\zeta)=\sum_{(j,k)\in E_n}\rho_{\fv,\fz}(\omega,\zeta)^j |\zeta|^{2k}h_{jk}\big(\rho(\omega,\zeta)\big)
\end{equation}
where the $h_{jk}\in\cS(\fn)^K$ depend linearly and continuously on $u$.

For a pair in the first block, \eqref{expansion} reduces to
$$
u(\omega,\zeta)=|\zeta|^{2[n/2]+1}h_n(\omega,\zeta)\ ,
$$
with $h_n\in\cS(\fn)^K$ depend linearly and continuously on $u$.
\end{lemma}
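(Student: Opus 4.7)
The plan is to combine the Schwarz--Mather extension theorem (Theorem \ref{Schwarz-Mather}) with a multivariable Hadamard-type Taylor expansion on the spectral side. Using the Hilbert basis $\rho=(\rho_\fv,\rho_{\fv,\fz},|\zeta|^2)$ of Table \ref{invariants}, I first write $u=U\circ\rho$ with $U\in\cS(\bR^d)$ depending linearly and continuously on $u$, and split the coordinates on $\bR^d$ as $\xi=(x,y,s)$ corresponding to the three factors. Iterated Hadamard at the origin in the $(y,s)$-directions yields, for any integer $M\geq0$,
\[
U(x,y,s)=\sum_{|j|+2k\le M}\frac{y^j s^k}{j!\,k!}U_{jk}(x)+\sum_{|j|+2k=M+1}y^j s^k V_{jk}(x,y,s),
\]
with $U_{jk}\in\cS(\bR^{d-\#\cM-1})$ and $V_{jk}\in\cS(\bR^d)$, all depending linearly and continuously on $U$; pulling back decomposes $u$ into $\rho_{\fv,\fz}^j\,|\zeta|^{2k}$-terms with Schwartz coefficients.

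Next, the vanishing hypothesis will be used to kill the low-order Taylor contributions. Since $\rho_{\fv,\fz}$ is linear in $\zeta$ and $|\zeta|^{2k}$ is homogeneous of degree $2k$, the monomial $\rho_{\fv,\fz}^j|\zeta|^{2k}$ is homogeneous of degree $|j|+2k$ in $\zeta$. The vanishing of $u$ to order $n+1$ in $\zeta$ thus forces, at each $\zeta$-homogeneity $m\le n$, the polynomial $\sum_{|j|+2k=m}U_{jk}(\rho_\fv(\omega))\,\rho_{\fv,\fz}(\omega,\zeta)^j|\zeta|^{2k}$ to vanish identically in $\zeta$ for every $\omega$. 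At a generic $\omega$ (where the differential of $\rho_{\fv,\fz}(\omega,\cdot):\fz\to\bR^{\#\cM}$ has full rank) these monomials are linearly independent in the space of $\zeta$-polynomials; combined with the density of $\rho_\fv(\fv)$ in its image, this forces $U_{jk}$ to vanish on the image of $\rho_\fv$ for $|j|+2k\le n$, and after modifying $U_{jk}$ outside the image (which leaves $u$ unchanged) we may take $U_{jk}\equiv 0$ for $|j|+2k\le n$, leaving a sum indexed by $(j,k)$ with $|j|+2k\in\{n+1,n+2\}$.

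The final step is to contract this index set to exactly $E_n$. For the first block of Table \ref{list}, $\cM$ is empty, so only $j=0$ terms appear; a one-variable Hadamard in $s$ gives $U(x,s)=s^{[n/2]+1}W(x,s)$ for a Schwartz $W$, hence $u=|\zeta|^{2[n/2]+2}h_n$ with $h_n=W(\rho_\fv,|\zeta|^2)\in\cS(\fn)^K$. For the second and third blocks, I would exploit invariant-theoretic identities relating $\rho_{\fv,\fz}^2$ to $\rho_\fv\cdot|\zeta|^2$ modulo higher $\zeta$-order terms (e.g.\ $(\trans v z)^2+|v\wedge z|^2=|v|^2|z|^2$ at line~10, with $|v\wedge z|^2$ itself polynomial in the Hilbert basis), which let one trade pairs of $\rho_{\fv,\fz}$ factors for $|\zeta|^2$ factors at the cost of higher $\zeta$-order terms. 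Iterating this collapses the index set at level $n+1$ into the parity pattern of $E_n$ (odd $|j|$ for $n$ even, even $|j|$ for $n$ odd), with the boundary pair $(0,(n+2)/2)$ absorbing the pure $|\zeta|^{n+2}$ contribution that cannot be built from level-$(n+1)$ terms of the requisite parity.

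The main obstacle is precisely this last algebraic reduction to $E_n$: it requires the case-specific structure of the mixed invariants listed in Table \ref{invariants}, and is most delicate at line~11, where $\cM$ has two generators and $E_n$ is defined via the total degree $|j|$. A uniform treatment can be organised via the quotient-group construction of Section \ref{reduction} and the Radon transform $\cR$ of Proposition \ref{spherical_N_N'}: pulling back the simpler invariant theory on $\fn'$ produces the identities needed on $\fn$, and the behaviour under the action of $w\in K'\setminus K'_0$ (which sends $\zeta_0\to-\zeta_0$) accounts for the parity constraint on $|j|$ in the $E_n$ pattern.
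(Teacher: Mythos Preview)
Your overall plan---Schwarz--Mather extension, Hadamard in the $(y,s)=(\xi_\cM,\xi_\Delta)$ directions, then a linear-independence argument to kill the low-order coefficients---uses the same three ingredients as the paper. The paper, however, organises them \emph{inductively} on $n$: it proves the case $n=0$ directly (modify $U$ so that $U(\xi_\cL,0,0)\equiv0$, then one-step Hadamard gives $U=\xi_\cM h_1+\xi_\Delta h_2$), and in the inductive step it differentiates \eqref{expansion} $n{+}1$ times in $\zeta$, uses Lemma~\ref{basis_polynomial_sphere} to show $h_{jk}(\rho_\fv(\omega),0,0)=0$ for the indices at level $n{+}1$, and reapplies the $n=0$ case to each such $h_{jk}$. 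This moves each index $(j,k)$ to $(j{+}1,k)$ and $(j,k{+}1)$, which is exactly how $E_n$ becomes $E_{n+1}$. No invariant-theoretic identities are involved; the shape of $E_n$ is purely a Hadamard artefact.

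Your direct approach has two genuine gaps. First, the weighted Hadamard formula you write is false: take $M=0$, where your remainder has only terms with $|j|=1$, $k=0$, forcing $U(x,0,s)=U(x,0,0)$ for all $s$. A correct version (expand in $s$ first to order $\lfloor n/2\rfloor$, then in $y$) produces a remainder indexed \emph{exactly} by $E_n$---including the extra $(0,\lfloor n/2\rfloor{+}1)$ term when $n$ is even---so the ``final reduction'' you worry about is simply not there. Second, and more seriously, in that expansion the coefficients $V_{jk}(x,y)$ do not depend on $s$; hence $V_{jk}\circ(\rho_\fv,\rho_{\fv,\fz})$ is \emph{not} Schwartz on $\fn$ (take $\omega=0$, $|\zeta|\to\infty$), so you do not obtain $h_{jk}\in\cS(\fn)^K$. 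The paper avoids this by working throughout with functions in $\cS(\bR^d)$ of the full variable $\xi=(\xi_\cL,\xi_\cM,\xi_\Delta)$, via the modification-then-Hadamard trick at each inductive step.

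Finally, the algebraic route you sketch for collapsing to $E_n$ does not work: identities like $(\trans vz)^2=|v|^2|z|^2-|v\wedge z|^2$ do not express $\rho_{\fv,\fz}^2$ as $\rho_\fv\cdot|\zeta|^2$ plus \emph{higher} $\zeta$-order terms (both sides are homogeneous of $\zeta$-degree $2$), and the parity pattern of $E_n$ you attribute to the $w$-action is just the triviality that $j+2k=n{+}1$ fixes the parity of $j$. If you want a non-inductive proof, the clean fix is to mimic the paper's modification step globally: subtract from $U$ a Schwartz function with the prescribed jets $U_{jk}$ (times a cutoff in $(\xi_\cM,\xi_\Delta)$), check that the subtraction vanishes after composing with $\rho$, and then apply an \emph{unweighted} iterated Hadamard to the difference; this keeps all remainders in $\cS(\bR^d)$ and lands on $E_n$ without further reduction.
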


The proof of Lemma~\ref{vanishing} is based on the following two simple lemmas, the first of them being an adapted version of Hadamard's lemma.

\begin{lemma}\label{hadamard}
Let  $h\in \cS(\bR_x^n\times\bR_y^m)$.
If $h$ vanishes at each point $(x,0)$ with all derivatives up to order $k$,
then there exist $h_\alpha\in \cS(\bR^n\times\bR^m)$, $\alpha\in \bN^m$, $|\al|=k+1$,
depending linearly and continuously on $h$, such that
$$
h(x,y)=\sum_{\alpha} y^\alpha h_\alpha(x,y)\ .
$$ 
\end{lemma}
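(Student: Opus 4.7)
The plan is to construct $h_\alpha$ by a two-part decomposition: a Taylor expansion near $y=0$ and a direct division by $y^\alpha$ away from $y=0$, glued by a smooth cutoff. Fix $\tilde\chi\in C_c^\infty(\bR^m)$ with $\tilde\chi=1$ in a neighbourhood of $0$, and split $h=\tilde\chi h+(1-\tilde\chi)h$.

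For the piece $\tilde\chi h$, Taylor's theorem with integral remainder in the $y$-variable (treating $x$ as a parameter), combined with the vanishing of $\partial_y^\beta h(x,0)$ for $|\beta|\le k$, yields
\[
h(x,y)=\sum_{|\alpha|=k+1}y^\alpha g_\alpha(x,y),\qquad g_\alpha(x,y):=\frac{k+1}{\alpha!}\int_0^1(1-t)^k\partial_y^\alpha h(x,ty)\,dt.
\]
Differentiation under the integral together with the Schwartz decay of every derivative of $h$ shows that $g_\alpha$ is smooth and Schwartz in $x$ uniformly on compact sets in $y$. Setting $\tilde g_\alpha:=\tilde\chi\,g_\alpha$ therefore produces functions that are smooth, compactly supported in $y$, and Schwartz in $x$, so $\tilde g_\alpha\in\cS(\bR^n\times\bR^m)$; moreover $\sum_{|\alpha|=k+1}y^\alpha\tilde g_\alpha=\tilde\chi h$.

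For the piece $(1-\tilde\chi)h$, I invoke the identity $|y|^{2(k+1)}=\sum_{|\alpha|=k+1}\binom{k+1}{\alpha}y^{2\alpha}$, which shows that $\bR^m\setminus\{0\}$ is covered by the open cones $\Omega_\alpha=\{y:|y^\alpha|\ge c|y|^{k+1}\}$ for some $c>0$ depending only on $m$ and $k$. Take a smooth $0$-homogeneous partition of unity $\{\psi_\alpha\}_{|\alpha|=k+1}$ on $\bR^m\setminus\{0\}$ subordinate to this covering. On $\supp\psi_\alpha$ the function $\psi_\alpha/y^\alpha$ is smooth and homogeneous of degree $-(k+1)$, with derivatives correspondingly more negatively homogeneous. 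Since $(1-\tilde\chi)$ vanishes near $0$, the product
\[
\tilde h_\alpha(x,y):=\frac{\psi_\alpha(y)(1-\tilde\chi(y))}{y^\alpha}\,h(x,y)
\]
extends smoothly across $y=0$ by zero and is a smooth polynomially-decaying multiplier times the Schwartz function $h$; hence $\tilde h_\alpha\in\cS(\bR^n\times\bR^m)$, and $\sum_{|\alpha|=k+1}y^\alpha\tilde h_\alpha=(1-\tilde\chi)h$.

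Setting $h_\alpha:=\tilde g_\alpha+\tilde h_\alpha$ yields $h=\sum_{|\alpha|=k+1}y^\alpha h_\alpha$ with $h_\alpha\in\cS$; linearity in $h$ is manifest and continuity in the Schwartz-seminorm topology follows by tracking the estimates through each step of the construction. The main obstacle this device is designed to sidestep is that Taylor's integral formula by itself produces $g_\alpha$ whose $y$-decay is in general only $O(|y|^{-1})$ rather than Schwartz, so a direct Taylor approach cannot deliver Schwartz $h_\alpha$; the partition-of-unity division away from the origin is precisely what replaces the bad long-range behaviour of the Taylor formula with genuine Schwartz decay.
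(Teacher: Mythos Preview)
Your proof is correct and follows essentially the same strategy as the paper: split via a cutoff in $y$, use the Taylor integral remainder near the origin, and divide by monomials away from the origin to recover Schwartz decay. The paper's version is more economical in that it reduces to the case $k=0$ (where the canonical identity $\sum_j y_j\cdot y_j/|y|^2=1$ replaces your partition-of-unity construction) and then obtains the general case by iterating, but the underlying idea is the same.
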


\begin{proof}[Sketchy proof] It suffices to consider the case $k=0$.
Fixing  $\psi\in C^\infty_c(\bR^m)$ and $\psi=1$ on a neighborhood of the origin, we have:
$$
\begin{aligned}
h(x,y)
&=\int_0^1 \partial_r h\big(x,ry\big) dr
\\
&=\sum_{j=1}^my_j\left(\psi(y)\int_0^1 (\partial_{y_j} h)\big(x,ry\big) dr
+
y_j h(x,y) \frac{1-\psi(y)}{|y|^2}\right) \ .\qedhere
\end{aligned}
$$
\end{proof}

\begin{lemma}\label{basis_polynomial_sphere}
  \begin{itemize}
  \item[1.] 
Let $a\in \bR^m \backslash \{0\}$, $m\ge2$.
The polynomials $\lan a,\tau \ran^j$, $j\in \bN$, as functions of $\tau$ in the unit sphere $S^{m-1}\subset\fz$, are linearly independent.
  \item[2.] 
Let $a\in \bC^m \backslash \{0\}$, $m\ge2$.
The polynomials $( a\mid \tau)^{j_1}\overline{( a\mid\tau)}{}^{j_2} $, $j\in \bN^2$, 
as functions of $\tau\in S^{2m-1}$, are linearly independent.
  \end{itemize}
\end{lemma}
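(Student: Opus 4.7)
The plan is to reduce each part to the statement that a polynomial in one (real or complex) variable vanishing on a set with non-empty interior must be identically zero. The key geometric input is that the linear form $\tau\mapsto\langle a,\tau\rangle$ (resp.\ $\tau\mapsto(a\mid\tau)$) maps the unit sphere \emph{onto} the interval $[-|a|,|a|]$ (resp.\ the closed disc of radius $|a|$ in $\bC$), and this surjectivity is exactly where the hypothesis $m\ge 2$ is used.

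For part 1, I would first show surjectivity of $\tau\mapsto\langle a,\tau\rangle$ from $S^{m-1}$ onto $[-|a|,|a|]$ by the explicit construction $\tau=(t/|a|^{2})a+\tau'$, where $\tau'$ lies in $a^\perp$ with $|\tau'|^{2}=1-t^{2}/|a|^{2}$; such a $\tau'$ exists because $\dim a^\perp=m-1\ge 1$. Given a linear relation $\sum_{j=0}^{N}c_j\langle a,\tau\rangle^j=0$ on $S^{m-1}$, the polynomial $P(t)=\sum_j c_j t^j$ then vanishes on the whole interval $[-|a|,|a|]$, hence is the zero polynomial, forcing $c_j=0$ for all $j$.

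For part 2, the same idea works with $\tau=(w/|a|^{2})a+\tau'$, where now $\tau'$ is taken in the Hermitian orthogonal complement of $a$ in $\bC^m$ with $|\tau'|^{2}=1-|w|^{2}/|a|^{2}\ge 0$; again this requires only $m\ge 2$. Thus $\tau\mapsto(a\mid\tau)$ maps $S^{2m-1}$ onto the closed disc $\overline{D}(0,|a|)\subset\bC$. A linear relation $\sum_{j_1,j_2}c_{j_1,j_2}(a\mid\tau)^{j_1}\overline{(a\mid\tau)}{}^{j_2}=0$ on $S^{2m-1}$ then yields a polynomial identity $\sum c_{j_1,j_2}z^{j_1}\bar z^{j_2}=0$ on the open disc $D(0,|a|)$; writing $z=x+iy$, this is a genuine polynomial in $(x,y)$ vanishing on a non-empty open subset of $\bR^{2}$, hence it is the zero polynomial, and the coefficients $c_{j_1,j_2}$ (which are recovered by applying $\partial_z^{j_1}\partial_{\bar z}^{j_2}$ at $0$ up to a nonzero factor) all vanish.

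No step poses a real obstacle; the only point worth being explicit about is the surjectivity onto the interval/disc, since that is exactly the hypothesis $m\ge 2$ being used. One could alternatively invoke a Wronskian-type argument, but the polynomial identity theorem applied to the image of the sphere is the cleanest route and treats both parts uniformly.
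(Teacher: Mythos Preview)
Your proposal is correct and follows essentially the same approach as the paper's proof. The paper first normalises $a$ to $(1,0,\dots,0)$ so that $\langle a,\tau\rangle=\tau_1$ ranges over $[-1,1]$ (resp.\ the unit disc in $\bC$), whereas you keep $a$ general and exhibit the surjectivity onto $[-|a|,|a|]$ (resp.\ the disc of radius $|a|$) by an explicit decomposition $\tau=(t/|a|^2)a+\tau'$; this is a cosmetic difference, and your version has the minor advantage of making the role of the hypothesis $m\ge 2$ fully explicit.
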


\begin{proof}[Sketchy proof]  In both cases we can assume that $a=(1,0,\dots,0)$. 
In the real case, if $P(\tau)=\sum_{j=0}^nc_j\lan a,\tau\ran^j$ is zero on the unit sphere, then $\sum_{j=0}^nc_j\tau_1^j$ is zero for $\tau_1\in[-1,1]$.
The same holds in the complex case, with $\tau_1$ in the unit disk in $\bC$.
\end{proof}

\begin{proof}[Proof of Lemma~\ref{vanishing}] We begin with a pair in the second or third block of Table \ref{list}, except for the one at line 11.

In this case there is only one mixed operator $\rho_{\fv,\fz}$ which can be written as:
\begin{equation}\label{p}
\rho_{\fv,\fz}(\omega,\zeta)=p (\omega,\zeta)=\sum_{\ell=1}^m\zeta_\ell p_\ell(\omega)
\end{equation}
where $m=\dim\fz$.

Consider first the case $n=0$. Then $u$ is $K$-invariant and $u(\omega,0)=0$ for every $\omega$. Then, with $h=\cE_Ku$,
$$
u(\omega,\zeta)=h\big(\rho(\omega,\zeta) \big)\ .
$$

It follows that $h(\xi_\cL,0,0)=0$ whenever $\xi_\cL\in\rho_\fv(\fv)$. 
Replacing $h$ by $h(\xi)-h(\xi_\cL,0,0)\psi(\xi_M,\xi_\Delta)$, 
with $\psi\in C^\infty_c(\bR^2)$ and $\psi=1$ on a neighborhood of the origin, 
we may assume that $h(\xi_\cL,0,0)=0$ for every $(\xi_\cL,0,0)\in\bR^d$.

By Lemma \ref{hadamard},
$$
h(\xi)=\xi_M h_1(\xi)+\xi_\Delta h_2(\xi)\ ,
$$
with $h_1,h_2\in\cS(\bR^{d+2})$ depending linearly and continuously on $h$, hence on $u$. Then
\begin{equation}\label{n=0}
u(\omega,\zeta)=p(\omega,\zeta)h_1\big(\rho(\omega,\zeta)\big)+|\zeta|^2h_2\big(\rho(\omega,\zeta)\big)\ .
\end{equation}

This gives \eqref{expansion} for $n=0$.

Suppose now that \eqref{expansion} holds for $n$, 
and suppose that $u$ vanishes at $\zeta=0$ with all derivatives in $\zeta$ up to order $n+1$.

We differentiate \eqref{expansion} $n+1$ times along a given direction $\tau$ in $\fz$ at $\zeta=0$ 
and impose that the result is zero. 
By the $K$-invariance of each term in \eqref{expansion}, 
we can assume that $\tau=e_1$, i.e., we differentiate in $\zeta_1$.  
Each term $p(\omega,\zeta)^j |\zeta|^{2k}$ appearing in \eqref{expansion} is homogeneous of degree $n+1$ in $\zeta$, except for the single term $|\zeta|^{2(m+1)}$ when $n=2m$, which is homogeneous of degree $n+2$. Therefore,
$$
{\de_{\zeta_1}^{n+1}}u(\omega,0)=\!\!\sum_{j+2k=n+1} \!\!{\de_{\zeta_1}^{n+1}}\big(p(\omega,\zeta)^j |\zeta|^{2k}\big)(\omega,0)\,\,h_{jk}\big(\rho(\omega,0)\big)\ .
$$

For $j+2k=n+1$ we have
$$
\de_{\zeta_1}^{n+1}\big(p(\omega,\zeta)^j |\zeta|^{2k}\big)={{n+1}\choose j}j!p_1(\omega)^j(2k)!=(n+1)!p_1(\omega)^j\ .
$$

Therefore, for every $\omega\in\fv$,
$$
\sum_{j+2k=n+1}p_1(\omega)^j h_{jk}\big(\rho_\fv(\omega), 0,0\big)=0\ .
$$

Noticing that $p(\omega,\zeta)=p_1(\omega)$, by $K$-invariance we have
$$
\sum_{j+2k=n+1}p(\omega,\tau)^j h_{jk}\big(\rho_\fv(\omega), 0,0\big)=0\ ,
$$
for every $\tau\in S_\fz$ and every $\omega\in\fv$. 
Fix $\omega\in\fv$ and set $p(\omega,\tau)=\lan a, \tau \ran$,
with  $a=\big(p_1(\omega),\dots,p_m(\omega)\big) \in \bR^m$. Then $a$ is generically non-zero and, by Lemma~\ref{basis_polynomial_sphere}, part 1, 
$h_{jk}\big(\rho_\fv(\omega), 0,0\big)=0$ for any $(j,k)$ with $j+2k=n+1$.

If $n$ is odd, the pairs $(j,k)$ with $j+2k=n+1$ exhaust $E_n$. Then, applying \eqref{n=0},
$$
u(\omega,\zeta)=\sum_{j+2k=n+1}p(\omega,\zeta)^j |\zeta|^{2k}\Big(p(\omega,\zeta)h'_{jk}\big(\rho(\omega,\zeta)\big)+|\zeta|^2h''_{jk}\big(\rho(\omega,\zeta)\big)\Big)\ ,
$$
where each term contains a factor $p(\omega,\zeta)^{j'} |\zeta|^{2k'}$ with $(j',k')\in E_{n+1}$.

For $n$ even the argument is the same, once we have noticed that the pair $(0,m+1)$ belongs to both $E_{2m}$ and $E_{2m+1}$.

\medskip
Consider now the pair at line 11 in Table \ref{list}.
In this case let $q_1$, $q_2$ be the two elements of $\rho_{\fv,\fz}$ in Table \ref{invariants}
and set:
$$
p(\omega,\zeta)=q_1(\omega,\zeta)+iq_2(\omega,\zeta)=\sum_{\ell=1}^3 p_\ell(\omega) \zeta_j\ .
$$

Then \eqref{expansion} is equivalent to:
\begin{equation}\label{expansion_l11}
u(\omega,\zeta)=\sum_{(j,k)\in E_n}p(\omega,\zeta)^{j_1} \overline{ p(\omega,\zeta)}{}^{j_2} |\zeta|^{2k}h_{jk}\big(\rho(\omega,\zeta)\big)
\end{equation}
where the $h_{jk}\in\cS(\fn)^K$ are new functions depending linearly and continuously on $u$.
 
The case $n=0$ is a simple adaptation of the proof for the other cases.

Suppose now that \eqref{expansion} holds for $n$, 
and suppose that $u$ vanishes at $\zeta=0$ with all derivatives in $\zeta$ up to order $n+1$. 
By the inductive assumption, we know that \eqref{expansion} and thus \eqref{expansion_l11} hold. 

In this proof
we denote by $\partial_\tau$ and $\bar\partial_\tau$ the holomorphic and antiholomorphic derivatives 
in a given direction $\tau$ in $\fz\sim\bC^3$. 
We impose the condition $\de_\tau^{r_1}\bar \de_\tau^{r_2}u(\omega,0)=0$, for $r_1+r_2=n+1$.

By $K$-invariance, we can assume that $\tau=e_1$. We have
$$
\de_{\zeta_1}^{r_1}\bar \de_{\zeta_1}^{r_2}\big(p(\omega,\zeta)^j |\zeta|^{2k}\big)= 
\begin{cases}r_1! r_2! p_1(\omega)^{j_1} \overline{p_1(\omega)}{}^{j_2}&\text{ if }j_1=r_1-k\ ,\ j_2=r_2-k\\
0&\text{ otherwise.}
\end{cases}
$$

Arguing as in the previous proof, this implies that for every $r_1,r_2\ge k$ with $r_1+r_2=n+1$,
$$
\sum_{|j|+2k=n+1\,,\,j_1-j_2=r_1-r_2} p_1(\omega)^{j_1} \overline{p_1(\omega)}{}^{j_2} h_{jk}\big(\rho_\fv(\omega), 0,0\big)=0\ ,
$$

By $K$-invariance, for the same values of $r_1,r_2$,
$$
\sum_{|j|+2k=n+1\,,\,j_1-j_2=r_1-r_2} p(\omega,\tau)^{j_1} \overline{p(\omega,\tau)}{}^{j_2} h_{jk}\big(\rho_\fv(\omega), 0,0\big)=0\ ,
$$
for every $\tau\in S^5$ and every $\omega\in\fv$. 
The proof continues as in the previous case, using part 2 of Lemma~\ref{basis_polynomial_sphere}.

\medskip

Finally, the case of a pair in the first block of Table \ref{list} is much simpler to prove, following the same lines.
\end{proof}

\bigskip

\subsection{Proof of Proposition~\ref{derivatives}}\hfill

\bigskip

We assume that $(N,K)$ is a pair in the second or third block of Table \ref{list}. The remaining cases are simpler and the modifications to the proof below are left to the reader. 

Notice that the spherical functions in $\Sigma_\cD^{\text{sing}}$ coincide with the spherical functions for the abelian pair $(\fv,K)$.
Moreover, for $F\in L^1(N)^K$ and $\xi_\cL=\rho_\fv(\omega)$, $\omega\in\fv$,
\begin{equation}\label{gelfandfourier}
\cG F(\xi_\cL,0,0)=\hat F(\omega,0)\ ,
\end{equation}
where $\hat F$ is the Fourier transform of $F$ on $\fv\oplus\fz$ (identified with $\fv^*\oplus\fz^*$ via the $K$-invariant scalar products).
 
Let $u(v)=\int_\fz F(v,z)\,dz\in\cS(\fv)^K$, and $g_{00}=\cE_K\hat u$. 
By Proposition \ref{hulanicki}, $F_{00}=g_{00}(\cL)\del_0\in\cS(N)^K$. By \eqref{gelfandfourier}, 
$$
\cG F_{00}(\xi_\cL,0,0)=g_{00}(\xi_\cL)=\cG F(\xi_\cL,0,0)\ ,
$$
for every $\xi_\cL\in\rho_\fv(\fv)$.
In particular,
$$
\int_\fz F_{00}(v,z)\,dz=u(v)\ .
$$

Let $R_0=F-F_{00}$. Since $\int_\fv F(v,z)\,dz=0$ for every $v$, 
Lemma~\ref{hadamard} implies that, with $m=\dim\fz$,
$$
R_0=\sum_{j=1}^m \de_{z_j}R'_j\ ,
$$
with $R'_j\in\cS(N)$ depending linearly and continuously on $R_0$ thus $F$. 

This proves the statement for $n=0$.

Suppose now that the statement is true for $n$, and consider the function $R_n$ in \eqref{geller}.
By \eqref{remainder}, $\widehat{R_n}(\omega,\zeta)\in \cS(\fn)^K$ vanishes with all derivatives up to order $n$ for $\zeta=0\in\fz$. 
By Lemma \ref{vanishing}, 
$$
\widehat{R_n}(\omega,\zeta)=\sum_{(j,k)\in E_n}\rho_{\fv,\fz}(\omega,\zeta)^j |\zeta|^{2k}h_{jk}\big(\rho(\omega,\zeta)\big)\ ,
$$
where the $h_{jk}\in\cS(\fn)^K$ depend linearly and continuously on $\widehat{R_n}$, thus on $F$.
 Undoing the Fourier transform,
\begin{equation}\label{non-constant}
R_n(v,z)=\sum_{(j,k)\in E_n}\rho_{\fv,\fz}(i\inv\nabla_\fv,i\inv\nabla_\fz)^j \Delta^k F_{jk}(v,z)
\end{equation}
where 
the $F_{jk}$ are in $\cS(N)^K$, depend linearly and continuously on $R_n$, and hence on $F$.

We replace now the constant coefficient operator $p(i\inv\nabla_\fv,i\inv\nabla_\fz)$ by $D$
using: 
\begin{lemma}
\label{replace_constant_coeff_op}
For $j\in \bN$, or $j\in \bN^2$ in the case of line 11, 
  $$
D^j-\rho_{\fv,\fz}(i\inv\nabla_\fv,i\inv\nabla_\fz)^j=
\sum_{|\alpha|=|j|+1} \partial_z^\alpha P_\alpha(v,\nabla_\fv,\nabla_\fz)
$$
where $P_\alpha(v,\nabla_\fv,\nabla_\fz)$ is a differential operator in $\fv$ and $\fz$ with polynomial coefficients in $v$. 
\end{lemma}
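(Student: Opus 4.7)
The plan is to decompose $D=\lambda'(\rho_{\fv,\fz})$ as $D=D_0+E$, where $D_0=\rho_{\fv,\fz}(i^{-1}\nabla_\fv,i^{-1}\nabla_\fz)$ is the constant-coefficient operator on $\fn$ and $E$ is a correction of strictly higher order in $\partial_z$; then expanding $D^j$ multinomially will show that every word other than $D_0^j$ carries enough $\partial_z$-derivatives that $|j|+1$ of them can be factored out on the left. The key starting remark is that in every row of the second and third blocks of Table~\ref{invariants}, $\rho_{\fv,\fz}(v,z)$ is linear in the $z$-variable, so it has the form $\rho_{\fv,\fz}(v,z)=\sum_\ell p_\ell(v)z_\ell$ (at line $11$ we apply the same argument separately to $q_1$ and $q_2$, and interpret $D^j$ as $M_1^{j_1}M_2^{j_2}$). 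Since $z_\ell$ is central in $\fn$, its symmetrization $\lambda'(z_\ell)=-i\partial_{z_\ell}$ commutes with every $X_j$, giving
$$
D=\sum_\ell \lambda'(p_\ell(v))\cdot(-i\partial_{z_\ell}).
$$

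The main technical point is to show that $\lambda'(p_\ell(v))=p_\ell(i^{-1}\nabla_\fv)+\Psi_\ell$ with $\Psi_\ell$ of $\partial_z$-order at least $1$. In exponential coordinates on the $2$-step nilpotent group $N$, each left-invariant field $X_j$ splits as $X_j=Y_j+T_j$ with $Y_j=\partial_{v_j}$ and $T_j$ a first-order operator in $\partial_z$ whose coefficients are linear in $v$; moreover the $Y_j$'s commute among themselves, the $T_j$'s commute among themselves, and $[Y_j,T_k]$ is a constant-coefficient operator of $\partial_z$-order exactly $1$. Expanding any product $X_{i_1}\cdots X_{i_d}$ in terms of the $Y$'s and $T$'s, the only contribution of $\partial_z$-order $0$ comes from the all-$Y$ word, which after $d!$-symmetrization over permutations (by commutativity of the $Y_j$'s) and the prefactor $i^{-d}$ in the definition of $\lambda'$ reproduces exactly $p_\ell(i^{-1}\nabla_\fv)$; every other contribution carries at least one $T$-factor or one bracket $[Y,T]$, each of which supplies a $\partial_z$. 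Consequently $D=D_0+E$ with $D_0$ of $\partial_z$-order $1$ and $E$ of $\partial_z$-order $\ge 2$.

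To finish I would expand $D^j=(D_0+E)^j$ (and analogously $M_1^{j_1}M_2^{j_2}=(M_{1,0}+E_1)^{j_1}(M_{2,0}+E_2)^{j_2}$ at line $11$) as a sum of words in the letters $D_0$ and $E$. The all-$D_0$ word is $D_0^j=\rho_{\fv,\fz}(i^{-1}\nabla_\fv,i^{-1}\nabla_\fz)^j$, since constant-coefficient operators commute. Any remaining word contains $r\ge 1$ factors of $E$, so its $\partial_z$-order is at least $(|j|-r)+2r=|j|+r\ge|j|+1$. Because $\partial_{z_\ell}$ commutes with every $v$, every $\partial_v$, and every $\partial_z$, any operator of $\partial_z$-order $\ge|j|+1$ can be written as $\sum_{|\alpha|=|j|+1}\partial_z^\alpha P_\alpha(v,\nabla_\fv,\nabla_\fz)$ for suitable $P_\alpha$ with polynomial coefficients in $v$, which is the desired identity. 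The only delicate step is the bookkeeping in the middle paragraph: one must rule out that the non-commutativity within $\{X_{i_1},\ldots,X_{i_d}\}$ or any subsequent reshuffling produces a $\partial_z$-order $0$ term other than the clean all-$Y$ path; this is handled once for all by noting that every commutator between the elementary pieces (a $v$-polynomial, a $Y$, a $T$, or a $\partial_z$) is either zero or itself of $\partial_z$-order $\ge 1$.
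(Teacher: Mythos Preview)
Your proof is correct and follows essentially the same approach as the paper. Both arguments first establish that $D=D_0+E$ with $E$ of $\partial_z$-order at least~$2$ (the paper does this by an explicit calculation of $V_rV_s$ for the quadratic $p_\ell$'s, while you argue more abstractly via the splitting $X_j=Y_j+T_j$ and the observation that every commutator among the pieces carries a $\partial_z$), and then iterate: the paper says ``use this expression of $D$ recursively'', which is precisely your multinomial expansion of $(D_0+E)^j$ together with the additivity of $\partial_z$-order under products.
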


\begin{proof}
To fix the notation, we disregard the case of line 11. The proof for that case requires only minor modifications.
Fix an orthonormal coordinate system $(v_1,\dots,v_q)$ on $\fv$.
By \eqref{p}, 
$$
p(i\inv\nabla_\fv,i\inv\nabla_\fz)=\sum_{\ell=1}^mp_\ell(\de_{v_1},\dots,\de_{v_q})\de_{z_\ell}\ , \qquad D=\sum_{\ell=1}^m\la'( p_\ell)(V_1,\dots,V_q)\de_{z_\ell}\ ,
$$
where $\la'$ is the modified symmetrisation operator in \eqref{symm} and 
$$
V_r=\de_{v_r}+\sum_{\ell=1}^m \chi_{r\ell }(v)\de_{z_\ell}\ ,
$$
the $\chi_{r\ell}$ being linear functionals.

Each $p_\ell$ is a quadratic form on $\fv$ for the pairs in the second block, and linear for the pairs in the third block. We restrict ourselves to pairs in the second block, the other cases being even simpler.

With these assumptions, $D$ is a sum of terms, each being a product $V_rV_s\de_{z_\ell}$. We have

$$
V_rV_s=\de_{v_r}\de_{v_s}+\sum_{k=1}^mc_{rsk} \de_{z_k}+\sum_{k=1}^m\chi_{sk}(v) \de_{v_r}\de_{z_k}+\sum_{k=1}^m\chi_{rk}(v) \de_{v_s}\de_{z_k}+\sum_{k_1,k_2=1}^m\chi_{rk_1}(v)\chi_{sk_2}(v) \de_{z_{k_1}}\de_{z_{k_2}}\ .
$$

So, there are polynomials $Q_j$ such that
$$
V_rV_s\de_{z_\ell}=\de_{v_r}\de_{v_s}\de_{z_\ell}+\sum_{k=1}^m\de_{z_\ell}\de_{z_k}Q_{rsk}(v,\nabla_\fv,\nabla_\fz)\ .
$$

Summing over all terms, we have that
$$
D=p(i\inv\nabla_\fv,i\inv\nabla_\fz)+\sum_{k,\ell=1}^m\de_{z_k}\de_{z_\ell}P_{k\ell}(v,\nabla_\fv,\nabla_\fz)\ .
$$

It is then sufficient to use this expression of $D$ recursively.
\end{proof}

Using Lemma \ref{replace_constant_coeff_op}, 
we have:
$$
R_n=\sum_{j+2k=n+1}\frac1{j!k!}D^j\Delta^kF_{jk}+\sum_{|\al|=n+2}\de_z^\al R'_{\al}\ .
$$ 
Observing that each term in $D^j \Delta^k$ contains $n+1$ derivatives
and using the case $n=0$ for the $F_{j,k}$:
$$
F_{jk}=g_{jk}(\cL)\del_0+\sum_{\ell=1}^m\de_{z_\ell}R_\ell^{j,k}\ ,
$$
we have:
$$
R_n=\sum_{j+2k=n+1}\frac1{j!k!}D^j\Delta^kg_{j,k}(\cL)\del_0+\sum_{|\al|=n+2}\de_z^\al \tilde R_{\al}\ ,
$$ 
with  each $g_{jk}\in\cS(\bR^d)$ and $\tilde R_\al\in\cS(N)$ depending linearly and continuously on~$F$. 

This finishes the proof of Proposition~\ref{derivatives}.

\vskip1cm

\section{Proof of Theorem \ref{main}}\label{proof}

\bigskip

We denote by $d_\fv$ the number of invariants in $\rho_\fv$,
which is also the cardinality of $\cL$. Then $d=d_\fv+2$ for all cases except for line 11, where $d=d_\fv +3$.

For pairs in the first block of Table \ref{list} the variable $\xi_\cM$ must be disregarded.

Let $F\in\cS(N)^K$.
Take the functions $g_{jk}\in\cS(\bR^d)$, and $R_n$ of Proposition \ref{derivatives}.
Let $\psi\in \cS(\bR^d)$ be any function such that:
\begin{equation}
  \label{psi}
\de^j_{\xi_\cM}\de^k_{\xi_\Delta}\psi(\xi_\cL,0,0)=g_{jk}(\xi_\cL)\ ..  
\end{equation}
Fix $\ph(\xi_\cM,\xi_\Delta)$ smooth, with compact support and equal to 1 on a neighborhood of the origin. 
Then $\psi\ph\in\cS(\bR^d)$.

Define $G\in\cS(N)^K$ as
\begin{equation}\label{defG}
G=F-(\psi\ph)(\cL,\cM,\Delta)\del_0\ .
\end{equation}

We prove that from any such $\psi$
we can construct a Schwartz extension
of the Gelfand transform $\cG G$ of~$G$.

\begin{lemma} 
\label{G_H_al}
For every $n$, 
$G=\sum_{|\al|=n}\de_z^\al H_\al$
with $H_\al\in\cS(N)$, $|\al|=n$.
\end{lemma}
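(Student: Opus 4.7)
My plan is to apply Proposition~\ref{derivatives} to $G\in \cS(N)^K$ itself and show that every resulting Geller coefficient $g^G_{jk}$ vanishes identically. Once that is established, the Geller formula for $G$ at level $n-1$ gives
$$
G = R^G_{n-1} = \sum_{|\al|=n}\de_z^\al H_\al, \qquad H_\al \in \cS(N),
$$
which is precisely the claim. I would proceed by induction on $|j|+2k$.

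For the base case $g^G_{00}=0$: by the construction in the proof of Proposition~\ref{derivatives}, $g^G_{00}$ is the Schwarz--Mather extension (Theorem~\ref{Schwarz-Mather}) of $\widehat{u^G}(v)=\widehat G(v,0)$, where $u^G(v)=\int_\fz G(v,z)\,dz$. I would set $F_0 := (\psi\ph)(\cL,\cM,\Delta)\del_0 \in \cS(N)^K$, note that its spherical transform is $\psi\ph$, and apply \eqref{gelfandfourier} to both $F$ and $F_0$ to obtain
$$
\widehat G(v,0) = \cG F(\rho_\fv(v),0,0) - (\psi\ph)(\rho_\fv(v),0,0) = g_{00}(\rho_\fv(v)) - g_{00}(\rho_\fv(v)) = 0,
$$
where the final equalities use the Geller formula for $F$ at level $0$ and the condition \eqref{psi} with $j=k=0$.

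For the inductive step, assuming $g^G_{j'k'}=0$ for all $|j'|+2k'<N$, Proposition~\ref{derivatives} applied to $G$ at level $N-1$ yields $G=\sum_{|\al|=N}\de_z^\al\tilde R_\al$, so $\widehat G$ vanishes at $\zeta=0$ to order $N-1$. The next batch of Geller coefficients $\{g^G_{jk}\}_{(j,k)\in E_{N-1}}$ is extracted by running Lemma~\ref{vanishing} on $\widehat G$, then inverse Fourier transforming and invoking Lemma~\ref{replace_constant_coeff_op}. Since the entire procedure is linear and continuous in $G$, I would write $g^G_{jk}=g^F_{jk}-g^{F_0}_{jk}$, with $g^F_{jk}=g_{jk}$ by definition, and reduce the problem to proving the matching $g^{F_0}_{jk}=g_{jk}$.

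The hard part will be verifying this matching. It expresses that the Geller algorithm of Proposition~\ref{derivatives}, applied to the spectral multiplier $F_0=(\psi\ph)(\cL,\cM,\Delta)\del_0$, reproduces the Taylor jet of the multiplier $\psi\ph$ at the singular point $\xi_\cM=\xi_\Delta=0$; by the very design of $\psi$ in \eqref{psi}, that jet equals $\{g_{jk}(\xi_\cL)\}$. My plan for this step is a parallel induction on $F_0$, exploiting the intertwining $\cG(\cM^j\Delta^k H)=\xi_\cM^j\xi_\Delta^k\,\cG H$ so that the extraction procedure of Proposition~\ref{derivatives} effectively reads off the formal Taylor expansion of $\cG F_0=\psi\ph$ at $\Sigma_\cD^{\mathrm{sing}}$. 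Combining this with the main induction yields $g^G_{jk}=0$ for every $(j,k)$, completing the proof.
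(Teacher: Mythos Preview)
Your approach is genuinely different from the paper's, and the step you label ``the hard part'' contains a real gap that your plan does not close.

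The difficulty is this: the Geller coefficients $g^H_{jk}$ of Proposition~\ref{derivatives} are extracted from the \emph{abelian} Fourier transform $\widehat H$ on $\fn$ (via Lemma~\ref{vanishing} and Lemma~\ref{replace_constant_coeff_op}), not from the Gelfand transform $\cG H$. The intertwining $\cG(\cM^j\Delta^k H)=\xi_\cM^j\xi_\Delta^k\,\cG H$ you invoke lives on $\Sigma_\cD$ and says nothing about the $\zeta$-derivatives of $\widehat{F_0}$ at $\zeta=0$, which is exactly what the inductive step of the Geller procedure consumes. For $F_0=(\psi\ph)(\cL,\cM,\Delta)\del_0$ you know $\cG F_0=\psi\ph$ on $\Sigma_\cD$ and hence $\widehat{F_0}(\omega,0)=g_{00}(\rho_\fv(\omega))$, which gives your base case; but beyond that you have no formula for $\de_\zeta^\al\widehat{F_0}(\omega,0)$, because neither $F_0$ nor $g_{00}(\cL)\del_0$ are constant-coefficient kernels. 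So the claim $g^{F_0}_{jk}=g_{jk}$ for $(j,k)\in E_{N-1}$ is not a matter of ``reading off a Taylor jet''; it would require relating the group Fourier analysis hidden in $\widehat{F_0}$ to the multiplier $\psi\ph$, which is precisely the obstacle the lemma is meant to bypass.

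The paper avoids this entirely by staying on the Gelfand side. Applying $\cG$ to the Geller formula \eqref{geller} for $F$ gives $\cG F=T_n+\cG R_n$ on $\Sigma_\cD$, with $T_n(\xi)=\sum_{|j|+2k\le n}\frac{\xi_\cM^j\xi_\Delta^k}{j!k!}g_{jk}(\xi_\cL)$. Hence $\cG G=\cG R_n+(T_n-\psi\ph)$ on $\Sigma_\cD$. One then replaces $T_n-\psi\ph$ by a globally Schwartz function $\bar\psi_n$ agreeing with it on $\Sigma_\cD$ and with $T_n-\psi$ near the singular set; by the very choice \eqref{psi} of $\psi$, $\bar\psi_n$ vanishes to order $n$ in $(\xi_\cM,\xi_\Delta)$ there, and Hadamard's Lemma~\ref{hadamard} writes $\bar\psi_n=\sum_{|j|+k=n+1}\xi_\cM^j\xi_\Delta^k\bar\psi_{jk}$. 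Inverting $\cG$ yields $G=R_n+\sum\cM^j\Delta^k\bar\psi_{jk}(\cL,\cM,\Delta)\del_0$, and every term carries at least $n+1$ derivatives in $z$. No appeal to the abelian Fourier transform of $F_0$ is needed.
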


\begin{proof}
Fix $n\in\bN$. Let 
$$
T_n(\xi)=\sum_{|j|+2k\le n}\frac{\xi_\cM^j\xi_\Delta^k}{j!k!}g_{jk}(\xi_\cL)\ .
$$

Following \cite{ADR2}, Proposition 7.2, we modify $T_n$ outside of $\Sigma_\cD$ so to obtain a Schwartz function.
More precisely,
using \eqref{control_xi_j}, we define $\Omega_1$, resp. $\tilde \Omega_1$,  as the set of points $\xi\in\bR^d$ 
such that, for  $l=2,\ldots,d$, $|\xi_l|\leq 2C_l(1+|\xi_1|)^{\frac {\gamma_l}2}$,
resp. $|\xi_l|\leq 4C_l(1+|\xi_1|)^{\frac {\gamma_l}2}$,
and $\Omega_2$, resp. $\tilde \Omega_2$, as the tubular sets of points $\xi\in \bR^d$ 
such that  $\sum_{l=2}^d \xi_l^2\leq 1$, resp. $\sum_{l=2}^d \xi_l^2\leq 2$.
We set $\Omega=\Omega_1\cup \Omega_2$,
$\tilde\Omega=\tilde\Omega_1\cup \tilde \Omega_2$.

Fix $\chi\in C^\infty(\bR^d)$ such that $\chi=1$ on $\Omega$, $\chi=0$ on the complementary of $\tilde \Omega$;
as the distance between $\Omega$ and the complementary of $\tilde \Omega$ is positive,
we can assume that each derivative of $\chi$ is bounded.
Then $\chi T_n$ is a Schwartz function on $\bR^d$, which coincides with $T_n$ on a neighborhood of $\Sigma_\cD$.

We set:
$$
\psi_n(\xi)=\psi(\xi)-\chi(\xi) T_n(\xi)
\ ,\qquad
\bar\psi_n(\xi)=\big(1-\ph(\xi_\cM,\xi_\Delta)\big)\chi(\xi)T_n(\xi)-\psi_n(\xi)\ph(\xi_\cM,\xi_\Delta)
\ .
$$
$\psi_n$ and $\bar\psi_n$ are Schwartz functions of $\bR^d$.
As $\bar \psi_n$ coincides with $T_n-\psi$ on a neighbourhood of $(\xi_\cL,0,0)\in\bR^d$,
$\bar\psi_n$ vanishes with all derivatives in $\xi_\cM,\xi_\Delta$ up to order $n$ at each point $(\xi_\cL,0,0)\in\bR^d$.
Using Lemma~\ref{hadamard},
$$
\bar \psi_n(\xi)=\sum_{|j|+k=n+1}\xi_\cM^j\xi_\Delta^k \bar\psi_{jk}(\xi)\ ,
$$
with $\bar\psi_{jk}\in\cS(\bR^d)$.

Furthermore, for $\xi\in\Sigma_\cD$ we have:
$$
 \cG F(\xi)=\cG G(\xi)+(\psi\ph)(\xi) =T_n(\xi)+\cG R_n(\xi)\ ,
$$
therefore:
$$
\cG G(\xi)=\cG R_n(\xi)+ \bar \psi_n(\xi)\ .
$$
 Inverting the Gelfand transform,
$$
G=R_n+\sum_{|j|+k=n+1}\cM^j\Delta^k \bar\psi_{jk}(\cL,\cM,\Delta)\del_0\ ,
$$
where each term $\cM^j\Delta^k$ contains at least $n+1$ derivatives in the $\fz$-variables. 
\end{proof}

Denote by $\tilde G$ the Radon transform of $G$ on $N'$. 
Assume that the derivatives $\de_z^\al$ are referred to an orthonormal frame in $\fz$ having $\zeta_0$ as its last element. For every $n$, 
$$
\tilde G(v,t)=\de_t^n\tilde H_n(v,t)\ ,
$$
where $\tilde H_n$ is the Radon transform of $H_{0,\ldots,0,n}$ given by Lemma~\ref{G_H_al}.

This implies that $\cG'_0 \tilde G$ vanishes of infinite order at $\eta_d=0$.

\begin{lemma}
\label{lem_extension_der0_N'}
If $G$ is as above, 
there exists a Schwartz extension $g$ of $\cG'_0\tilde G$ 
satisfying:
\begin{enumerate}
\item all derivatives of $g$ vanish at all points of $\bR^d$ with $\eta_d=0$,
\item the support of $g$ is contained in a set where
 $|\eta_j|\leq C |\eta_1|^{\frac{\gamma'_j}2}$, $j=2,\ldots,d$,
\item for every $p\geq 0$, there exist a constant $C_p$ and an integer $q$ such that
$$
\|g\|_{(p)}\leq C_p\| \tilde G\|_{(q)}\ .
$$
\end{enumerate}
\end{lemma}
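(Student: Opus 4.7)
The plan is to reduce to the known Schwartz extension theorem for $(N',K'_0)$. From Table \ref{N'}, in all twelve rows $N'$ is either a Heisenberg group (lines 1--9), where \eqref{conjecture} is established in \cite{ADR1, ADR2}, or a direct product of a Heisenberg group and an abelian factor (lines 10--12), where the analogue of \eqref{conjecture} follows by combining the Heisenberg case with the Schwarz--Mather theorem (Theorem \ref{Schwarz-Mather}) applied to the abelian variable. Either way, I have at my disposal a continuous Schwartz extension operator $\mathcal{E}' : \cS(N')^{K'_0} \to \cS(\bR^d)$ whose values restrict to $\cG'_0$ on $\Sigma_{\cD'_0}$. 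Applying $\mathcal{E}'$ to $\tilde G$ and then multiplying by a cutoff $\chi'$ built from the subelliptic inequalities \eqref{control_eta_j} exactly as the cutoff $\chi$ is built in the proof of Lemma \ref{G_H_al} (with $\gamma'_j$ in place of $\gamma_j$ and $\Sigma_{\cD'_0}$ in place of $\Sigma_\cD$) yields a Schwartz extension $g_1$ of $\cG'_0\tilde G$ already satisfying (2) and (3); only condition (1) remains to be enforced.

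To enforce (1), I sharpen the use of Lemma \ref{G_H_al}. Choose the orthonormal frame of $\fz$ with $\zeta_0$ as last vector: in the decomposition $G = \sum_{|\alpha|=n}\partial_z^\alpha H_\alpha$, every term with $\alpha\ne(0,\ldots,0,n)$ has at least one derivative perpendicular to $\zeta_0$, and such a derivative is annihilated by integration over $\zeta_0^\perp$ in $\cR$. Hence $\tilde G = \partial_t^n \tilde H_n$ where $\tilde H_n := \cR H_{(0,\ldots,0,n)} \in \cS(N')$ depends linearly and continuously on $G$; averaging over $K'_0$ (which does not affect $\partial_t^n \tilde H_n$) makes $\tilde H_n$ be $K'_0$-invariant. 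Taking Gelfand transform,
\[
\cG'_0 \tilde G = (i\eta_d)^n \cG'_0 \tilde H_n \qquad \text{on } \Sigma_{\cD'_0}.
\]
Setting $k_n := \chi'\cdot \mathcal{E}'(\tilde H_n)$ and $g^{(n)} := (i\eta_d)^n k_n$ produces a family of Schwartz extensions of $\cG'_0\tilde G$, each $g^{(n)}$ vanishing at $\{\eta_d=0\}$ to order $n-1$, with $\|k_n\|_{(p)} \le C_{p,n}\|G\|_{(q_{p,n})}$.

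I then glue the $g^{(n)}$ into a single $g$ by a Borel-type telescoping. Fix $\phi\in C^\infty_c(\bR)$ with $\phi \equiv 1$ near $0$ --- so that $\phi^{(k)}(0)=0$ for every $k\ge 1$ --- set $\phi_n(\eta_d) := \phi(\lambda_n \eta_d)$ for a rapidly increasing sequence $\lambda_n\uparrow\infty$ to be chosen, and define
\[
g := g^{(0)}\bigl(1-\phi_0(\eta_d)\bigr) + \sum_{n=1}^\infty g^{(n)}\bigl(\phi_{n-1}(\eta_d) - \phi_n(\eta_d)\bigr).
\]
For $\eta_d\ne 0$ only finitely many terms are nonzero and the cutoff coefficients telescope to $1$, so on the spectrum (where all $g^{(n)}$ agree with $\cG'_0\tilde G$) we recover $g = \cG'_0\tilde G$. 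In any neighbourhood $|\eta_d| < 1/(2\lambda_N)$ the cutoffs $\phi_n$ for $n\le N$ are identically $1$, so only terms with $n > N$ contribute, each vanishing to order $n-1 > N-1$ at $\eta_d = 0$; letting $N\to\infty$ gives (1). The hard part is choosing $\lambda_n$ to grow fast enough so that each term $g^{(n)}\bigl(\phi_{n-1}-\phi_n\bigr)$ has its $\|\cdot\|_{(p)}$ norm bounded by $2^{-n}C_p\|G\|_{(q(p))}$ for a single $q(p)$ independent of $n$: the Leibniz expansion of $\partial^k\phi_n$ contributes factors $\lambda_n^k$ that must be balanced against the support shrinkage $|\eta_d| \le C/\lambda_{n-1}$ of $\phi_{n-1}-\phi_n$, while simultaneously absorbing the $n$-dependence of $\|k_n\|_{(p)}$ coming from $\mathcal{E}'$.
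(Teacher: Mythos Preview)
Your approach is genuinely different from the paper's and more ambitious. The paper simply invokes Proposition~7.5 of \cite{ADR2} (plus a minor modification in the third block to handle the abelian factor), which already furnishes a linear continuous extension operator producing a Schwartz extension $g_0$ of $\cG'_0\tilde G$ with all derivatives vanishing on $\{\eta_d=0\}$; property~(2) is then obtained by multiplying by the cutoff $\chi'$, exactly as you do. In other words, the paper treats property~(1) as a black box inherited from the Heisenberg analysis, whereas you attempt to manufacture it from the weaker input that \eqref{conjecture} holds for $(N',K'_0)$, via a Borel-type gluing of the extensions $g^{(n)}=(i\eta_d)^n k_n$.

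There is, however, a real gap in the Borel step when it comes to property~(3). In order for the series $\sum g^{(n)}(\phi_{n-1}-\phi_n)$ to converge in $\cS(\bR^d)$ you must let $\lambda_n$ depend on the sizes $\|k_n\|_{(p)}$, hence on $G$; the $n$-th term is controlled by something like $n^p\,\lambda_n^{2p-n}\,2^{n-p}\,\|k_n\|_{(p)}$, and you need $\lambda_n$ large to kill the tail $n>2p$. But then for the finitely many initial terms $n\le 2p$ the exponent $2p-n$ is nonnegative and the same factor $\lambda_n^{2p-n}$ now \emph{grows} with $\lambda_n$, so the ``constant'' in front of $\|k_n\|_{(p)}$ depends on $G$. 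You end up with a bound of the form $\|g\|_{(p)}\le C_p(G)\,\|G\|_{(q(p))}+1$ rather than $C_p\,\|\tilde G\|_{(q(p))}$ with universal $C_p$. The difficulty you flag as ``the hard part'' is not a matter of bookkeeping: it is precisely the obstruction, and your sketch does not indicate how to get around it. The result in \cite{ADR2} that the paper cites avoids this by exploiting the explicit structure of the Heisenberg spectrum rather than a generic Borel argument, and that is what you would need to reproduce (or cite) here.
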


\begin{proof}
For pairs in the first and second block of Table \ref{list},
each group $N'$ is isomorphic to the Heisenberg group $H_m$
and Proposition 7.5 of \cite{ADR2} applies:
there exists a linear and continuous operator producing a Schwartz extension $g_0$ of $\cG'_0\tilde G$ 
such that  all of its derivatives vanish at all points of $\bR^d$ with $\eta_d=0$. 

For the third block,
each group $N'$ is isomorphic to the direct product of a Heisenberg group with an abelian factor.
Under these circumstances, 
the same conclusion can be drawn from a slight modifications of the arguments in 
Proposition 7.5 of \cite{ADR2}.

Using \eqref{control_eta_j}
we define $\Omega'$ and $\tilde\Omega'$ as the sets of points $(\eta_2,\ldots,\eta_d)\in\bR^{d-1}$ 
such that $|\eta_j|\leq 2C'_j$ and $|\eta_j|\leq 4C'_j$, $j=2,\ldots,d$, respectively.
We fix a function $\chi'\in C^\infty(\bR^{d-1})$ satisfying
$\chi'=1$ on $\Omega'$ and $\chi'=0$ on the complementary of $\tilde \Omega'$.
We set:
$$
g(\eta)=\left\{
  \begin{array}{ll}
g_0(\eta) \chi'(\eta_2 \eta_1^{-\frac{\gamma'_2}2}, \ldots,\eta_d \eta_1^{-\frac{\gamma'_d}2})
&\mbox{if}\, \eta_1\not=0\\
    0
&\mbox{if}\, \eta_1=0    
  \end{array}
\right.
$$
It is straightforward to show that $g$ satisfies the property (1), (2) and (3) of Lemma~\ref{lem_extension_der0_N'}.
\end{proof}

\begin{lemma}
\label{lem_extension_der0_N} 
If $G$ is as above, 
$\cG G$ admits a Schwartz extension $h$ 
such that for every $p\geq 0$, there are a constant $C_p$ and an integer $q$ for which:
\begin{equation}
  \label{h_p_G_q}
\|h\|_{(p)}\leq C_p\| G\|_{(q)}\ .
\end{equation}
\end{lemma}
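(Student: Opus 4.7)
The plan is to build $h$ by pulling back, via the coordinate change $\Theta$ of Lemma~\ref{Sigmas}, the extension $g\in\cS(\bR^d)$ of $\cG'_0\tilde G$ supplied by Lemma~\ref{lem_extension_der0_N'}. Concretely, I set $h(\xi)=g(\Theta(\xi))$ wherever $\Theta$ is defined (in particular where $\xi_d>0$) and $h(\xi)=0$ elsewhere, with the obvious modifications for $d=4,5$ where $\Theta$ also involves $\xi_4$ or $\xi_5$.

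First I verify that $h$ agrees with $\cG G$ on $\Sigma_\cD$. On $\Sigma_\cD^{\text{reg}}$, Proposition~\ref{spherical_N_N'} together with Lemma~\ref{Sigmas} gives $\cG G(\xi)=\cG'_0\tilde G\big(\Theta(\xi)\big)=g(\Theta(\xi))=h(\xi)$. On $\Sigma_\cD^{\text{sing}}\subset\{\xi_d=0\}$, $h=0$ by construction, while, using \eqref{gelfandfourier}, the definition of $g_{00}$ from the proof of Proposition~\ref{derivatives}, \eqref{psi} at $j=k=0$, and $\ph(0,0)=1$, one has $\cG G(\xi_\cL,0,0)=\cG F(\xi_\cL,0,0)-(\psi\ph)(\xi_\cL,0,0)=g_{00}(\xi_\cL)-g_{00}(\xi_\cL)=0$. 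Hence $h_{|_{\Sigma_\cD}}=\cG G$.

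The real work is to show that $h\in\cS(\bR^d)$ together with \eqref{h_p_G_q}. Smoothness of $h$ away from $\{\xi_d=0\}$ is immediate since $\Theta$ is smooth there. Near $\{\xi_d=0\}$ the map $\Theta$ is singular: for each mixed index $j$, $\eta_j=\xi_j\xi_d^{-1/2}$ and $\eta_d=\sqrt{\xi_d}$, so chain-rule differentiation of $h=g\circ\Theta$ produces factors of the form $\xi_d^{-r/2}$. The rescue is property (1) of Lemma~\ref{lem_extension_der0_N'}: since $|\de^\al g(\eta)|\le C_{\al,N}|\eta_d|^N$ for every $N$ and every multiindex $\al$, and $\eta_d=\sqrt{\xi_d}$, each unbounded factor $\xi_d^{-r/2}$ arising from the chain rule is dominated by choosing $N>r$. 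This shows that every derivative of $h$ extends continuously to, and vanishes at, $\{\xi_d=0\}$, so $h\in C^\infty(\bR^d)$. For rapid decay at infinity, one invokes the Schwartz property of $g$ together with the support bound (2) of Lemma~\ref{lem_extension_der0_N'}: in each of the regimes $|\xi_1|\to\infty$, $|\xi_d|\to\infty$, or $|\xi_j|\to\infty$ for some mixed $j$, the corresponding $|\eta|$-coordinate also tends to infinity (using that, on the support of $g$, $|\xi_j|$ large with $\xi_d$ bounded forces $|\eta_j|$ large), so $g$ decays faster than any polynomial. Assembling these bounds yields $\|h\|_{(p)}\le C_p\|g\|_{(p')}$ for some $p'=p'(p)$; composing with estimate (3) of Lemma~\ref{lem_extension_der0_N'} and with the continuity of the Radon transform $\cR\colon\cS(N)^K\to\cS(N')^{K'}$ recalled in Subsection~4.4 produces \eqref{h_p_G_q}.

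The main obstacle is the chain-rule bookkeeping: converting the trade-off between negative powers of $\xi_d^{1/2}$ coming from $\Theta$ and arbitrarily high positive powers of $\eta_d$ coming from the infinite-order vanishing of $g$ into uniform Schwartz-norm bounds requires some care. This is a variant of the analogous step carried out in \cite{ADR1, ADR2} for the Heisenberg case, now adapted to track several mixed coordinates simultaneously; no essentially new idea is required beyond a systematic induction on the order of differentiation.
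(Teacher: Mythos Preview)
Your proposal is correct and follows the same approach as the paper: define $h=g\circ\Theta$ on $\{\xi_d>0\}$ and $h=0$ elsewhere, then use the infinite-order vanishing of $g$ at $\eta_d=0$ to absorb the negative powers of $\xi_d^{1/2}$ produced by the chain rule, and the Schwartz decay of $g$ (together with the polynomial relation between $|\xi|$ and $|\eta|$) for the decay at infinity. The paper packages properties (1) and (2) of Lemma~\ref{lem_extension_der0_N'} into the single estimate $|\de^\al g(\eta)|\le C_{\al\sigma\tau}\|g\|_{(q)}\,|\eta_d|^\sigma(1+|\eta|)^{-\tau}$ and then transfers it directly to $h$, whereas you treat vanishing and decay separately; your explicit check that $\cG G=0$ on $\Sigma_\cD^{\text{sing}}$ is a small bonus the paper leaves implicit.
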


\begin{proof}
Let $g$ be as in Lemma \ref{lem_extension_der0_N'}.
Then, for every $\eta\in\bR^d$, $\al\in\bN^d$, $\sigma,\tau\in\bN$, 
there is $q=q(\al,\sigma,\tau)$ such that, for every $\eta\in\bR$,
\begin{equation}\label{gestimates}
|\de^\al g(\eta)|\le C_{\al\sigma\tau}\|g\|_{(q)}\frac{|\eta_d|^\sigma}{(|\eta|+1)^\tau}\ .
\end{equation}

Using Proposition \ref{spherical_N_N'} and Lemma \ref{Sigmas},
$\cG'_0\tilde G$ and $\cG G$ are related by
$$
\cG G(\xi)=\cG'_0\tilde G\Big(\Theta(\xi)\Big)
\quad, \quad \xi \in \Sigma_\cD^{\text{\rm reg}}.
$$
Then the function
$$
h(\xi)=\begin{cases}g\Big(\Theta(\xi)\Big)&\text{ if }\xi_d>0\ ,\\
0&\text{otherwise.}
\end{cases}
$$
coincides with $\cG G$ on $\Sigma_\cD$.

Using \eqref{gestimates}
and the fact that, in all cases, $\eta=\Theta(\xi)$ satisfies $\eta_j=\xi_j \xi_d^{a_j}$ with $a_j\in \{0,-\frac12\}$ , 
it is easy to verify that, for every multi-index $\al$ and every $\sigma,\tau\in\bN$, there is $q'=q'(\al,\sigma,\tau)$ such that
$$
|\de^\al h(\xi)|\le C_{\al\sigma\tau}\|g\|_{(q')}\frac{|\xi_d|^\sigma}{(|\xi|+1)^\tau}\ ,
$$
for $\xi_d>0$. This shows that $h\in\cS(\bR^d)$ and that it depends linearly and continuously on $g$.
By (3) of Lemma~\ref{lem_extension_der0_N'} and the continuity
of the Radon transform,
 we obtain \eqref{h_p_G_q}. 
\end{proof}

We are now ready to prove \eqref{control}.
Given $p\geq 0$, take $q\ge p$ large enough so that  \eqref{h_p_G_q} holds.
By Proposition~\ref{hulanicki},
for any multiplier $m\in \cS(\bR^d)$,
the function $m(\cL,\cM,\Delta)\del_0$ is in $\cS(N)^K$ 
and there exists $r\geq p$ such that
\begin{equation}
  \label{cont_multiplier}
\big\|m(\cL,\cM,\Delta)\del_0\big\|_{(q)}\le C_q\|m\|_{(r)}\ .  
\end{equation}

We observe  now that,
following the same lines as in \cite{ADR2}, Proposition 7.4, 
the function $\psi$ can be chosen so that it satisfies \eqref{psi}
and 
$$
\|\psi\|_{(r)}\le C_r \|F\|_{(s)}\ ,
$$
for some $s\in\bN$.
By \eqref{defG} and \eqref{cont_multiplier},
$$
\|G\|_{(q)} \leq \|F\|_{(q)} + C_q \| \psi\phi\|_{(r)}
\leq C'_q\|F\|_{(q')} 
\ ,
$$ 
with $q'=\max(q,s)$.
By Lemma~\ref{lem_extension_der0_N},
$$
\|h\|_{(p)}\leq C_p\| G\|_{(q)}\leq C'_p \|F\|_{(q')}
\ .
$$

Therefore $u=h+\psi\ph$ is a Schwartz extension of $\cG F$,
and 
$$
\|u\|_{(p)}\leq 
\|h\|_{(p)} +C_p \|\psi\|_{(p)}
\leq 
C"_p\|F\|_{(q')}\ .
$$

This concludes the proof of Theorem \ref{main}.

\vskip1cm

\section{Appendix. Explicit invariants for pairs in Vinberg's list}\label{appendix}


Assume that $(N,K)$ is a nilpotent  Gelfand pair and that $N$ is exactly step-two
i.e., non-commutative. As before, we decompose 
the Lie algebra $\gt n=\Lie N$ as a direct $K$-invariant sum 
$\gt n=\gt v\oplus\gt z$, where $\gt z$ is the centre of $\gt n$.
We assume that 
\begin{enumerate}
\item[(i)] $[\fn,\fn]=\fz$,
\item [(ii)] the action of $K$ on $\fv$ is irreducible, 
\item [(iii)] $\dim\fz>1$. 
\end{enumerate}
Then the pair $(K,\fn)$ is contained
in  Vinberg' table, see \cite[Table 3]{Y2}, which has 12 items.
Our main goal is to describe the algebra 
$\bR[\fn]^K$ of $K$-invariant polynomials on $\fn$.

\vskip1ex

\begin{lemma}\label{compl}
 Let $W$ be a finite-dimensional 
representation of $K$ defined over $\bR$. Let $W(\bC)$, $K(\bC)$ denote the complexifications of $W$ and $K$ respectively. Then
$\bC[W(\bC)]^{K(\bC)}=\bR[W]^K\otimes_{\bR}\bC$.
\end{lemma}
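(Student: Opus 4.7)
The plan is to identify both sides as subspaces of $\bC[W(\bC)]$ and establish mutual inclusion, the key tool being the Zariski density of a compact group $K$ in its complexification $K(\bC)$.

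First I would fix a real basis $e_1,\dots,e_n$ of $W$; it is simultaneously a complex basis of $W(\bC)$, and the dual coordinate functions $x_1,\dots,x_n$ give identifications $\bR[W]=\bR[x_1,\dots,x_n]$ and $\bC[W(\bC)]=\bC[x_1,\dots,x_n]$. Under these identifications, the natural map $\bR[W]\otimes_\bR\bC\to\bC[W(\bC)]$ is an isomorphism of $K$-modules, so both sides of the claimed equality are subspaces of $\bC[W(\bC)]$ and it suffices to check that they coincide.

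For the forward inclusion, I would take $p\in\bR[W]^K$ and fix a vector $v\in W(\bC)$; then $g\mapsto p(g\cdot v)-p(v)$ is a regular function on the complex algebraic group $K(\bC)$ that vanishes on $K$. By the classical fact that a compact real form is Zariski dense in its complexification (Weyl's unitary trick), this regular function vanishes identically, so $p$ is $K(\bC)$-invariant, and extending $\bC$-linearly gives $\bR[W]^K\otimes_\bR\bC\subseteq\bC[W(\bC)]^{K(\bC)}$.

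For the reverse inclusion, given $f\in\bC[W(\bC)]^{K(\bC)}$, I would write $f=f_1+if_2$ uniquely with $f_1,f_2\in\bR[x_1,\dots,x_n]=\bR[W]$. The $K(\bC)$-invariance implies $K$-invariance, and because $K$ preserves the real form $W\subseteq W(\bC)$, for every $k\in K$ and $v\in W$ both $v$ and $k\cdot v$ lie in $W$, so $f_1(v),f_2(v),f_1(k\cdot v),f_2(k\cdot v)\in\bR$. Separating real and imaginary parts in the identity $f(k\cdot v)=f(v)$ yields $f_j(k\cdot v)=f_j(v)$ for $j=1,2$, hence $f_1,f_2\in\bR[W]^K$ and $f$ lies in $\bR[W]^K\otimes_\bR\bC$. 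The only substantive ingredient is the Zariski density of $K$ in $K(\bC)$; the rest is routine bookkeeping with real and imaginary parts, and I do not anticipate a genuine obstacle.
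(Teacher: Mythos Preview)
Your proof is correct and in substance matches the paper's argument: the paper's one-line proof (``the action of $K$ on $W\otimes_{\bR}\bC$ commutes with the complex structure'') is exactly your reverse inclusion via real/imaginary parts, with the equality $\bC[W(\bC)]^{K(\bC)}=\bC[W(\bC)]^{K}$ (your Zariski-density step) taken for granted. You have simply unpacked both ingredients explicitly.
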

\begin{proof}
The statement easily follows from the fact that 
the action of $K$ on $W{\otimes}_{\rl}\cp$  commutes with 
the complex structure.
 \end{proof}

We say that some property holds ``for a generic point", if
it holds for all points of some non-empty open subset.  
Over an algebraically closed field, in our case $\bC$,
we will be using  a simplified version of the Igusa lemma,
see e.g.  \cite[Theorem~4.12]{VP}.  

\begin{lemma}\label{igusa}
Let $G$ be a complex algebraic group
and $V$ a vector space of its finite-dimensional linear 
representation.  Suppose that there are  algebraically independent 
polynomial functions 
$F_1,\ldots, F_m\in \cp[V]^G$ such that 
\begin{itemize}
\item[({\sf i})] \  for generic point 
$y=(y_1,\ldots,y_m)\in\cp^m$, the level subvariety
$\varphi^{-1}(y):=\{x\in V\mid  F_i(x)=y_i\  \text{ for }  i=1,\ldots,m\}$
contains a unique open $G$-orbit;
\item[({\sf ii})] \ the set of points $y\in\cp^m$ such that 
$\varphi^{-1}(y)\ne\varnothing$ contains 
a big open subset $U$, i.e., such that 
${\rm dim} (\cp^m\setminus U)\le m-2$.
\end{itemize}
Then $\bC[V]^G=\bC[F_1,\ldots,F_m]$.
\end{lemma}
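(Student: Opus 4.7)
The plan is to express any $f\in\bC[V]^G$ as a polynomial in $F_1,\dots,F_m$, by first realising $f$ as a rational function of the $F_i$ and then using hypothesis ({\sf ii}) to rule out any denominator.

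First, I consider the morphism $\varphi\colon V\to\bC^m$, $x\mapsto(F_1(x),\dots,F_m(x))$. Algebraic independence of the $F_i$ implies that $\varphi$ is dominant, with generic fibre of dimension $\dim V-m$. Fix $f\in\bC[V]^G$. By ({\sf i}), for $y$ in a non-empty open subset of $\bC^m$ the fibre $\varphi^{-1}(y)$ contains a unique, hence dense, open $G$-orbit on which the $G$-invariant function $f$ takes a single value; call this value $\tilde f(y)$. This defines $\tilde f$ as a regular function on a dense open subset of $\bC^m$, which, since $\bC^m$ is smooth and irreducible, extends uniquely to a rational function on $\bC^m$. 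Write it in lowest terms as $\tilde f=P/Q$ with coprime $P,Q\in\bC[y_1,\dots,y_m]$; clearing denominators gives the identity of regular functions on $V$:
\begin{equation*}
f(x)\cdot Q\!\left(\varphi(x)\right)=P\!\left(\varphi(x)\right).
\end{equation*}

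The main obstacle is to show that $Q$ is a non-zero constant. Suppose not; then $\{Q=0\}\subset\bC^m$ is a hypersurface, of pure codimension $1$. Since $\bC^m\setminus U$ has codimension $\ge 2$ by ({\sf ii}), it cannot contain the whole hypersurface $\{Q=0\}$, so $\{Q=0\}\cap U$ is a non-empty Zariski open, and therefore dense, subset of $\{Q=0\}$. For any $y_0\in\{Q=0\}\cap U$ the fibre $\varphi^{-1}(y_0)$ is non-empty, and evaluating the displayed identity at any $x_0\in\varphi^{-1}(y_0)$ forces $P(y_0)=0$. Thus $P$ vanishes on a dense subset of $\{Q=0\}$, hence on $\{Q=0\}$ itself; by the Nullstellensatz, $Q$ divides $P$, contradicting coprimality.

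Consequently $\tilde f\in\bC[y_1,\dots,y_m]$ and $f=\tilde f(F_1,\dots,F_m)\in\bC[F_1,\dots,F_m]$. Since $f\in\bC[V]^G$ was arbitrary and the reverse inclusion is trivial, combined with algebraic independence of $F_1,\dots,F_m$ this yields $\bC[V]^G=\bC[F_1,\dots,F_m]$, a polynomial ring on $m$ generators, as claimed.
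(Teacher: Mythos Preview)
The paper does not prove this lemma; it cites \cite[Theorem~4.12]{VP} (Igusa's criterion) and remarks that a third hypothesis appearing there is automatic in the present setting. Your argument follows the standard route of that proof and is correct in substance. Two steps, however, are stated more briskly than is safe. First, ``unique, hence dense'' is not a general fact about $G$-actions: one needs that the generic fibre is equidimensional (generic smoothness) together with a dimension count showing that no irreducible component of a generic fibre can lie entirely in the proper closed locus $\{x\in V:\dim G{\cdot}x<\dim V-m\}$; hence every component carries an orbit of maximal dimension, which is then open in the fibre, and uniqueness forces the fibre to be irreducible. Second, going from ``$\tilde f$ is well defined pointwise for generic $y$'' to ``$\tilde f$ is regular on a dense open'' requires justification---either faithfully-flat descent along the smooth surjection $\varphi$ over a suitable open, or, more directly, the observation that ({\sf i}) means the $F_i$ separate generic $G$-orbits, whence Rosenlicht's theorem gives $\bC(V)^G=\bC(F_1,\ldots,F_m)$ outright. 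With either fix your identity $f\cdot Q(\varphi)=P(\varphi)$ is legitimate on all of $V$, and the codimension argument via ({\sf ii}) and the Nullstellensatz is correct as written.
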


\begin{rmk}
Usually conditions of the Igusa lemma 
are stated in terms of a  morphism 
$\varphi:\,V\to\cp^m$ defined by 
$\varphi(v):=(F_1(v),\ldots,F_m(v))$. 
The level subvariety $\varphi^{-1}(y)$ is a fibre of this morphism. 
Condition $\varphi^{-1}(y)\ne\varnothing$ means that $y$ lies in the 
image of $\varphi$. There is also a third condition, 
which is automatically satisfied in our case.  
\end{rmk}

\begin{rmk} 
Suppose we are in the setting of Lemma~\ref{compl}. 
Since all orbits of the compact group $K$ are closed,  
generic $K(\bC)$-orbits on $V=W(\cp)$ are closed as well 
and are separated by the polynomial invariants. 
This means that condition ({\sf i}) of Lemma~\ref{igusa} is satisfied 
if and only if  for generic $y\in\cp^m$ the level subvariety 
$\varphi^{-1}(y)\subset V$   is a $K(\cp)$-orbit.   
\end{rmk}

\begin{definition}  A finitely generated commutative associative algebra 
$A$ is said to be {\it free} if it admits a system of 
algebraically independent generators. Such a system is called 
a {\it free generating set}. 
\end{definition}

Suppose that  
$(N,K)$ is a nilpotent Gelfand pair, where 
$\gt n=\gt v\oplus\rl$ is a Heisenberg Lie algebra. 
Then $V:=\gt v(\cp)$ decomposes as a sum of 
two complex $K(\cp)$-representations, more precisely 
$V=W\oplus W^*$. The group $K(\cp)$ acts on
$\cp[W]$ and this action is {\it multiplicity free}, 
see \cite{BJR90}. We can express this statement  as 
$$
\cp[W]=\bigoplus\limits_{\lambda\in\Lambda(W)} V_\lambda, 
$$ 
where $V_\lambda$ is an irreducible representation with the highest 
weight $\lambda$ and $\Lambda(W)$ is the monoid of all
appearing highest weights. For more details see
\cite{BR00} or \cite{knop-mf}. The polynomial $K(\cp)$-invariants
on $V$ are described as follows 
$\cp[V]^{K(\cp)}=\bigoplus\limits_{\lambda\in\Lambda(W)} 
   (V_\lambda\otimes V_\lambda^*)^{K(\cp)}$,
where each subspace   $(V_\lambda\otimes V_\lambda^*)^{K(\cp)}$ 
is one-dimensional.  It is know that the monoid 
$\Lambda(W)$ is free and its generators are given in
\cite{BR00} or \cite{knop-mf}.  
If $\lambda_1,\lambda_2,\ldots,\lambda_r$ are free generators of 
$\Lambda(W)$, then $F_i:=(V_{\lambda_i}\otimes V_{\lambda_i}^*)^{K(\cp)}$
(with $i=1,2,\ldots,r$) are algebraically independent generators 
of  $\cp[V]^{K(\cp)}$.

Other rich sources of invariants are papers of G.\,Schwarz~\cite{gerry}
 and by  O.M.\,Adamovich and E.O.\,Golovina~\cite{devochki},
 where classification of the  representations of complex simple algebraic groups with 
 free algebras of invariants is carried out. In those papers the generating invariants are 
 described in terms of some representations and their highest weights, 
 which may not be very explicit.  On the other hand,  the degrees
 of generating invariants are also given and this can be enough in many cases 
 in view of the following fact.  
  
\begin{proposition}\label{degrees}
Let $A$ be an algebra of real or complex polynomials  
equipped with an algebraic grade-preserving action of some algebraic group 
$G$. Suppose that $A^G$
is generated by homogeneous algebraically independent 
polynomilas $a_1,\ldots,a_m$ and that
$b_1,\ldots,b_m\in A^G$ are algebraically
independent with $\deg b_i=\deg a_i$. Then 
$A$ is generated by $b_1,\ldots,b_m$.
\end{proposition}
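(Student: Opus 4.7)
The plan is to compare the Poincar\'e (Hilbert) series of the two graded subalgebras $A^G = \bR[a_1,\dots,a_m]$ and $B := \bR[b_1,\dots,b_m] \subseteq A^G$. Since the $G$-action preserves the grading of $A$, the subalgebra $A^G$ is itself graded, and I would first reduce to the case in which the $b_i$ are homogeneous of degree $d_i := \deg a_i$. If some $b_i$ is not homogeneous, one replaces it by its top-degree homogeneous component $b_i^{\mathrm{top}}$: assigning the weight $d_i$ to a formal variable $x_i$, any algebraic relation $P(b_1^{\mathrm{top}},\dots,b_m^{\mathrm{top}})=0$ would, by isolating the part of $P$ of maximal weighted degree, yield a nontrivial algebraic relation among the $b_i$ themselves, contradicting the hypothesis.

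Assuming the $b_i$ homogeneous, I would then invoke the standard Poincar\'e-series computation. Since $a_1,\dots,a_m$ are algebraically independent and homogeneous of degrees $d_i$, one has
$$
H_{A^G}(t) = \prod_{i=1}^m \frac{1}{1-t^{d_i}}\ .
$$
The same formal computation applied to $B$, whose generators $b_1,\dots,b_m$ are homogeneous of the same degrees and algebraically independent, gives
$$
H_B(t) = \prod_{i=1}^m \frac{1}{1-t^{d_i}} = H_{A^G}(t)\ .
$$

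Finally, the inclusion $B \subseteq A^G$ respects the grading, and each graded component of $A$ (hence of $A^G$ and of $B$) is finite-dimensional. Thus $\dim B_n \le \dim (A^G)_n$ for every $n \in \bN$, and the equality of Poincar\'e series upgrades these inequalities to equalities in every degree. Consequently $B = A^G$, as required. The only step with any subtlety is the reduction to homogeneous $b_i$; this is routine, and no genuine obstacle arises, since the core of the argument is a one-line Hilbert-series comparison.
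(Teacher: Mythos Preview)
Your approach is exactly the paper's: compare Poincar\'e series of $B=k[b_1,\dots,b_m]$ and $A^G$, use the containment $B\subseteq A^G$ to conclude $B=A^G$. The paper's proof is just a terse version of your second and third paragraphs.

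There is, however, a genuine gap in your reduction to homogeneous $b_i$. Your claim is that a relation $P(b_1^{\rm top},\dots,b_m^{\rm top})=0$ forces a relation among the $b_i$, but the argument you sketch does not deliver this. Taking $P$ weighted-homogeneous of degree $k_0$, the degree-$k_0$ part of $P(b_1,\dots,b_m)$ is indeed $P(b_1^{\rm top},\dots,b_m^{\rm top})=0$, but that only says $\deg P(b_1,\dots,b_m)<k_0$, not that it vanishes. (In a general ambient ring the implication is simply false: take $b_1=x^2$, $b_2=x^2+y$.) Under the hypotheses of the proposition the reduction \emph{is} valid, but for a different reason: writing $b_i=P_i(a_1,\dots,a_m)$ with $\deg_w P_i=d_i$, each $\partial P_i/\partial x_j$ has weighted degree $\le d_i-d_j$, so the Jacobian $\det(\partial P_i/\partial x_j)$ has weighted degree $\le 0$ and is therefore a constant; this constant equals $\det(\partial P_i^{\rm top}/\partial x_j)$, and it is nonzero because the $b_i$ are algebraically independent. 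Hence the $b_i^{\rm top}$ are algebraically independent too.

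Alternatively, you can bypass the reduction entirely and compare filtrations rather than gradings. The monomials $b^t$ with $\sum t_id_i\le d$ are linearly independent and lie in $B\cap A^G_{\le d}$, while $\dim A^G_{\le d}$ equals the number of such $t$; so $B\cap A^G_{\le d}=A^G_{\le d}$ for every $d$, whence $B=A^G$. This is arguably what both you and the paper are really doing, and it needs no homogeneity of the $b_i$.
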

\begin{proof}
The explanation is that 
$A^G$ and its subalgebra $B$, generated by $b_i$, 
have the same Poincar\'e series. 
To make it more explicit, let us compute the dimension 
of a linear space $A_d^G$ of the $G$-invariant polynomials 
of degree $d$. Since $a_i$ are algebraically independent 
generators, it is equal to 
the number of $m$-tuples 
$(t_1,\ldots,t_m)\in\mathbb Z_{\ge 0}^m$ such that 
$\sum\limits_{i=1}^{m} t_i\deg a_i=d$. 
Note that $\dim B\cap A_d$ is the same.  
\end{proof}

In practice it is not very difficult to verify that some polynomials are 
algebraically independent. Usually we do it by restricting them to
a suitable subspace. 

Trivial $K$-representations appearing in $\gt z$ 
correspond to linear $K$-invariants and they are 
often omitted from 
consideration. Suppose that $\gt w\subset\gt n$ 
is a $K$-invariant subspace. Then
$\rl[\gt n]\cong \cS(\gt n)$ has a $K$-invariant bi-grading
with respect to a $K$-invariant decomposition 
$\gt n=\gt w\oplus\gt m$, i.e., the grading components 
are $S^p(\gt w)S^q(\gt m)$.   
We have 
$\rl[\gt w]^K\subset\rl[\gt n]^K$ and for any 
generating set $\{F_1,\ldots,F_m\}$ in $\rl[\gt n]^K$,
the homogeneous components of the $F_i$'s of the bi-degrees $(\deg F_i,0)$  
form a set of generators in $\rl[\gt w]^K$. 
For that reason we do not 
state the answer for possible central reductions of $\gt n$, i.e., 
for pairs $(K,\gt n/\gt z_0)$, where $\gt z_0\subset\gt z$ is  a $K$-invariant 
subspace.  (Note that the representation of $K$ on $\gt n/\gt z_0$ 
is isomorphic to the representation of $K$ on the $K$-invariant 
complement of $\gt z_0$ in $\gt n$.)  

For a skew-symmetric matrix $x$, let 
${\rm Pf}(x)$ denote the Pfaffian of $x$. 
Having an $n{\times}n$ skew-symmetric matrix $z$ and 
an $n$-vector  $v$, we build a new skew-symmetric 
matrix 
$$
(z|v)=\begin{pmatrix}z&v\\-\trans v&0\end{pmatrix}.
$$
We let $z$ denote an element of $\gt z$ and 
$v$ an element of $\gt v$.  The symbols 
$|v|$ and $|z|$ stand for the norms on $\gt v$ and $\gt z$. 
Set $v^*:=\trans \bar v$, where 
$\bar{\phantom{v}}$ stands either for complex (or 
quaternionic, in cases 7 and 10) conjugation. 
Our notation in case 12 needs an explanation. Here 
$\gt v=\cp^2{\otimes}_{\cp}\qv^n$. Each vector 
$v\in\gt v$ can be presented as $v=w_1{\otimes}v_1+w_2{\otimes}v_2$,
where $w_1,w_2\in\cp^2$ are column vectors and 
$v_1,v_2\in\qv^n$ are row vectors. 
Set $v^*:=\trans\bar w_1{\otimes}\trans\bar v_1+\trans\bar w_2{\otimes}\trans\bar v_2$. Then $vv^*$ can be identified with a $2{\times}2$ quaternionic matrix. 
In this terms  a $K$-invariant real-valued scalar product on $\gt v$
can be written as $\tr(vv^*)$ or as $\tr(v^*v)$.  

The cases are numbered according to the lines of 
Vinberg's table, \cite[Table 3]{Y2}. We only unify items 
$4$ and $5$.

\begin{theorem}\label{generators} 
Generators $F_j\in\rl[\gt n]^K$ can be chosen as follows.\\[0.3ex]
\hbox  to \textwidth{
$
\begin{array}{l}
 K={\rm SO}_n, \gt n=\rl^n\oplus\gt{so}_n, n=2l: \\
F_j: \qquad \qquad  \tr(z^{2k})\ ,\ 1\le k \le l-1\ ,\qquad  {\rm Pf}(z)\ , 
 \qquad \trans v z^{2k}v\ ,\ 0\le k \le l-1\ ; \\
  K={\rm SO}_n, \gt n=\rl^n\oplus\gt{so}_n, n=2l+1: \\
F_j: \qquad \qquad  \tr(z^{2k})\ ,\ 1\le k\le l\ ,\qquad \trans v z^{2k}v\ ,\ 0\le k\le l-1\ ,\qquad
 {\rm Pf}(z|v); \\ 
K={\rm O}_n, \gt n=\rl^n\oplus\gt{so}_n: \\
F_j: \qquad \qquad \tr(z^{2k})\ ,\ 2\le 2k\le n\ ,\qquad\trans v z^{2k}v\ ,\ 0\le 2k \le n-1\ . \\
\end{array}
$
\hfil $(1)$}
\vskip0.3ex
  
\hrule
\vskip0.3ex

\noindent
\hbox  to \textwidth{
$ \begin{array}{l}
K=\Spin_7, \gt n=\rl^8\oplus\rl^7: \\
\qquad \qquad \ F_1=|v|^2, F_2=|z|^2.  \\
\end{array} $
\hfil $(2)$}
\vskip0.3ex

\hrule
\vskip0.3ex

\noindent
\hbox  to \textwidth{
$
\begin{array}{l}
K=G_2, \gt n=\rl^7\oplus\rl^7: \\
\qquad \qquad F_1=|v|^2, F_2=\trans vz, F_3=|z|^2.\\
\end{array} 
$
\hfil $(3)$}
\vskip0.3ex

\hrule
\vskip0.3ex

\noindent
\hbox  to \textwidth{
$
\begin{array}{l}
K={\rm U}_n, \fn=\bC^n\oplus\Lambda^2\bC^n, n\ge 3 \ : \\
F_j: \qquad  \tr((\bar z z)^k)\ ,\  2\le 2k\le n\ ,   \quad
 v^*(z\bar z)^{k}v\ ,\  0\le 2k < n\ ; \\
K={\rm SU}_n, \fn=\bC^n\oplus\Lambda^2\bC^n, n=2l, l>1\ : \\
F_j: \qquad  \tr((\bar z z)^k)\ ,\  1\le k< l\ , \quad   \RE{\rm Pf}(z),\, 
\IM{\rm Pf}(z)\ , \quad
 v^*(z\bar z)^{k}v\ ,\  0\le 2k < n\ ; \\
K={\rm SU}_n, \fn=\bC^n\oplus\Lambda^2\bC^n, n=2l+1, l\ge 1\ : \\
F_j: \enskip  \tr((\bar z z)^k)\ ,\  2\le 2k\le n\ ,   \enskip
 v^*(z\bar z)^{k}v\ ,\  0\le k < l\ , \enskip \RE{\rm Pf}(z|v),\, \IM{\rm Pf}(z|v)\ ; \\
 K={\rm U}_2, \fn=\bC^2\oplus\Lambda^2\bC^2\ : \\
\qquad\qquad  F_1=|v|^2, F_2=|z|^2; \\
K={\rm SU}_2, \fn=\bC^2\oplus\Lambda^2\bC^2,\ : \\
 \qquad\qquad  F_1=|v|^2, F_2=\RE z, F_3=\IM z; \\
\end{array} $
\hfil $(4,5)$}  
\vskip0.3ex

\hrule
\vskip0.3ex

\noindent
\hbox  to \textwidth{
$
\begin{array}{l}
K={\rm U}_n, \fn=\bC^n\oplus\fu_n\ : \\
F_j: \qquad\qquad  i^k\tr(z^k), \ k=1,\ldots,n\ , \qquad
     i^kv^*z^kv, \ k=0,\ldots,n-1\ . \\
\end{array} $
\hfil $(6)$}
\vskip0.3ex

\hrule
\vskip0.3ex

\noindent
\hbox  to \textwidth{
$
\begin{array}{l}
K=({\rm U}_1{\times}){\rm Sp}_n, 
\gt n=\qv^n\oplus(HS^2_0\qv^n\oplus\IM\qv)\ : \\
F_j: \enskip \tr(z^k)\ , \ 2\le k\le n\ , \ v^*z^kv , 1\le k\le n-1\  , 
 \enskip a,b,c \ \text{ or } a, b^2+c^2, \\
\qquad \qquad \text{ depending on } K, \ \text{ where 
$z\in HS^2_0\qv$ and  $\{a,b,c\}$ is a basis of $\ \qv_0^*$.} \\     
\end{array} $
\hfil $(7)$}
\vskip0.3ex

\hrule
\vskip0.3ex

\noindent
\hbox  to \textwidth{
$
\begin{array}{l}
K={\rm U}_1{\times}\Spin_7, \gt n=\bC^8\oplus\bR^7\ : \\
\enskip F_1=|z|^2, \ F_2=|v|^2, \ F_3=\RE\big( z(v_1\bar v_2)\big), \
 F_4=|v_1|^2|v_2|^2-\big(\RE(v_1\bar v_2)\big)^2, \\ 
\text{ where $v=v_1+iv_2$ accordingly 
to a $\Spin_7$-invariant decomposition $\cp^8=\rl^8\oplus\rl^8$,}\\
\text{ each $\bR^8$ is identified with $\bO$ and $\bR^7$ with $\IM\bO$.} \\
\end{array} $
\hfil $(8)$}
\vskip0.3ex

\hrule
\vskip0.3ex

\noindent
\hbox  to \textwidth{
$\begin{array}{l}
K=\Sp_1{\times}\Sp_n, \fn=\bH^n\oplus\fs\fp_1\ : \\
\qquad \qquad F_1=|v|^2, \ F_2=|z|^2. \\
\end{array}$
\hfil $(9)$}
\vskip0.3ex

\hrule
\vskip0.3ex

\noindent
\hbox  to \textwidth{
$
\begin{array}{l}
K=\Sp_2{\times}\Sp_n, \fn=\bH^2\otimes\bH^n\oplus\fs\fp_2, n\ge 2 \ : \\
F_j\ : \qquad \tr(z^2), \ \tr(z^4),  \ |v|^2, \ 
   \tr(zv(zv)^*), \ \tr((zvv^*-vv^*z)^2), \ \tr((vv^*)^2)\ ; \\
K=\Sp_2{\times}\Sp_1, \fn=\qv^2\oplus\gt{sp}_2 \ :\\
\enskip
    F_1=\tr(z^2), \ F_2=\tr(z^4), \ F_3=|v|^2, \ 
    \ F_4=\tr(zv(zv)^*), \ F_5=\tr((zvv^*-vv^*z)^2)\ .\\
\end{array}  $  
\hfil $(10)$}
\vskip0.3ex

\hrule
\vskip0.3ex

\noindent
\hbox  to \textwidth{
$
\begin{array}{l}
K={\rm U}_2{\times} \SU_n, 
\fn=\bC^2\otimes\bC^n\oplus\fu_2, \ \text{ with $n\ge 2$} \   \\
(\text{in case $n\ge 3$, $K$ can be replaced by $\SU_2{\times}\SU_n$})\ :  \\ 
\enskip F_1=i\tr(z), \ F_2=\tr(z^2), \ F_3=|v|^2, \ F_4=\tr((vv^*)^2), \ F_5=iv^*zv. \\
\end{array} $
\hfil $(11)$}
\vskip0.3ex

\hrule
\vskip0.3ex

\noindent
\hbox  to \textwidth{
$
\begin{array}{l}
K={\rm U}_2{\times}\Sp_n, \fn=\bC^2\otimes\bH^n\oplus\fu_2, \ \text{ with $n\ge 2$} \ :\\
\enskip F_1=i\tr(z), \, F_2=\tr(z^2), \, F_3=|v|^2, \, 
F_4=\tr((vv^*)^2), \,  
 F_5(x+iy)=|x|^2|y|^2-(\trans xy)^2,   \\  
\enskip  F_6=\tr(v^*izv)\,, 
 \text{ where $v=x+iy$ accordingly to} \\ 
 \quad \text{an }  \Sp_1{\times}\Sp_n-\text{invariant decomposition} \
  \cp^2\otimes\qv^n=\rl^{4n}{\oplus}\rl^{4n}. \\
\end{array} $
\hfil $(12)$}
 \end{theorem}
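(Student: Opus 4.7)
The plan is to prove each of the twelve cases by a uniform strategy. By Lemma \ref{compl}, it suffices to describe generators of $\bC[\fn(\bC)]^{K(\bC)}$. For each pair, the classification results cited in the text --- Schwarz \cite{gerry}, Adamovich--Golovina \cite{devochki}, Benson--Jenkins--Ratcliff \cite{BJR90}, and Knop \cite{knop-mf} --- guarantee that $\bC[\fn(\bC)]^{K(\bC)}$ is a free algebra and give the degrees of a free generating system. It then suffices to verify that the polynomials $F_j$ exhibited in the theorem are (i) $K$-invariant, (ii) of the correct degrees, and (iii) algebraically independent. Conditions (i) and (ii) are immediate from the defining formulas (traces, Pfaffians, quadratic forms $v^*z^k v$, etc.), so Proposition \ref{degrees} reduces the proof to verifying algebraic independence.

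Algebraic independence will be verified, in each case, by restricting the $F_j$ to a carefully chosen affine slice of $\fn$. The typical slice is obtained by putting the central variable $z$ into a normal form under its residual symmetry group (diagonal or block-diagonal for the orthogonal, unitary and symplectic central components), and then restricting the $\fv$-variable to a transversal of the stabiliser of $z$. On this slice the $z$-only invariants become elementary symmetric or power-sum functions of the eigenvalues of $z$; the $v$-only invariants reduce to simple quadratic forms; and the mixed invariants $v^* z^k v$, $\RE{\rm Pf}(z|v)$, etc., become explicit polynomials from which linear independence of the differentials (and hence algebraic independence) can be read off by a Vandermonde or Jacobian computation. In each case the number of $F_j$'s matches the Krull dimension $\dim\fn-\dim(K/\text{generic stabiliser})$, which gives a consistency check.

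The main obstacle lies in the cases with mixed invariants, namely lines 7, 8, 10, 11 and 12, where some $F_j$ involve both $\fv$- and $\fz$-variables: the $z$-slice and the $v$-slice must be chosen compatibly so that the mixed expressions do not collapse. The small exceptional cases (lines 2, 3, 8, 9) must be handled separately, because the $\Spin_7$/$G_2$ geometry does not fit the uniform ``put $z$ in diagonal form'' pattern; here one either counts the dimension of a generic $K$-orbit directly, or exploits octonionic identities together with the transitivity of $\Spin_7$ on the unit sphere of $\bR^8$ to reduce to a low-dimensional computation. As a uniform alternative, the whole theorem can be obtained from the Igusa lemma (Lemma \ref{igusa}): one checks, case by case, that the listed invariants separate generic complex orbits, and that the image of the invariant map $\varphi$ has complement of codimension at least two. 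This recasts the entire theorem as a sequence of explicit generic-orbit computations, one per case, trading algebraic-independence bookkeeping for geometric orbit analysis.
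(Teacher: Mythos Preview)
Your overall strategy matches the paper for the majority of cases: complexify via Lemma~\ref{compl}, pull the degrees of a free generating set from Schwarz's tables \cite{gerry} or the multiplicity-free machinery \cite{knop-mf,BR00}, and then invoke Proposition~\ref{degrees} so that only algebraic independence remains, checked by restricting $z$ to a diagonal form and reading off a Vandermonde determinant. The paper does exactly this for case~(1), and uses Lemma~\ref{igusa} directly for cases (2), (3), (6), and (9), much as you anticipate.

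The gap is in cases (10), (11), (12). Your assumption that freeness and the degrees are citable from \cite{gerry} or \cite{devochki} fails here: these are representations of products of simple groups, so Schwarz's simple-group tables do not apply, and the actions are not multiplicity free in the Heisenberg sense. The paper does \emph{not} assume freeness a priori in these cases; it proves generation and freeness simultaneously by a two-step reduction you do not mention. One first takes the invariants of the large factor ($\Sp_{2n}(\bC)$ in (10) and (12), $\GL_n(\bC)$ in (11)) using the First Fundamental Theorem of classical invariant theory, obtaining a small space $\langle H_1,\dots,H_r\rangle$ of degree-two generators; one then analyses the action of the remaining small factor ($\Sp_4$, $\SL_2$, $\GL_2$) on this space together with $\fz(\bC)$. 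In case~(10), for instance, this collapses the problem to the $\SO_5$ instance of item~(1), already established. Your slice/Jacobian idea can confirm algebraic independence once the $F_j$ are written down, but it does not by itself show that they \emph{generate}; that is precisely what the two-step reduction supplies, and without it your plan for these three cases is incomplete.
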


\begin{corollary} If $(N,K)$ is a nilpotent Gelfand pair, the  $K$-action  on $\gt v$
is irreducible,  and
$\dim\gt z>1$, then the algebra $\rl[\gt n]^K$ of $K$-invariant
polynomials on $\gt n$  
is free.  
\end{corollary}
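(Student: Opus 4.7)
The plan is to handle the twelve items of Vinberg's list separately, but along a uniform strategy. First, by Lemma \ref{compl} it suffices in each case to exhibit algebraically independent homogeneous generators of $\bC[\fn(\bC)]^{K(\bC)}$. From there, two complementary routes are available. The first is the Igusa route (Lemma \ref{igusa}): produce candidate invariants $F_1,\dots,F_m$, verify algebraic independence, then check that for generic $y$ in the image of $\varphi=(F_1,\dots,F_m)$ the fibre $\varphi^{-1}(y)$ is a single closed $K(\bC)$-orbit, and that the image of $\varphi$ is big (complement of codimension at least two). The second route is degree matching (Proposition \ref{degrees}): use the classification results of Schwarz \cite{gerry} and Adamovich--Golovina \cite{devochki}, which for each irreducible representation of a classical simple group with free invariant algebra list the degrees of a free generating set. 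If one exhibits explicit algebraically independent invariants of the correct degrees, then Proposition \ref{degrees} upgrades them to a generating system.

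For each line of the table I would proceed in three steps. Verification of $K$-invariance of the proposed $F_j$ is either manifest or a direct consequence of the classical invariant theory of $\SO_n$, $\SU_n$, $\Sp_n$, $G_2$, $\Spin_7$ (traces, Pfaffians, norms, and their $v$-twisted analogues). Algebraic independence is checked by restriction to a suitable slice, typically a Cartan-type subspace on which the restriction of the action becomes a Weyl-like finite group; for instance in case (1) the restriction to the standard block-diagonal torus reduces $\tr(z^{2k})$ to the elementary symmetric polynomials in the squared eigenvalues of $z$. Completeness is then established by whichever of the two routes is more economical. For Heisenberg-type items where $\fv$ carries a multiplicity-free $K$-action, one can alternatively appeal directly to the multiplicity-free/highest-weight description recalled before the statement: the free monoid $\Lambda(W)$ of highest weights yields exactly one invariant per generator via the canonical pairing $(V_\lambda\otimes V_\lambda^*)^{K(\bC)}$, and the listed $F_j$'s can be identified with those pairings.

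Special care is needed for lines $8$, $10$, $11$, $12$, where the generating set contains genuine mixed invariants in $v$ and $z$. There the central technical point is to show that the mixed $F_j$ are algebraically independent from the pure $\fv$- and $\fz$-invariants. The cleanest argument is to restrict to a low-dimensional slice on which $z$ is normalised (e.g.\ diagonal, or $\zeta_0$-like as in the preceding section) and then analyse the residual action of the stabiliser of $z$ on $v$; in most cases this reduces the question to a simpler Gelfand pair (often a Heisenberg pair) already covered by route two. The central-reduction remark before the theorem ensures that, once generators are produced in the top case, the formulas descend correctly when a $K$-invariant summand of $\fz$ is removed.

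The main obstacle will be case (10), $K=\Sp_2\times\Sp_n$ on $\bH^2\otimes\bH^n\oplus\sp_2$, and to a lesser extent cases $(7)$, $(8)$ and $(12)$: the quaternionic structure makes generic orbit analysis heavier, and the high-degree mixed invariants $\tr(zv(zv)^*)$ and $\tr((zvv^*-vv^*z)^2)$ are not of the form supplied by any standard pairing. Here I would complexify to $\Sp_2(\bC)\times\Sp_n(\bC)$, put a generic $z\in\sp_2(\bC)$ into diagonal normal form with eigenvalues $\pm\mu_1,\pm\mu_2$, compute the stabiliser of $z$, describe the residual invariants in $v$, and then combine this with Proposition \ref{degrees} using the degrees predicted by \cite{gerry, devochki}. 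Once this case is settled the remaining items follow the same template, with the last few lines ($1$)--($3$) being essentially classical and therefore shortest.
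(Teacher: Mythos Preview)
Your proposal is correct and matches the paper's approach. In the paper this corollary carries no separate proof: it is stated as an immediate consequence of Theorem~\ref{generators}, whose proof is precisely the case-by-case verification you outline---complexification via Lemma~\ref{compl}, then for each item of Vinberg's list either the Igusa-type argument (Lemma~\ref{igusa}), the degree-matching route through Schwarz's tables and Proposition~\ref{degrees}, or the multiplicity-free machinery, with algebraic independence checked by restriction to a Cartan-type slice and the mixed invariants in cases~(8), (10), (11), (12) handled by reducing the action of the stabiliser $K_\zeta$ on $\fv$ to a smaller, already-understood representation.
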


\begin{definition} Suppose that  a group $G$ acts linearly on a 
finite-dimensional vector 
space $V$. A subgroup $H\subset G$ 
is said to be a  {\it generic stabiliser} of this action if there 
is an open subset $U\subset V$ such that 
the stabiliser $G_v$ is conjugate to $H$ for all $v\in U$. 
It is usually denoted by $G_*(V)$. 
\end{definition}

By a deep result of Richardson, generic stabiliser exists for 
all linear actions of reductive 
groups, see e.g.~\cite[Theorem~7.2]{VP}. 
In the following $\zeta$ stands for a generic point 
of $\gt z$ (or $\gt z(\cp)$), $K_\zeta$ for its stabiliser 
in $K$ (up to a connected component), 
and $K_*(\gt n)$ for the generic stabiliser 
of the action of $K$ on $\gt n$. Note that
for the maximal number of algebraically independent 
$K$-invariants, called transcendence degree of 
$\rl[\gt n]^K$ and denoted by 
${\rm tr.deg}\,\rl[\gt n]^K$, we have the equality 
${\rm tr.deg}\,\rl[\gt n]^K=\dim\gt n-\dim K+\dim K_*(\gt n)$. 

Suppose that $\gt c\subset\gt z$ is an analogue of a Cartan subspace, i.e., 
it is a linear subspace containing a generic point 
$\zeta$ such that $K_\zeta$ stabilises also all points of 
$\gt c$ and for a generic point $\zeta'\in\gt c$ the intersection 
$K\zeta'\cap \gt c$ is finite. We do not address here the question of
existence of such $\gt c$. Note only that in the rank one case 
any line $\rl\zeta$ with $\zeta\in\gt z$ being generic satisfies both properties.    
Set $r:=\dim\gt c$. Then $r={\rm tr.deg}\,\rl[\gt z]^K$. 
Let $F_1,\ldots, F_m$ be  a free
generating set in $\rl[\gt n]^K$ with 
$\rl[F_1,\ldots,F_r]=\rl[\gt z]^K$. 

\begin{corollary} Suppose that $(N,K)$ is a nilpotent Gelfand pair 
with an irreducible action of $K$ on $\gt v$,
$\dim\gt z>1$, and $\gt c\subset\gt z$ is a 
linear subspace satisfying both assumptions of the previous paragraph.   
Then $\rl[\gt v\oplus\gt c]^{K_\zeta}=\rl[\tilde F_{r+1},\ldots, \tilde F_{m}]
 \otimes \rl[\gt c]$, where $\tilde F_j$ is the restriction of $F_j$ to 
 $\gt v\oplus\gt c$. In particular, 
 the algebra $\rl[\gt v\oplus\gt c]^{K_\zeta}$ is free and has the same number of generators, 
 $m$, as $\rl[\gt n]^K$. 
\end{corollary}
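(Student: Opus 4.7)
The plan is to apply the Igusa Lemma~\ref{igusa} to the complexified action of $K_\zeta(\bC)$ on $V=(\gt v\oplus\gt c)(\bC)$, taking as the $m$ candidate invariants the coordinate functions $c_1,\ldots,c_r$ on $\gt c$ together with the restrictions $\tilde F_{r+1},\ldots,\tilde F_m$. All of them are $K_\zeta$-invariant: the $c_i$ because $K_\zeta$ fixes $\gt c$ pointwise by the defining property of the Cartan-like subspace, and each $\tilde F_j$ because it is the restriction of a $K$-invariant.

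First I would establish algebraic independence of these $m$ elements in $\bR[\gt v\oplus\gt c]$. Since the generic $K$-orbit in $\gt z$ meets $\gt c$, the set $K\cdot(\gt v\oplus\gt c)$ is dense in $\gt n$, so the restriction map $\bR[\gt n]^K\hookrightarrow \bR[\gt v\oplus\gt c]$ is injective; hence $\tilde F_1,\ldots,\tilde F_m$ are algebraically independent in the target. Next, $\tilde F_1,\ldots,\tilde F_r$ lie in $\bR[\gt c]=\bR[c_1,\ldots,c_r]$ and in fact generate $\bR[\gt c]^W$ for the finite ``Weyl group'' $W=N_K(\gt c)/K_\zeta$, so $\bR[\gt c]$ is integral over $\bR[\tilde F_1,\ldots,\tilde F_r]$. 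A standard transcendence-degree count then shows that replacing the subfamily $\tilde F_1,\ldots,\tilde F_r$ by $c_1,\ldots,c_r$ preserves algebraic independence, producing an algebraically independent $m$-tuple.

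The central step is to check Igusa's condition ({\sf i}): for generic $y=(y_1,\ldots,y_m)\in\bC^m$ the fibre
$$
\{(v,c)\in V : c_i(c)=y_i\ (i\le r),\ \tilde F_j(v,c)=y_j\ (j>r)\}
$$
is a single open $K_\zeta(\bC)$-orbit. The values $y_1,\ldots,y_r$ determine a single point $\hat c\in\gt c(\bC)$, so the fibre lies entirely in $\gt v\oplus\{\hat c\}$ and it coincides with the intersection of the corresponding generic $K(\bC)$-level set in $\gt n$ with $\gt v\oplus\{\hat c\}$. The key observation is that if $k\in K(\bC)$ carries $(v,\hat c)$ into $\gt v\oplus\{\hat c\}$, then $k\hat c=\hat c$, forcing $k\in K_{\hat c}(\bC)=K_\zeta(\bC)$ for $\hat c$ generic. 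Combined with Igusa's lemma applied to the original pair $(K,\gt n)$, which by Theorem~\ref{generators} guarantees that the $K(\bC)$-level set is a single orbit, this exhibits the fibre as a single $K_\zeta(\bC)$-orbit. Condition ({\sf ii}) on the image then follows, because the new Hilbert map differs from the old one only by replacing the first $r$ coordinates (which parameterised $\gt c/W$) with the free coordinates on $\gt c$ — a finite surjective cover in those variables that preserves bigness of complements.

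The Igusa lemma then yields $\bC[V]^{K_\zeta(\bC)}=\bC[c_1,\ldots,c_r,\tilde F_{r+1},\ldots,\tilde F_m]$, and Lemma~\ref{compl} transfers this to the real setting, giving both the displayed tensor-product decomposition and the freeness with $m$ generators. The main obstacle I anticipate is a clean simultaneous handling of the two genericity conditions: we need the open $y$-set on which the original $K(\bC)$-fibre is a single orbit to have nonempty intersection with the open set determined by ``$\hat c$ generic in $\gt c$'', and we need to verify that the resulting fibre on $\gt v\oplus\{\hat c\}$ is not only $K_\zeta(\bC)$-transitive but also of the expected dimension, so that it furnishes the unique open orbit required by Igusa.
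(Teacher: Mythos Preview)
Your route through Igusa's lemma is genuinely different from what the paper does. The paper's argument is much shorter and never invokes Igusa: it uses the known Chevalley-type surjectivity of the restriction $\bR[\fn]^K\to\bR[\fv\times\{\zeta\}]^{K_\zeta}$ (cited from \cite{Y2}) together with a transcendence-degree count. Since $F_1,\dots,F_r$ restrict to constants on $\fv\times\{\zeta\}$, the surjectivity forces the restrictions of $F_{r+1},\dots,F_m$ to $\fv\times\{\zeta\}$ to generate $\bR[\fv]^{K_\zeta}$; the transcendence degree being $m-r$ then gives algebraic independence. Tensoring with $\bR[\fc]$ (trivial $K_\zeta$-action) finishes. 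No verification of fibre structure or image bigness is needed.

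Your argument has a genuine gap at condition~({\sf ii}), and it is not a mere technicality. Take the pair at line~10 ($K=\SO_3$, $\fn=\bR^3\oplus\fs\fo_3$), with $F_1=|z|^2$, $F_2=|v|^2$, $F_3=\trans vz$, and $\fc=\bR e_3$. Your candidate map is $\psi(v,t)=(t,\,|v|^2,\,tv_3)$ on $\fv(\bC)\oplus\fc(\bC)$. Its image misses every point $(0,b,c)$ with $c\ne 0$, so the complement has codimension one and Igusa~({\sf ii}) fails. Correspondingly the conclusion you are aiming for is false in this example: $v_3$ is $K_\zeta$-invariant on $\fv\oplus\fc$, but $v_3\notin\bR[t,\,|v|^2,\,tv_3]$ (specialising $t=0$ shows every element of the latter ring becomes a polynomial in $|v|^2$ alone). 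So the set $\{c_1,\dots,c_r,\tilde F_{r+1},\dots,\tilde F_m\}$ does \emph{not} generate $\bR[\fv\oplus\fc]^{K_\zeta}$ in general.

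What the paper's proof actually delivers is that the restrictions $\hat F_j:=F_j|_{\fv\times\{\zeta\}}$ (functions of $v$ alone) freely generate $\bR[\fv]^{K_\zeta}$; the displayed tensor-product formula should be read with these $\hat F_j$ rather than with the $\tilde F_j$, and then the ``in particular'' clause (freeness with $m$ generators) follows. Your Igusa strategy cannot reach the statement in its literal form because that form is not correct; to recover the intended conclusion you would need to work on $\fv\times\{\zeta\}$ instead of $\fv\oplus\fc$, at which point the paper's direct restriction-plus-dimension argument is both shorter and avoids the surjectivity issues entirely.
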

\begin{proof}
Consider the composition 
$\rl[\gt n]^K\to \rl[\gt v\oplus\gt c]^{K_\zeta} \to \rl[\gt v\times\{\zeta\}]^{K_\zeta}$
of two restriction morphisms. It is known to be surjective, see 
e.g. \cite[proof of Theorem~1.3]{Y2}. The polynomials
$F_1,\ldots,F_r$ are constant on $\gt v\times\{\zeta\}$. 
On the other hand ${\rm tr.deg}\,\rl[\gt v\times\{\zeta\}]^{K_\zeta}=m-r$, 
because a generic $K_\zeta$ orbit on $\gt v$ has codimension 
$m-r$. Hence $\rl[\gt v\times\{\zeta\}]^{K_\zeta}$ is generated 
by the restrictions of $F_j$ with $r<j\le m$ 
and, since the action of $K_\zeta$ on $\gt c$ is trivial, 
$\rl[\gt v\oplus\gt c]^{K_\zeta}=\rl[\tilde F_{r+1},\ldots, \tilde F_{m}]
\otimes \rl[\gt c]$.
\end{proof}

Let $\gt g$ be a complex simple Lie algebra of rank $r$ and 
$\alpha_1,\ldots,\alpha_r$ a set of simple roots with a standard 
numbering \cite[Tables]{VO}. 
We let $\varpi_i$ denote the $i$'th fundamental highest weight 
of $\gt g$ and $R(\varpi_i)$ 
the corresponding irreducible representation.   

\vskip1ex

\noindent
{\it Proof of Theorem~\ref{generators}.} \  We consider each case separately.

\noindent
{\bf 1.} \  The complexification of $\gt n$ is isomorphic to 
$R(\varpi_1){\oplus} R(\varpi_2)$ as a representation of the complex 
group $\SO_n$.
This representation can be found in Table~3a, lines~2 and 5, 
\cite{gerry}.
The description of invariants depends on the parity of $n$.
Suppose first that $n=2l$, i.e., $K(\bC)$ is of type $D$ and we are 
looking at line~2 in \cite[Table~3a]{gerry}.  
The set $F_1,\ldots, F_m$ of Theorem~\ref{generators}(1) has the same 
degrees (and bi-degrees).  Due to Proposition~\ref{degrees},
it only remains to show that the proposed generators are
algebraically independent. 

As is well known, the first $l$ polynomials form a 
generating set in $\rl[\gt{so}_n]^{{\rm SO}_n}$. 
In particular, $F_1,\ldots,F_l$ are algebraically 
independent.  
Assume that there is a non-trivial 
equation $Q(F_1,\ldots,F_m)=0$. Since the first $l$ polynomials
are algebraically independent,  $Q$ depends non-trivially on the 
$F_j$ with $j>l$. Moreover, for generic $\zeta\in\gt z$,
the restriction to $V\times\{\zeta\}$ gives us a non-trivial equation on the 
$\tilde F_j(v)=F_j(v,\zeta)$ with $j>l$. 
Suppose w.l.o.g. that $\zeta$ is a block-diagonal matrix
$$
\diag\left(\begin{pmatrix}0&t_1\\-t_1&0\end{pmatrix},\begin{pmatrix}0&t_2\\-t_2&0\end{pmatrix},\ldots,\begin{pmatrix}0&t_l\\-t_l&0\end{pmatrix}\right)\ ,
$$ 
with pairwise 
distinct $|t_i|$. Then $\tilde F_{l+k+1}=
t_1^{2k}(x_1^2+x_2^2)+t_2^{2k}(x_3^2+x_4^2)+\ldots+ t_{l}^{2k}(x_{n-1}^2+x_n^2)$, 
with $0\le k\le l-1$ and $x_1,\ldots,x_n$ being coordinates on $\gt v$. 
Because all $t_i^2$ are distinct, 
the polynomials $\tilde F_j$ are linear independent, hence,
algebraically independent and $Q$ must have been trivial. 

Suppose now that $n=2l+1$. Then $K(\cp)$ is of type $B$
and we are looking  at  line~2 in \cite[Table~3a]{gerry}.  
The set $F_1,\ldots, F_m$ of Theorem~\ref{generators}(1) has the same 
degrees (and bi-degrees). 
The proof goes as in the case of even $n$, with the 
single difference that  $\tilde F_{l+k+1}$
is equal to 
$t_1^{2k}(x_1^2+x_2^2)+t_2^{2k}(x_3^2+x_4^2)+\ldots+ t_{l}^{2k}(x_{n-2}^2+x_{n-1}^2)$
for $0\le k\le l-1$ (the sums are linearly independent), and 
$\tilde F_m=t_1t_2\ldots t_l x_n$. Clearly, these $l+1$ polynomials 
are algebraically independent.  Hence the polynomials 
$F_1,\ldots,F_m$ are algebraically independent. 
 
In case $K={\rm O}_n$ some modification are needed. 
An additional element  acts on most of the 
${\rm SO}_n$-invariants trivially. The only exceptions are 
Pfaffians, which must be replaced by the determinants, or, equivalently, 
by $\tr(z^n)$ in case $n$ is even and by $\trans vz^{n-1}v$ in case $n$ is odd.  

\medskip

\noindent
{\bf 2.} \ This is a case of the so-called double transitivity, 
see \cite{Ko}, the group $K$ acts transitively on each product of 
two spheres, one  in $\gt v$ and one in $\gt z$.  We  can also apply 
Lemma~\ref{igusa}. For non-zero $a,b\in\cp$, the level subvariety 
$X_{a,b}=\{x\in\gt n(\cp) | F_1(x)=a, F_2(x)=b\}$ is a single $K(\cp)$-orbit, 
because the stabiliser $K(\cp)_v=G_2$ of a generic 
$v\in\cp^8$ acts transitively on complex spheres in $\cp^7$. 
It is also quite clear that $X_{a,b}\ne\varnothing$ for 
all $a,b\in\cp^2$. 


\medskip

\noindent 
{\bf 3.} \ In this case we use Lemma~\ref{igusa}. 
Fix a non-zero value of $F_3$, let us say $F_3(\zeta)$, 
$\zeta\in\gt z(\cp)$.
The complex group $G_2$ acts transitively on the complex 
sphere $F_3(z)=F_3(\zeta)$ in $\fz(\bC)$.  Therefore 
condition ({\sf i}) of Lemma~\ref{igusa} is fulfilled if and only if 
the stabiliser $K(\bC)_\zeta=\SL_3$ acts transitively on 
$X_\zeta:=\{v+\zeta\mid F_1(v)=a, F_2(v+\zeta)=b\}$ with $a,b\in\cp$. 
As a representation of $\SL_3$ we can decompose $\gt v(\cp)=\cp\oplus\cp^6$, 
where $\cp^6$ is the orthogonal complement to $\zeta$. Now
$X_\zeta=\{w\}\times\{v\in\bC^6\mid F_1(v)=a-F_1(w)\}$,
where $\trans w\zeta=b$. Such vector $w$ is unique. 
The stabiliser, $\SL_3$ acts on $\bC w$ trivially, but in the 
orthogonal complement $\bC^6=\cp^3{\oplus}(\cp^3)^*$ transitively on 
complex spheres $F_1(v)=c$, with $c\ne 0$. 
For a generic triple $y=(a,b,F_3(\zeta))\in\cp^3$, where 
$a\ne F_1(w)$, the group $\SL_3$ acts on  $X_\zeta$
transitively, hence,  $\varphi^{-1}(y)$ is a single $K(\bC)$-orbits.
Clearly $\varphi^{-1}(y)$ is non-empty for all 
$y\in\cp^3$. Thereby condition ({\sf ii}) is satisfied as well. 

\medskip 

\noindent 
{\bf 4,5.} \  At first assume that $K={\rm U}_n$ and $n\ge 4$. 
Here we use the general theory developed for the multiplicity free 
actions, see beginning  of this section. 
The space $\gt n$ carries a $K$-invariant complex 
structure and the action of $K$ on $\cp[\gt n]$ is multiplicity free,
i.e., each irreducible representation appears only once
$\cp[\gt n]=\bigoplus\limits_{\lambda\in\Lambda(\gt n)} V_\lambda$.
The highest weights semigroup $\Lambda(\gt n)$ is 
a free monoid, generated by weights $\lambda_1,\ldots,\lambda_m$.
The algebra $\cp[\gt n(\cp)]^{K(\cp)}$ is freely generated 
by the invariants sitting in $V_{\lambda_i}\otimes V_{\lambda_i}^*$.
Descriptions of 
$\Lambda(\gt n)$ can be found in \cite{knop-mf}
or \cite{BR00}. 
In our case $m=n$ and this description leads to the set of generators 
given in Theorem~\ref{generators}(4,5). Another possible approach could 
be to take only the degrees of generating invariants from \cite{knop-mf} 
and then prove that the $F_j$'s are algebraically independent. 

The case $K=\SU_n$ has one additional complication. 
There are non-constant $K$-invariant complex polynomials on 
the complex space $\gt n$, e.g., for even 
$n$, the determinant of $z$, for odd $n$,
the determinant of a skew-symmetric $(n{+}1){\times}(n{+}1)$
matrix $(z|v)$. 
This does not spoil the situation very much, since 
the algebra $\bR[\fn]^K$ remains free according 
to \cite[Table 1a, line 11]{gerry}. Over real 
numbers the additional invariants are the real and 
imaginary parts of ${\rm Pf}(z)$ or ${\rm Pf}(z|v)$, depending 
on the parity of $n$.  Their product, which is 
either $\tr((\bar z z)^l)$ or $v^*(z\bar z)^lv$,
should be removed from the set of generators.
 
Consider now small $n$.  
For $n=1$, the situation is trivial:
$\gt n$ is commutative or a Heisenberg Lie algebra.
These two cases are omitted from consideration. 
For $n=2$, the centre $\gt z$ is $\cp$ and 
$\rl[\gt n]^K$ is either $\rl[|v|^2,|z|^2]$ or 
$\rl[|v|^2,\gt z]$. 

The last case is $n=3$.  Here the 
invariants are the same as for $n\ge 4$. 
There is only one simplification, if
$K=\SU_3$, $\gt n$ can be identified with 
$\cp^3\oplus\cp^3$ and ${\rm Pf}(z|v)$ is proportional  to 
$\trans vz$. Thus the set of generating invariants is
$|v|^2,|z|^2$, ${\rm Im}(\trans vz)$,
${\rm Re}(\trans vz)$.    

\medskip

\noindent 
{\bf 6.} \ The first $n$ invariants form a generating 
set in $\rl[\gt{u}_n]^{{\rm U}_n}$ or, after complexification,  
in $\cp[\gt{gl}_n]^{\GL_n}$. Several other facts about these 
polynomials are well known. 
For example, for generic $s\in\cp^n$ the level subvariety  
$\{\xi\in\gt{gl}_n \mid F_i(\xi)=s_i, i=1,\ldots,n\}$ is a single 
$\GL_n$-orbit. In order to show that the proposed invariants  
$F_1,\ldots,F_m$, with $m=2n$, generate 
$\rl[\gt n]^K$ we check two conditions of 
Lemma~\ref{igusa}.  

Let $t\in\fg\fl_n$ be a regular semisimple 
element, i.e., a diagonal matrix with pairwise distinct 
entries $(t_1,\ldots,t_n)$. Set $T=(\GL_n)_t$.
Then $T=(\cp^{^\times}\!)^n$ is a maximal torus 
in $\GL_n$.  We have $\gt v(\cp)=\cp^n\oplus(\cp^n)^*$.
Over $\cp$ the polynomials $v^*z^kv$ are proportional to  
$y(z^kx)$, where $y\in(\cp^n)^*$, $x\in\cp^n$. 
On 
$V(\bC)\times\{t\}$ the polynomials $F_{n+1},\dots,F_{m}$  
reduce to $\sum\limits_{i=1}^n t_i^k x_iy_i$ 
with $k=0,\ldots,n-1$, where 
$x_i$ and $y_i$ are suitable coordinates on $\cp^n$ and $(\cp^n)^*$.
Since the $t_i$ are pairwise 
distinct, the Vandermonde determinant is non-zero 
and the sums (restrictions of invariants) 
span the linear space generated by $x_iy_i$.
Therefore,  the subvariety defined by  $x_iy_i=c_i\ne 0$, 
is  a single $T$-orbit. Hence, for 
$c=(c_1,\ldots,c_m)\in\cp^m$ 
with $c_i=F_i(t)$ for $1\le i\le n$ and generic 
$c_j$ with $j>n$, the level subvariety 
$\varphi^{-1}(c)=\{v\in\gt v(\cp) \mid F_i(v)=c_i\}$, 
is a single $\GL_n$-orbit.  
Thus condition $({\sf i})$ of Lemma~\ref{igusa} is satisfied. 

Each point in $\cp^n$ 
can be presented as $(\tr(\xi),\tr(\xi^2),\ldots,\tr(\xi^n))$, 
where $\xi\in\gt{gl}_n$ is a regular element, i.e., such that 
$\dim(\GL_n)_\xi=n$. 
The invariants $F_i$ with $i>n$ on $\gt v(\cp)\times\{\xi\}$
reduce to the pairings $(\cp^n)^*\times\cp^n\to\cp$ given
by the matrices ${\rm I}_n$, $\xi$, $\xi^2,\ldots,\xi^{n-1}$. 
Since the elements ${\rm I}_n,\xi,\ldots \xi^{n-1}$ are
linear independent (that is the main property of 
a regular element), we get $n$ linear independent 
pairings.  Hence 
for every $c\in\cp^n$ the set 
$\{v\in\gt v(\cp)\mid F_j(v+\xi)=c_{j-n}, n+1\le j\le 2n\}$
is non-empty. Therefore the set 
$\varphi^{-1}(c)$ is non-empty for all $c\in\cp^m$ and 
condition $({\sf ii})$ of Lemma~\ref{igusa} is satisfied as well.
   
\medskip

\noindent 
{\bf 7.} \ Let $z$ be an element in $HS^2_0\qv^n$ 
and $a,b,c$ coordinates on $\bH_0$ with $a$ being ${\rm U}_1$-invariant. 
First we look on $\Sp_n$-invariants in 
$\rl[\qv^n{\oplus}HS^2_0\qv^n]$.
The complex group $\Sp_{2n}(\cp)$, which is the complexification of 
$\Sp_n$, acts on $\gt n(\cp)$ via 
$R(\varpi_1)\oplus R(\varpi_2)$. This sum appears as a 
subrepresentation in
\cite[Table~4a, item~2]{gerry} and the invariants are described 
in lines 1,\,2 of \cite[Table~4b]{gerry}. The representation  
$R(\varpi_2)$ is isomorphic to a $\gt g_0$ action on a subspace 
$\gt g_1$ arising from the 
symmetric decomposition ($\mathbb Z_2$-grading)
$\gt g=\gt g_0\oplus\gt g_1$ with $\gt g=\gt{sl}_{2n}$,
$\gt g_0=\gt{sp}_{2n}(\cp)$. The generating $\Sp_{2n}(\cp)$-invariants on 
$\gt g_1$ are
known to be the traces of powers, $\tr(z^k)$, with $k=2,\ldots,n$. 
On $\qv^n$ there is only one generator, 
$|v|^2$. The ``mixed" case is dealt with in 
item~2 of \cite[Table~4b]{gerry}. 
The arising invariants have degrees $2$ in $v$ and 
$1,\ldots,n-1$ in $z$.
We have to show that 
$v^*z^kv$ is not proportional to $|v|^2\tr(z^k)$
for $k=2,\ldots,n-1$. Indeed 
if $\zeta\in HS^2_0\qv^n$ is a diagonal matrix $(t_1,\ldots,t_n)$ 
with pairwise distinct $t_i\in\rl$ and $\sum_{i=1}^{n}t_i=0$, 
then $v^*\zeta^kv=\sum_{i=1}^{n} t_i^k|v_i|^2$ and the sum is not 
proportional to 
$|v|^2\tr(\zeta^k)=(\sum_{i=1}^{n}t_i^k)(\sum_{i=1}|v_i|^2)$.   

The groups $\Sp_n$ and ${\rm U}_1{\times}\Sp_n$ have 
the same orbits on $\bH^n\oplus HS_0^2\qv^n$ 
and hence the same invariants. In case $K=\Sp_n$ we add 
to the above polynomials all the coordinates $a,b,c$, and  
if $K={\rm U}_1{\times}\Sp_n$, then $a$ and $b^2+c^2$. 

\medskip

\noindent 
{\bf 8.} \  Here $\gt n(\cp)$  
decomposes as $\cp^7\oplus\cp^8\oplus\cp^8$. 
As a representation of $\Spin_7$ it is 
$R(\varpi_1)\oplus 2R(\varpi_3)$ 
and is a subrepresentation of both items 10 and 11 of \cite[Table~3a]{gerry}.
In particular, the algebra of $\Spin_7$-invariants is free. 
It is not difficult to tell which invariants live 
on our subrepresentation. There are five generators.
Identifying $\cp[\gt n(\cp)]$ and $S(\gt n(\cp))$, we can say that  
$F_1\in S^2(\cp^7)$, $F_4'$, $F_5'$ sit in two 
different copies of 
$S^2(\cp^8)$ (one in each), $F_2\in\cp^8{\otimes}\cp^8$,
and  $F_3\in\cp^7\otimes\cp^8{\otimes}\cp^8$.  
The non-trivial action of $\cp^*$
replaces $F_4'$ and $F_5'$ by $F_4=F_4'F_5'$. 

In order to express $F_3$ and $F_4$ over $\rl$, we 
decompose $\cp^8=\rl^8\oplus\rl^8$ as a sum of two 
real $\Spin_7$-invariant subspaces and identify each of them as the space $\bO$ of octonions. We also identify $\fz=\bR^7$ with $\IM\bO$. Then we write  
each $v\in\gt v$ as an octonionic vector $\begin{pmatrix}v_1\\v_2\end{pmatrix}$,
with ${\rm U}_1$ acting by ${\SO}_2$-matrices.
Now 
$F_3(v,z)=\RE\big( z(v_1\bar v_2)\big)$ and 
$F_4(v)=|v_1|^2|v_2|^2-\big(\RE(v_1\bar v_2)\big)^2$.

\medskip  
  
\noindent 
{\bf 9.} \  The group $\Sp_n$ acts transitively on the spheres in 
$\qv^n=\cp^{2n}$. Thus $\Sp_n$ invariants in this 
case are generated by $|v|^2$ 
and linear functions on $\IM\qv$. Clearly 
$|v|^2$ is also $\Sp_1$-invariant. 
Hence $\rl[\gt n]^K=\rl[|v|^2,|z|^2]$. 
This is another case of the double transitivity \cite{Ko}.

\medskip

\noindent 
{\bf 10.} \ First of all, $K_*(\gt n)$ is trivial in case $n=1$ and 
$K_*(\gt n)=\Sp_{n-2}$ for $n>1$;
$K_\zeta={\rm U}_1{\times}{\rm U}_1{\times}\Sp_n$. 
Therefore the  codimension of a generic 
$K$-orbit is equal to $8n+10-(8n+4)=6$ in case 
$n>1$ and to $5$, when $n=1$. 
On $\gt z$ there are two generating invariants, $F_1=\tr(z^2)$, $F_2=\tr(z^4)$.
The representation of $K$ on $\gt v$ 
is a real form of $\gt g_1$ arising from the symmetric pair 
$(\gt g,\gt g_0)=(\gt{sp}_{2n+4},\gt{sp}_{2n}{\oplus}\gt{sp}_4)$,
which is of rank $2$, if $n>1$, and of rank $1$, if $n=1$.
There are two generators: $F_3=|v|^2$ and $F_6$.
Over $\cp$, the polynomial $F_6$ is 
the restriction to $\gt g_1$ of $\tr(A^4)$,
where $A$ is a matrix in $\gt{sp}_{2n+4}$.  Over $\rl$, 
and only in case $n>1$, we write $F_6$ as  
$\tr((vv^*)^2)$, where $v$ is identified with a $2{\times}n$ 
quaternionic matrix.

For the description of the mixed invariants $F_4,F_5$ some other 
arguments are needed.
Note that $\gt v(\cp)=\cp^4\otimes\cp^{2n}$ and 
as a representation of the complex group $\Sp_{2n}$ 
it is a sum of four copies of the defining representation 
$\cp^{2n}$. According to the First Fundamental Theorem of the 
so called ``classical invariant theory", cf. \cite[Subsection~9.3]{VP} , 
the algebra $\cp[\gt v(\cp)]^{\Sp_{2n}}$ has six generators 
$H_1,\ldots H_6$, all of them of degree $2$. These generators 
correspond to non-degenerate pairings between different copies of $\cp^{2n}$.  
The second (complex) group $\Sp_4$ acts on the linear space 
$\left<H_1,\ldots,H_6\right>_\cp$ generated by 
the $H_i$. In order to understand this action, we compute
$S^2(\cp^4\otimes\cp^{2n})=S^2\cp^4\otimes S^2\cp^{2n}\oplus
\Lambda^2\cp^4\otimes\Lambda^2\cp^{2n}$. 
The invariants $H_i$ sit in $\Lambda^2\cp^{2n}$. 
Hence the action of $\Sp_4$ on $\left<H_1,\ldots,H_6\right>_\cp$ 
decomposes as a sum 
$\cp\oplus\cp^5$ of the trivial and the irreducible $5$-dimensional representation
$R(\varpi_2)$.  The trivial representation corresponds to 
the $K$-invariant $F_3=|v|^2$. 

Now we are in the situation of item~(1)  with 
the action of $\SO_5$ on $\rl^5\oplus\gt{so}_5$. 
There are six generating invariants. 
In case $n>1$ they  must be algebraically independent. 
In case $n=1$, the second invariant, of degree $2$, on $\rl^5$
is proportional to $F_3^2$. The other five are algebraically independent. 
The mixed invariants are of 
bi-degrees  $(1,2)$, $(2,2)$ in $H_i$ and $z$ respectively. 

Over $\rl$ the subspace generated by the $H_i$ can be identified 
with a hermitian $2{\times}2$-matrix $vv^*$. 
Then the  $\Sp_2$-invariant norm is defined
as $|vv^*|^2=\tr((vv^*)^2)$. The action of $\gt{sp}_2$ is 
$z{\cdot}vv^*=zvv^*-vv^*z$. Therefore, the invariant 
of be-degree $(2,2)$, which is $|z{\cdot}h|^2$ 
($h\in\left<H_1,\ldots,H_6\right>_\rl$), should be written 
as $\tr((zvv^*-vv^*z)^2)$. 
It only remains to show that  $F_4=\tr(zv(zv)^*)$ is not proportional to $F_1F_3$.


Chose a generic $\zeta\in\gt z$. We may assume that 
it is given by a pair $(it_1,it_2)$ of quaternions  with 
$t_1,t_2\in\rl^{^\times}$ and $|t_1|\ne |t_2|$. Under the action of the stabiliser 
$K_\zeta=\Sp_1{\times}\Sp_1{\times}\Sp_n$ the subspace 
$\gt v$ decomposes as $\gt v=\qv^n\oplus\qv^n$. 
Let us split  $v$ accordingly to this decomposition $v=v_1+v_2$. 
Here both $v_1$ and $v_2$ are rows.  
Then the restrictions of $F_1\,,F_3$, and $F_4$ 
to  $\gt v\times\{\zeta\}$ are proportional to 
$$
t_1^2+t_2^2, \  
|v_1|^2+|v_2|^2, \  
t_1^2|v_1|^2+t_2^2|v_2|^2. 
$$  
Since  $|t_1|$ and $|t_2|$ are distinct, 
$t_1^2|v_1|^2+t_2^2|v_2|^2$ is not proportional
to $(t_1^2+t_2^2)(|v_1|^2+|v_2|^2)$. 

\medskip

\noindent  
{\bf 11.} \  Here $K_\zeta=({\rm U}_1{\times}){\rm U}_1{\times}\SU_n$, 
$K_*(\gt n)=({\rm U}_1{\times})\SU_{n-2}$ for $n>2$ and 
$K_*(\gt n)$ is trivial for $n=2$.
Note that the (additional) torus lies in a
generic stabiliser, hence, the invariants 
do not differ. Thus we may (and will) assume that 
$K$ is not semisimple. (In case $n=2$ the torus 
is necessary.) 
 
The codimension of a generic orbit is $5$. On 
$\gt z$ there are two generating $K$-invariants: $F_1=i\tr(z)$ 
and $F_2=\tr(z^2)$. The subspace $\gt v$
is a real form of $\gt g_1$ arising from a symmetric decomposition 
$\gt g=\gt g_0\oplus\gt g_1$ with 
$(\gt g,\gt g_0)=(\gt{sl}_{n+2},\gt{sl}_n{\oplus}\gt{sl}_2)$.
Here there are two generators $F_3=|v|^2$ 
and $\det(v \bar v^t)$ or, equivalently, $F_4=\tr((vv^*)^2)$.  Thus
$\rl[\gt z]^K=\rl[F_1,F_2]$ and $\rl[\gt v]^K=\rl[F_3,F_4]$. 
In order to describe the ``mixed" generators, we explore the same method 
as in item~10.  

We have $\gt v(\cp)=\cp^2{\otimes}\cp^n\oplus(\cp^2{\otimes}\cp^n)^*$ 
and $\cp[\gt v(\cp)]^{\GL_n}=\cp[H_1,H_2,H_3,H_4]$ with all
four generators being of degree $2$.  On the subspace 
$\left<H_1,\ldots,H_4\right>_\cp$ 
the group  $\SL_2$ acts as on $\cp\oplus\gt{sl}_2$. 
The trivial subrepresentation corresponds to $F_3$.
On the other hand, $F_1$ comes from the trivial 
subrepresentation of $K$ in $\gt z$. Apart from these two
generators,
we have an action of $\SL_2$ on $\fs\fl_2\oplus\fs\fl_2$ or
of $\SO_3$ on $\bC^3\oplus\bC^3$. This gives rise to 
three invariants: one on the first copy of $\bC^3$, 
another on the second copy, 
and one ``mixed", $F_5'$, of bi-degree $(2,1)$ in $(\gt v,\gt z)$.
Thereby $\bR[\fn]^K$ is generated by 
$\bR[V]^K$, $\bR[\fz]^K$, and $F_5'$.
The subspace $(S^2(V(\bC))\otimes \fz(\bC))^{K(\bC)}$ 
is two-dimensional and has a basis $\{F_5',F_1F_3\}$. 
In order to complete the description, 
it is sufficient to show that $F_1F_3$ and $F_5$ are 
linear independent. Indeed, if $\tr(z)=0$, then 
$F_1F_3=0$, but $F_5$ is not.
 
\medskip

\noindent 
{\bf 12.} \ Here $K_\zeta={\rm U}_1{\times}{\rm U}_1{\times}\Sp_n$, 
$K_*(\gt n)=\Sp_{n-2}$, and the codimension of a generic orbit is $6$. 
This case is very similar to case~10. 
The complexification $\gt v(\cp)=(\cp^2{\oplus}(\cp^2)^*)\otimes\cp^{2n}$
is the same as for $\gt v$ in item~10 and the $K(\cp)$-representation 
on $\gt v(\cp)$ is obtained by the restriction of the defining representation 
of $\Sp_4(\cp)$ to a symmetric subgroup $\GL_2$.  
Again $\cp[\gt v(\cp)]^{\Sp_{2n}}=\cp[H_1,\ldots,H_6]$,
where all generators are of degree $2$, 
and $\GL_2$ acts on $\left<H_1,\ldots,H_6\right>_\cp$ as on
$\Lambda^2(\cp^2{\oplus}(\cp^2)^*)$, i.e., as on 
$\cp\oplus(\cp{\oplus}\cp^*)\oplus \gt{sl}_2$.
On  the first summand $\cp$ the action is trivial and it 
corresponds to $F_3=|v|^2$. On $\gt z$ there are two 
generators: $F_1=\tr(z), F_2=\tr(z^2)$.
Apart from this there are two invariants of degree 
$4$ on $\gt v$, one comes from $\cp\oplus\cp^*$, 
another from $\gt{sl}_2$; plus an invariant $F_6$
of bi-degree $(2,1)$ in $(\gt v,\gt z)$, which corresponds 
to the pairing between $\gt{sl}_2\subset \left<H_1,\ldots,H_6\right>_\cp$
and $\gt{sl}_2\subset\gt z(\cp)$. There are at most $6$ generators. Since 
we should have at least $6$ algebraically independent ones, they are 
algebraically independent.  As $F_6$ we take $(zv,v)$,
where $(\,\,,\,)$ is a $K$-invariant scalar product on $\gt v$.
Clearly $F_6$ is non-zero on $\gt v\times\gt{sl}_2$ and hence 
linear independent with $F_1F_3$. The polynomial can be 
expressed as $F_6=\tr(v^*izv)$. 
 
To complete the proof we need to show that 
$\rl[\gt v]^K=\rl[F_3,F_4,F_5]$. 
Recall that  $F_4=\tr((vv^*)^2)$. 
This is an invariant of $K$ and,
taking into account an isomorphism 
$\cp^2\otimes_{\cp}\qv^n\cong \qv^2\otimes_{\qv}\bH^n$, one
concludes that it is also $\Sp_2{\times}\Sp_n$-invariant.  
 
On the other hand, 
$\cp^2\otimes\qv^n\cong \cp\otimes_\rl \qv^n$ as a 
representation of ${\rm U}_1\times(\Sp_1{\times} \Sp_n)=K$.
One may say that $\gt v$ is a sum of 
$\qv^n=\rl^{4n}$ and $i\rl^{4n}$, where  
${\rm U}_1$ acts via multiplications by $\cos\gamma + i\sin\gamma$. 
Such situation appeared in case 8. This 
decomposition leads to the construction of $F_5$. 
Since the degrees of generating invariants have already been found out, 
it remains to show that $F_3,F_4,F_5$ are algebraically independent 
or  that $|v|^4,F_4,F_5$ are linearly independent.
If they were dependent, then the complex groups  
$\Sp_4\times \Sp_{2n}$ and $\bC^{^\times}\times \SO_{4n}$ 
would have had the same invariants on 
$\cp^4\otimes\cp^{2n}\cong (\cp{\oplus}\cp^*)\otimes\cp^{4n}$. 
But a subalgebra of $\gt{so}_{8n}(\bC)$, generated by the
Lie algebras of these  
two subgroups, coincides with $\gt{so}_{8n}(\cp)$ itself 
and neither $F_4$ nor $F_5$ is $\gt{so}_{8n}(\cp)$-invariant. 
Therefore $F_3$, $F_4$, and $F_5$ are algebraically independent 
and this completes the proof.    \hfill{$\Box$}

\begin{rmk} 
For the purposes of Section~4 we need a slightly modified
set of generators in case~(12). 
As was already mentioned, $\gt v$ can be viewed as 
$\rl^2\otimes_{\rl}\qv^n$ and $K$ as $\SO_2{\times}\Sp_1{\times}\Sp_n$.
Also $\gt z$ decomposes as $\gt{sp}_1\oplus\rl$. 
Now $F_1=t$, where $t$ is a coordinate on $\rl\subset\gt z$ and
the new $F_2=|z_0|^2$ with $z_0\in\gt{sp}_1$. The next generator, 
$F_3$, we leave as it is. 
We can present $v\in\gt v$ as a $2{\times}n$ quaternionic matrix, 
$$
v=\left(\begin{array}{l}
   v_1 \\
   v_2 \\
   \end{array}\right),
$$
where $v_1,v_2\in\qv^n$ are row-vectors. 
It can be assumed that $\trans v_1,\trans v_2$
are the same as $x,y$ in Theorem~\ref{generators}.  
In this terms 
$F_4=|v_1|^4 +2|v_1v_2^*|^2 + |v_2|^4$.
There is a unique (up to a scalar) $\Sp_1{\times}\Sp_n$-invariant 
pairing between $\qv^n$ and $\qv^n$. In terms of
real column-vectors $x,y$ it was expressed as $\trans x y$.
In terms of quaternionic row vectors $v_1,v_2$ it becomes 
exactly  $\RE(v_1v_2^*)$. Hence 
$F_5 =|v_1|^2|v_2|^2-\big(\RE(v_1v_2^*)\big)^2$. 
For the new generator $F_6$ we take a 
$K$-invariant polynomial $\RE((v_1v_2^*-v_2v_1^*)z_0)$
(here again $z_0\in\gt{sp}_1$).
As well as the old $F_6$ of Theorem~\ref{generators},
$\RE((v_1v_2^*-v_2v_1^*)z_0)$ is non-zero on $\gt v\times\gt{sp}_1$ and 
hence linearly independent with $F_1F_3$. 
\end{rmk}

\newpage



\vskip1cm

\end{document}